\newif\ifpdf
\newtheorem{lemma}{Lemma}[section]
\newtheorem{theorem}[lemma]{Theorem}
\newtheorem{corollary}[lemma]{Corollary}
\newtheorem{definition}[lemma]{Definition}
\newtheorem{remark}[lemma]{Remark}
\newtheorem{remarks}[lemma]{Remarks}
\newtheorem{proposition}[lemma]{Proposition}
\newtheorem{assumption}[lemma]{Assumption}
\newtheorem{example}[lemma]{Example}
\newtheorem{theorem*}[]{Theorem}
\def\N{\mathbb{N}}
\def\C{\mathcal{C}}
\def\R{\mathbb{R}}
\def\Aut{{\rm Aut}}
\def\X{\mathcal{X}}
\def\loc{{\rm orb}}
\def\orb{{\rm orb}}
\def\dom{{\rm dom}}
\def\b{\bar{b}}
\def\cl{{\rm cl}}
\def\h{\tilde{h}}
\def\g{\tilde{g}}
\def\S{\mathcal{S}}
\def\F{\mathcal{F}}
\def\id{{\rm id}}
\def\m{{\rm mult}}
\def\dim{{\rm dim}}
\def\K{{\mathcal{K}}}
\def\acl{{\rm acl}}
\def\Sym{{\rm Sym}}
\def\GL{{\rm GL}}
\def\Z{\mathbb{Z}}
\providecommand{\floor}[1]{\lfloor#1\rfloor}
\def\Ind#1#2{#1\setbox0=\hbox{$#1x$}\kern\wd0\hbox to 0pt{\hss$#1\mid$\hss}
\lower.9\ht0\hbox to 0pt{\hss$#1\smile$\hss}\kern\wd0}
\def\Notind#1#2{#1\setbox0=\hbox{$#1x$}\kern\wd0\hbox to 0pt{\mathchardef
\nn="3236\hss$#1\nn$\kern1.4\wd0\hss}\hbox to 0pt{\hss$#1\mid$\hss}\lower.9\ht0
\hbox to 0pt{\hss$#1\smile$\hss}\kern\wd0}
\def\ind{\mathop{\mathpalette\Ind{}}}
\def\nind{\mathop{\mathpalette\Notind{}}}
\begin{document}

\title[]{Simplicity of the automorphism groups of some Hrushovski constructions}

\authors{

\author{David M. Evans}

\address{%
Department of Mathematics\\
Imperial College London\\
London SW7~2AZ\\
UK.}

\email{david.evans@imperial.ac.uk}

%

\author{Zaniar Ghadernezhad}

\address{%
School of Mathematics\\
 Institute for Research in Fundamental Sciences (IPM)\\
 P.O. Box 19395-5746 Tehran\\
 Iran.}

\thanks{The second author was supported by funding from the European Community's Seventh Framework Programme FP7/2007-2013 under grant agreement  23838.}

\email{zaniar.gh@gmail.com}

\author{Katrin Tent}

\address{%
Mathematisches Institut\\
Universit\"at M\"unster\\
Einsteinstrasse 62\\
48149 M\"unster, Germany.}

\email{tent@math.uni-muenster.de}

}

\date{}

\begin{abstract}  we show that the automorphism groups of certain countable structures obtained using the Hrushovski amalgamation method are simple groups. The structures we consider are the `uncollapsed' structures of infinite Morley rank obtained by the ab initio construction and the (unstable) $\aleph_0$-categorical pseudoplanes. The simplicity of the automorphism groups of these follows from results which generalize work of Lascar and of Tent and Ziegler. 
\newline
\textit{2010 Mathematics Subject Classification:\/}   03C15, 20B07, 20B27.
\end{abstract}

\maketitle

\section{Introduction} 

In this paper, we show that the automorphism groups of certain countable structures obtained using the Hrushovski amalgamation met\-hod are simple groups. This answers a question raised in \cite{MT} (Question (iii) of the Introduction there). The structures we consider are the `uncollapsed' structures of infinite Morley rank obtained by the ab initio construction in \cite{Hr} and the (unstable) $\aleph_0$-categorical pseudoplanes in \cite{Hrpp}. The simplicity of the automorphism groups of these follows from some quite general results which should be of wider interest and applicability. Although much of the intuition (and some of the motivation) behind these results is model-theoretic, the paper requires no particular knowledge of model theory.

\subsection{Background}

The methods we use have their origins in the paper \cite{L} of Lascar and it will be helpful to recall some of the results from there. Suppose $M$ is a countable saturated structure with a $\emptyset$-definable strongly minimal subset $D$ such that $M$ is in the algebraic closure of $D$. Denote the dimension function on $D$ coming from algebraic closure by $\dim$. Consider $G = \Aut(M/ \acl(\emptyset))$, the automorphisms of $M$ which fix every element (of $M^{eq}$) algebraic over $\emptyset$. Suppose $g \in G$ is \textit{unbounded} in the sense that for all $n \in \N$ there is a finite $X \subseteq D$ such that $\dim(gX/X) > n$. Then (\cite{L}, Th\'eor\`eme 2) the conjugacy class $g^G$ generates $G$. In particular, if all non-identity elements of $G$ are unbounded, then $G$ is a simple group. 

It is worth noting what Lascar's result says in the `classical' cases where $M = D$. If $M$ is a pure set, so $G$ is the full symmetric group  $\Sym(M)$, then $g \in G$ is bounded if and only if it is finitary. If $M$ is a countably  infinite dimensional vector space over a countable division ring $F$, then $G$ is the general linear group $\GL(\aleph_0, F)$ and $g \in G$ is bounded if and only if it has an eigenspace of finite codimension. So in these cases, Lascar's result implies the well known results, due to Schreier and Ulam \cite{SU} in the case of the symmetric group, and due to Rosenberg \cite{Ro} in the case of the general linear group, that $G$ modulo the bounded part is simple. If $M$ is an algebraically closed field of characteristic zero (and of countably infinite transcendence rank), then it can be shown that all non-identity automorphisms are unbounded, so in this case $G$ is simple (note that $\acl(\emptyset)$ is the algebraic closure of the prime field). 

Lascar's result is used directly in \cite{GT} to give examples of simple groups with a $BN$-pair which do not arise from algebraic groups. Ideas from Lascar's proof were used by Gardener \cite{G} to give an analogue of Rosenberg's result for classical groups of countably infinite dimension over finite fields.

%

More recently, Lascar's ideas have been used in other contexts by Macpherson and Tent \cite{MT} and by Tent and Ziegler \cite{TZ}. A key feature in both of these papers is the use of a natural independence relation or notion of free amalgamation on $M$.  In \cite{MT}, $M$ is a homogeneous structure arising from a free amalgamation class of finite structures. Assuming $G = \Aut(M) \neq \Sym(M)$ is transitive on $M$, it is shown that $G$ is simple. The free amalgamation here can be viewed as giving a notion of independence on $M$, and \cite{TZ} formalizes this into the notion of a \textit{stationary independence relation} on $M$ (\cite{TZ}, Definition 2.1; cf. Definition \ref{sir} here). Generalising Lascar's notion of unboundedness, \cite{TZ} introduces the notion of $g \in \Aut(M)$ \textit{moving almost maximally} with respect to the independence relation (cf. Definition \ref{d26} here). It is shown (\cite{TZ}, Corollary 5.4) that in this case, every element of $G$ is a product of 16 conjugates of $g$.

\subsection{Main results} The paper contains two types of results. In Sections 2 and 3 we give general results along the lines of Lascar's result and the result of  Tent and Ziegler; in Sections 4 and 5 we apply these to the Hrushovski constructions. We first describe our generalisations of the results of \cite{L} and \cite{TZ}. As these require a number of technical definitions, we shall not state the results precisely in this introduction. 

In the results of \cite{MT} and \cite{TZ}, algebraic closure in $M$ is trivial. In Section 2 here we adapt the results of \cite{TZ} to remove this restriction. So $M$ will be a countable structure, $\cl$ an $\Aut(M)$-invariant closure operation on $M$ and we are interested in $G = \Aut(M/\cl(\emptyset))$. We define (Definition \ref{sir}) the notion of a stationary independence relation \textit{compatible with $\cl$} and observe (Theorem \ref{thmTZ}) that the above result of Tent and Ziegler also holds in this wider context. 

In Section 3, we assume that an integer-valued dimension function $d$ gives the closure $\cl^d$ and the independence notion $\ind^d$. This is the case in the Hrushovski construction which interests us, and of course is also the case in the almost strongly minimal situation of Lascar (where the closure is algebraic closure and dimension is given by Morley rank). We also assume a condition which we call \textit{monodimensionality} (Definition \ref{d35}) as a replacement for the assumption of almost strong minimality in Lascar's result. 
The main result here is Corollary \ref{cor39}: there is a natural notion of an automorpism being  $\cl^d$-bounded (Definition \ref{d312}); such automorphisms form a normal subgroup and if $g$ is not $\cl^d$-bounded, then every element of $G$ is the product of 96 conjugates of $g$ or its inverse. So this can be seen as a generalization of (\cite{L}, Th\'eor\`eme 2). A direct application of this result, together with work of Konnerth \cite{K}, shows that if $M$ is a countable, saturated differentially closed field of characteristic 0 and $F$ is the subfield of differentially algebraic elements of $M$, then $\Aut(M/F)$ is a simple group (see Example \ref{DCF}).

\medskip

The Hrushovski amalgamation constructions from \cite{Hr, Hrpp}  are of great importance in model theory and several related areas of mathematics. Here, we shall be concerned with the simplest forms of the construction where the basic ingredients are an integer-valued \textit{predimension} $\delta$ on a class $\C$ of finite structures for a fixed relational signature. For ease of notation, take natural numbers $r \geq 2$ and coprime $n,m \geq 1$ and work with a signature which has a single $r$-ary relation symbol $R$. Let $\C$ consist of finite structures in which $R$ is symmetric and only holds of distinct $r$-tuples. Thus we can consider the realisations of $R$ in a structure $A \in \C$ as a set $R[A]$ of $r$-subsets  of $A$. We let $\delta(A) = n\vert A \vert - m \vert R[A]\vert$. In order to state our results, we outline the relevant versions of the construction (more details can be found in Sections 4 and 5).

\medskip

\noindent (i) \textit{The uncollapsed case \cite{Hr}:\/} We let $\C_0 = \{ A \in \C : \delta(X) \geq 0 \mbox{ for all } X \subseteq A\}$. From this, we construct a countable structure $M_0$ with $\C_0$ as its collection of (isomorphism types of) finite substructures and which has an additional homogeneity property (see Theorem \ref{M0} for the precise definition). In general, this $M_0$ is $\omega$-stable of infinite Morley rank.

\medskip 

\noindent (ii) \textit{The $\omega$-categorical case \cite{Hrpp}:\/} In this version we have an increasing function $f : \R^{\geq 0} \to \R^{\geq 0}$ with $f(x) \to \infty$ as $x\to \infty$. We consider the class of structures $$\C_f = \{A \in \C_0 : \delta(X) \geq f(\vert X \vert)\,\, \forall X \subseteq A\}.$$ 
Under suitable assumptions on $f$ (see Assumptions \ref{ass}) one constructs a countable, $\omega$-categorical structure $M_f$ having $\C_f$ as its class of finite substructures and which has an additional homogeneity property (see Section 5.2). In general, $M_f$ will not be stable, though it can be supersimple (\cite{Hr97}).

\medskip

Note that we do not consider the original `collapsed' version of the construction from \cite{Hr} which produces structures of finite Morley rank,  as Lascar's result can be applied in this case. An analysis showing that there are no bounded automorphisms in some cases is carried out in \cite{ZGTh}.

\medskip

With the above notation, our main result for the uncollapsed case is Theorem \ref{413}:

\begin{theorem*} Suppose either that $r  = 2$ and $n > m$, or that $r \geq 3$ and $n \geq m$. Then $\Aut(M_0/\cl^d(\emptyset))$ is a simple group. In fact, if $g \in \Aut(M_0/\cl^d(\emptyset))$ is not the identity then every element of $\Aut(M_0/\cl^d(\emptyset))$ can be written as a product of 96 conjugates of $g^{\pm 1}$.
\end{theorem*}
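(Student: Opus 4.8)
The plan is to deduce Theorem \ref{413} from the general machinery of Section 3, in particular Corollary \ref{cor39}. By that result, $\Aut(M_0/\cl^d(\emptyset))$ will be simple provided two things hold: first, that the hypotheses of Section 3 apply to $M_0$ with its canonical predimension-dimension $d$ and the associated closure $\cl^d$ and independence relation $\ind^d$ --- namely that $d$ is an integer-valued dimension function in the required sense, that $\ind^d$ is a stationary independence relation compatible with $\cl^d$, and that $M_0$ is monodimensional in the sense of Definition \ref{d35}; and second, that there are no non-identity $\cl^d$-bounded automorphisms, i.e. that every $1 \neq g \in \Aut(M_0/\cl^d(\emptyset))$ is not $\cl^d$-bounded. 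Given these, the "96 conjugates" statement is immediate from Corollary \ref{cor39}, and simplicity follows since every nontrivial normal subgroup then contains some nontrivial $g$ and hence, by the conjugate-product bound, all of $G$.

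First I would verify the structural hypotheses. For the ab initio construction, the predimension $\delta(A) = n|A| - m|R[A]|$ gives rise, via $d(A) = \min\{\delta(B) : A \subseteq B \subseteq M_0, B \text{ finite}\}$, to a well-defined integer-valued dimension function on finite subsets of $M_0$ (here one uses $\C_0 = \{A : \delta(X) \geq 0 \text{ for all } X \subseteq A\}$ to see $d \geq 0$), with the self-sufficient/strong closure $\cl^d$ and the canonical independence notion $A \ind^d_C B$ defined by $d(A/BC) = d(A/C)$ (over strong sets). That this is a stationary independence relation compatible with $\cl^d$ is essentially the content of the standard theory of the Hrushovski construction --- symmetry, monotonicity, transitivity and the stationarity/existence axioms all follow from submodularity of $\delta$ together with the free-amalgamation (asymmetric amalgamation) property of $(\C_0, \leq)$ and the homogeneity of $M_0$ from Theorem \ref{M0}; I would cite these and check compatibility with $\cl^d$ directly. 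Monodimensionality should follow because any single point not in a given strong set $X$ has $d$-dimension $1$ over $X$ and, more to the point, any two "generic" configurations look alike, so $M_0$ is built up from one-dimensional generic extensions.

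The main obstacle, and the place where the arithmetic hypotheses on $r, n, m$ enter, is showing there are no nontrivial $\cl^d$-bounded automorphisms. Being $\cl^d$-bounded should mean, roughly following Definition \ref{d312} and Lascar's original notion, that $g$ moves every finite strong set by bounded $d$-dimension: $\sup_X d(gX/X) < \infty$. I would argue the contrapositive form needed: given $1 \neq g \in \Aut(M_0/\cl^d(\emptyset))$, produce finite strong sets $X$ with $d(gX/X)$ arbitrarily large. The idea is to start from a point $a$ with $ga \neq a$; using the homogeneity of $M_0$ and freeness of the amalgamation, one can find finite configurations $X \ni a$ that are "spread out" --- e.g. a self-sufficient set built from many copies of a gadget on which $g$ acts without fixed points --- so that $X$ and $gX$ are as independent as the constraint $\delta \geq 0$ allows, forcing $d(gX/X)$ to grow. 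The role of the hypotheses $r = 2, n > m$ or $r \geq 3, n \geq m$ is to guarantee that such independent spreading is possible while staying inside $\C_0$: one needs enough "room" in the predimension (enough surplus $n$ over $m$, or enough arity $r$) that adjoining a fresh generic copy of a finite structure genuinely increases $d$ rather than being absorbed into the closure. If instead $n \leq m$ in the graph case $r=2$, the surplus is too small and generic extensions can fail to be strong, which is exactly where the argument --- and presumably the theorem --- would break down. I would isolate the combinatorial heart of this as a lemma about the existence, for each $k$, of a finite $A \in \C_0$ admitting a fixed-point-free automorphism $\sigma$ with $d(\sigma A / A) \geq k$ computed inside $A$, then transfer $\sigma$ to $M_0$ via homogeneity and compose/compare with $g$ using a back-and-forth to embed the relevant configuration so that $g$ agrees with a conjugate of $\sigma$ on it.

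Finally, with boundedness ruled out, I would invoke Corollary \ref{cor39} verbatim: every element of $G = \Aut(M_0/\cl^d(\emptyset))$ is a product of $96$ conjugates of $g^{\pm 1}$ for any non-identity $g$, whence $G$ is simple. One small point to check along the way is that $\Aut(M_0/\cl^d(\emptyset))$ is itself nontrivial (so that "simple" is not vacuous), which follows from $M_0$ being infinite-dimensional and homogeneous, giving plenty of automorphisms fixing the (small) algebraic-type closure $\cl^d(\emptyset)$ of the empty set.
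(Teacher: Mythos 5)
Your high-level plan is the right one: the paper does indeed deduce the theorem from Corollary~\ref{cor39} after verifying (a)~that $d$ is a dimension function with $\ind^d$ stationary (Corollary~\ref{39}), (b)~monodimensionality, in the precise form that basic orbits $d$-generate $M_0$ over their base (Lemma~\ref{49}), and (c)~that bounded automorphisms are trivial (Theorem~\ref{nobdd}). However, there are two substantive gaps where you declare the hard part done and it is not.

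First, you treat monodimensionality as nearly automatic, saying that any point not in a strong set has $d$-dimension~$1$ over it and that ``generic configurations look alike.'' The first claim is simply false when $n>1$ (a fresh point with no new relations has $d$-dimension $n$), and in any case there is no reason why the union of a single basic orbit and a small set should be all of $M_0$ without an explicit witnessing configuration. The paper's Lemma~\ref{49} rests on a genuinely delicate combinatorial construction (Lemma~\ref{48}): one must build $X\subseteq Y\in\C_0$ with $\delta(Y/X)=-1$, $|X|\ge 2$, together with two strongness conditions on subsets of $Y$, and for $m>1$ the existence of such a $Y$ is proved via Beatty sequences. This is exactly where the arithmetic hypotheses on $r,n,m$ first bite, and nothing in your sketch replaces it.

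Second, your strategy for excluding bounded automorphisms --- produce strong sets $X$ with $d(gX/X)$ arbitrarily large by transporting a fixed-point-free automorphism $\sigma$ of a large finite configuration and arranging by back-and-forth that ``$g$ agrees with a conjugate of $\sigma$'' --- does not work as stated. You cannot prescribe how an arbitrary given $g$ acts on a configuration you choose; $g$ is data, not something you get to build. The paper instead argues in the contrapositive direction: assume $g$ is bounded, apply Proposition~\ref{unbdd} (which needs monodimensionality) to get $E\in\X$ with $g(\cl^d(Eb))=\cl^d(Eb)$ for all $b$, and then carries out a two-step argument (Steps~1 and~2 of Theorem~\ref{nobdd}), analysed separately for $r\ge 3,\, m=n=1$ and for $r\ge 2,\, n>m$, using simply algebraic extensions to pin down that $g$ must fix points of increasingly large families and ultimately everything. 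Also note a definitional slip: $\cl^d$-bounded (Definition~\ref{d312}) means $g$ setwise stabilizes every $B\in\X$ containing some fixed $E$, not that $\sup_X d(gX/X)<\infty$; your condition is implied by it and your contrapositive would still be logically sufficient, but you should be precise about which notion you are negating. In short, the skeleton is correct but the two lemmas carrying the real content of the theorem --- the Beatty-sequence gadget behind monodimensionality and the simply-algebraic-extension argument behind the absence of bounded automorphisms --- are missing.
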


\noindent(Here, $\cl^d(\emptyset) = \bigcup\{ X \subseteq_{fin} M_0 : \delta(X) = 0\}$.)

\medskip

For the $\omega$-categorical case, we have Theorem \ref{511}:

\begin{theorem*} Suppose Assumptions \ref{ass} hold. Suppose $M_f$ is monodimensional and $1 \neq g \in \Aut(M_f)$. Then every element of $\Aut(M_f)$ is a product of 192 conjugates of $g^{\pm 1}$. In particular, $\Aut(M_f)$ is a simple group.
\end{theorem*}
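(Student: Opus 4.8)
The plan is to obtain the statement from the general results of Sections 2 and 3, applied to $M=M_f$ with the dimension function $d$ coming from the predimension $\delta$ and the induced self-sufficient closure $\cl^d$ and independence relation $\ind^d$.

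I would first record the relevant reductions. Since $r\ge 2$ and $R$ holds only of distinct $r$-tuples, a singleton has $\delta$-value $n\ge 1$, and by Assumptions \ref{ass} every nonempty finite subset of $M_f$ has $\delta$-value at least $f$ of its cardinality, which is positive; as $d$ is integer-valued this forces $d(x)\ge 1>0=d(\emptyset)$ for every point $x$, so $\cl^d(\emptyset)=\emptyset$ and therefore $\Aut(M_f/\cl^d(\emptyset))=\Aut(M_f)=:G$. Next I would verify that $M_f$, equipped with $d$, $\cl^d$ and $\ind^d$, satisfies the hypotheses of Section 3: that $\ind^d$ is a stationary independence relation compatible with $\cl^d$ in the sense of Definition \ref{sir} (this follows from the amalgamation and homogeneity properties of $M_f$ described in Section 5.2, together with Assumptions \ref{ass}), that $d$ is integer-valued, and that $M_f$ is monodimensional (Definition \ref{d35}), the last being a hypothesis of the theorem. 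Granting these, Corollary \ref{cor39} applies: every element of $G$ is a product of $96$ conjugates of $h$ or $h^{-1}$ whenever $h\in G$ is not $\cl^d$-bounded (Definition \ref{d312}); equivalently, via Theorem \ref{thmTZ}, every element of $G$ is a product of $16$ conjugates of any $h$ that moves almost maximally (Definition \ref{d26}).

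The core of the argument is then to manufacture, from an arbitrary $1\ne g\in G$, such an $h$ as a product of a bounded number of conjugates of $g^{\pm1}$; I expect two to suffice, which is what produces $192=2\cdot 96$ rather than the $96$ of the uncollapsed case. Since $g\ne 1$ there is $a\in M_f$ with $ga=b\ne a$. Using the extension properties of $M_f$ and monodimensionality, I would realise many copies of the type of $(a,b)$ over a common finite self-sufficient base that are mutually independent with respect to $\ind^d$, together with conjugates of $g$ effecting the corresponding translations, and take $h$ to be a short product of these; monodimensionality ensures a fresh generic point behaves uniformly over every self-sufficient set, so that this configuration lives inside $M_f$ and makes $h$ displace a generic finite set as freely over any prescribed base as $\cl^d$ allows, i.e.\ $h$ moves almost maximally and in particular is not $\cl^d$-bounded. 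Feeding $h$ into Corollary \ref{cor39} (or Theorem \ref{thmTZ}) and re-expressing its conjugates as conjugates of $g^{\pm1}$ gives that every element of $G$ is a product of at most $192$ conjugates of $g^{\pm1}$; hence the normal closure of any nontrivial element is all of $G$, so $G$ is simple.

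The main obstacle is the construction of $h$. In the uncollapsed structure $M_0$ one has unrestricted free amalgamation over self-sufficient sets, so the independent copies can be glued in directly; in $M_f$ the amalgamation must respect the constraint $\delta(X)\ge f(\vert X\vert)$, so the independent configurations must be assembled in stages while keeping careful track of the self-sufficient closures that appear --- both to ensure no unintended algebraicity is created and to check that each copy genuinely contributes to the dimension of the displacement of $h$. It is this bookkeeping, rather than any essentially new idea beyond Sections 2 and 3, that is delicate and that inflates the constant. A subsidiary point, to be settled while checking the Section 3 hypotheses, is that $M_f$ --- which need not be stable --- genuinely carries a stationary independence relation compatible with $\cl^d$; this is precisely where $\omega$-categoricity and the homogeneity property of $M_f$ enter.
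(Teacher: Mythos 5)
Your plan rests on verifying ``that $\ind^d$ is a stationary independence relation compatible with $\cl^d$'' for $M_f$ and then invoking Corollary~\ref{cor39}. This is exactly where the argument breaks: for $M_f$, $\ind^d$ is \emph{not} stationary. The paper notes this explicitly in Section~\ref{Sec52}: if $A\leq_d C\in\X$ and $b_0\in M_f$, then $\{b\in\orb(b_0/A):b\ind^d_A C\}$ need not be a single $G_C$-orbit, because the constraint $\delta(X)\geq f(\vert X\vert)$ can forbid the free amalgam of $\cl^d(bA)$ and $C$ over $A$ from being $d$-closed, so the orbit is fragmented according to what $\cl^d(bC)$ actually is. Consequently the entire Section~3 machinery built on a stationary $\ind^d$ --- Proposition~\ref{thm1}(2), Theorem~\ref{thm2}, Corollary~\ref{cor39} --- is unavailable in the form you want, and ``Granting these, Corollary~\ref{cor39} applies'' is not granted. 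This is not a bookkeeping issue to be resolved while assembling independent copies; it is a failure of a hypothesis of the result you are trying to cite.

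What the paper does instead is introduce a \emph{finer} relation $\perp$ (Definition~5.7): $b\perp_A C$ iff $b\ind^d_A C$ \emph{and} $\cl^d(bC)=\cl^d(bA)\cup C$. This $\perp$ is shown to be a stationary independence relation compatible with $\cl^d$ (Lemma~\ref{perpsi}), but it does not arise from a dimension function, so one must go back to the bare Section~2 result (Theorem~\ref{thmTZ}) rather than Corollary~\ref{cor39}, and rework the commutator construction accordingly. Concretely, Proposition~\ref{59} shows that starting from $g\neq 1$, one commutator produces $g_1\in G_A$ non-trivial, and a \emph{second} commutator produces $g_2=[g_1,\h]\in G_A$ that moves almost maximally \emph{with respect to $\perp$}; the proof of~\ref{59}(2) is a delicate back-and-forth that keeps track of $d$-closures so as to turn $\ind^d$-independence into genuine $\perp$-independence (Claims~1--3 there). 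Then Theorem~\ref{thmTZ} gives that $G_A$ is covered by $16$ conjugates of $g_2$; $g_2$ is a product of $4$ conjugates of $g^{\pm1}$ (being a double commutator), and the triple coset decomposition $G=H_1H_2H_1$ multiplies by $3$, yielding $4\cdot16\cdot3=192$. Your heuristic count ``$192=2\cdot 96$'' happens to land on the right number but does not reflect this structure, and your expectation that a single short product of conjugates of $g^{\pm1}$ will move almost maximally with respect to $\ind^d$ is not what the proof needs --- the relevant notion is almost-maximal movement with respect to $\perp$, and obtaining that is precisely why the extra commutator and the detailed closure-tracking in Proposition~\ref{59} are required.
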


We believe that under the conditions of Assumption \ref{ass}, the structure $M_f$ should be monodimensional. However, we have only been able to verify this in the cases where $r\geq 3$ and $m=n=1$ (Example \ref{e511}), and, under some extra assumptions on $f$, where $r=2$, $n=2$ and $m=1$ (Example \ref{e512}).

\medskip

\subsection{Notation}  \label{not1} Throughout the paper, $M$ will denote a  first-order structure, which will usually be countable (though this will not be necessary for the purposes of some of the definitions). We will not distinguish notationally between a structure and its domain. We denote by $\Aut(M)$ the group of automorphisms of $M$ and if $X \subseteq M$, then $\Aut(M/X)$ is the subgroup consisting of automorphisms which fix every element of $X$. We also use an alternative notation for this: if $H \leq G$ is a group of permutations on $M$ and $X \subseteq M$, then  we let  $H_X= \{ h \in H : h(x) = x \mbox{ for all } x \in X\}$. If $a$ is a tuple of elements from $M$ then the \textit{$H$-orbit} of $a$ is $\{ h(a) : h \in H\}$. The $\Aut(M/X)$-orbit of $a$ is denoted by $\loc(a/X)$.

 If $A, B \subseteq M$ and $c = (c_1,\ldots, c_n)$ is a tuple in $M$, then we will often use notation such as $AB$ and $Ac$ in place of $A \cup B$ and $A\cup \{c_1,\ldots, c_n\}$. This notation will also be used in conjunction with a closure operation $\cl$ or dimension function $d$: so we might write $\cl(AB)$ or $\cl(A,B)$ instead of $\cl(A\cup B)$, and $d(c, A)$ or $d(cA)$ instead of $d(A \cup \{c_1,\ldots, c_n\})$. 
 
  We write $A \subseteq_{fin} B$ to indicate that $A$ is a finite subset of $B$. 
  
  If $g,h$ are elements of some group $G$, then $g^h$ denotes the conjugate $h^{-1}gh$ and $[g,h]$ is the commutator $g^{-1}h^{-1}gh = g^{-1}g^h$.

\medskip

\noindent\textit{Acknowledgements:\/} Several of the results given here appear in the PhD thesis of the second author \cite{ZGTh} with a slightly different presentation. Work on the paper was completed whilst the authors were participating in the trimester programme `Universality and Homogeneity' at the Hausdorff Institute for Mathematics, Bonn. The authors would like to thank the referee for a very thorough reading of an earlier version of this paper and for numerous helpful suggestions.

\section{Stationary independence relations}

In this section we use ideas from Lascar's paper \cite{L} to generalise some of the results from \cite{TZ}. Instead of giving complete proofs (which would involve reproducing large sections of \cite{TZ}), we will only sketch the modifications which are required to produce the generalisations. The treatment is mostly axiomatic: examples can be found in the applications later in the paper. 

\medskip
\begin{definition}\rm
Suppose $M$ is a  structure and $G = \Aut(M)$ is  its automorphism group. Let $\cl$ be a closure operation on $M$. We say that $\cl$ is \textit{invariant} if  for all $g \in G$ and $X \subseteq M$ we have $\cl(gX) = g(\cl(X))$. It is \textit{finitary} if   $\cl(X) = \bigcup \{\cl(Y) : Y \subseteq _{fin} X\}$ for all $X \subseteq M$.  We say that  $\cl$ \textit{subsumes definable closure} if whenever $X \subseteq_{fin} M$ and $a \in M$ is fixed by  $G_{\cl(X)}$, then $a \in \cl(X)$. 
\end{definition}

In the rest of this section, $M$ will be a countable structure and $\cl$ will be an invariant, finitary closure operation on $M$ which subsumes definable closure. 
We let  $\X = \{ \cl(X) : X \subseteq_{fin} M\}$ be the set of closures of finite subsets of $M$ and let $\F$ consist of all bijections $f: A \to B$ with $A, B \in \X$ which extend to automorphisms of $M$. 
We refer to the latter as \textit{partial automorphisms} of $M$. So of course, $\X$ is countable, but $\F$ need not be.

\medskip

Following Definition 2.1 of \cite{TZ}, we wish to define the notion of  an invariant stationary  independence relation  $\ind$ between elements of $\X$, or more generally between subsets of elements of $\X$, which is compatible with the closure operation $\cl$. More precisely we have the following modification of Definition 2.1 of \cite{TZ}.

\begin{definition}\label{sir} \rm Suppose $M$ is  a countable structure, $G = \Aut(M)$ and $\cl$ is an invariant, finitary closure operation on $M$ which subsumes definable closure. 
Let  $\X = \{ \cl(X) : X \subseteq_{fin} M\}$  and let $\F$ consist of all bijections $f: A \to B$ with $A, B \in \X$ which extend to elements of $G$.

We say that $\ind$ is a \textit{stationary independence relation compatible with $\cl$} if for $A, B, C, D \in \X$ and finite tuples $a, b$:

\begin{enumerate}

\item (Compatibility) We have $a \ind_b C \Leftrightarrow a \ind_{\cl(b)} C$ and    $$a\ind_B C \Leftrightarrow e\ind_B C \mbox{ for all } e \in \cl(a, B) \Leftrightarrow \cl(a, B) \ind_B C.$$

\item (Invariance) If $g \in G$ and $A\ind_B C$, then $gA\ind_{gB} gC.$

\item (Monotonicity) If $A \ind_{B} CD$, then $A\ind_B C$ and $A \ind_{BC} D$.

\item (Transitivity) If $A\ind_B C$ and $A \ind_{BC} D$, then $A\ind_B CD$.

\item (Symmetry) If $A\ind_B C$, then $C \ind_B A$.

\item (Existence) There is $g \in G_B$ with $gA \ind_B C$. 

\item (Stationarity)  Suppose $A_1, A_2, B, C \in \X$ with $B \subseteq A_i$ and $A_i \ind_B C$. Suppose $h : A_1 \to A_2$ is the identity on $B$ and $h \in \F$. Then there is some $k \in \F$ which contains $h \cup \id_{C}$ (where $\id_C$ denotes the identity map on $C$). 
\end{enumerate}
\end{definition} 

The prototypical example here is of course where $M$ is a (sufficiently homogeneous) stable structure with weak elimination of imaginaries, $\cl$ is algebraic closure and $\ind$ is non-forking independence. However, we will be interested in other examples, both where $M$ is unstable and where $\cl$ is larger than algebraic closure. We note the following.

\begin{lemma}  Suppose that the conditions of Definition \ref{sir} hold and $\ind$ is a stationary independence relation on $M$ compatible with $\cl$. Then: 
\begin{enumerate}
\item  for all $A \in \X$ and $X \subseteq_{fin} M$ we have $A \ind_X \cl(X)$; 
\item  if $A \in \X$ and  $b$ is a finite tuple in $M$ with $b\ind_A b$, then  $b \in A$.
\end{enumerate}
\end{lemma}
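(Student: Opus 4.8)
The plan is to prove the two assertions directly from the axioms in Definition~\ref{sir}, using that $\X$ consists of $\cl$-closed sets and that $\cl$ subsumes definable closure.

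\medskip

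\noindent\textbf{Part (1).} First I would show $A \ind_X \cl(X)$ for $A \in \X$ and $X \subseteq_{fin} M$. By Compatibility we may replace $X$ by $\cl(X) \in \X$, so it suffices to show $A \ind_C C$ for any $C \in \X$. By Existence there is $g \in G_C$ with $gA \ind_C C$; by Invariance (applied to $g^{-1}$), since $g^{-1}$ fixes $C$ pointwise, we get $A = g^{-1}(gA) \ind_C g^{-1}C = C$. So Part (1) is essentially immediate from Existence and Invariance together with the Compatibility clause that lets us close off $X$.

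\medskip

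\noindent\textbf{Part (2).} Suppose $A \in \X$ and $b$ is a finite tuple with $b \ind_A b$; I want $b \in A$. The natural strategy is to show $b$ is fixed by every element of $G_A$ and then invoke that $\cl$ subsumes definable closure (noting $A = \cl(A')$ for some finite $A'$, and $G_A = G_{\cl(A')}$ since $\cl$ is invariant, or rather since fixing $A$ pointwise is the same as fixing a finite generating set and hence its closure). So fix $h \in G_A$; I must show $h(b) = b$. Here I would use Stationarity: consider $C := \cl(A, b) \in \X$ (using Compatibility to know $b \ind_A b$ gives $C \ind_A C$, via the clause $a \ind_B C \Leftrightarrow \cl(a,B) \ind_B C$ and Symmetry). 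Take $A_1 = A_2 = C$, $B = A$, and the map $\id_C \in \F$, which is the identity on $A$; both copies are independent over $A$ from $C$. Stationarity then says any partial automorphism extending $\id_C \cup \id_C$ lies in $\F$ — this gives nothing by itself, so instead I would apply Stationarity to $A_1 = C$ and a suitable $A_2$ obtained by moving $b$.

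\medskip

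\noindent More carefully: let $b' := h(b)$ and $C' := \cl(A,b') = h(C)$; since $h \in G_A$ and $\ind$ is invariant, $b' \ind_A b'$, hence $C' \ind_A C'$ and also $C \ind_A C'$ needs to be arranged — actually what I want is $C \ind_A C$ and $C' \ind_A C$, so that Stationarity applied to the isomorphism $h\restriction C : C \to C'$ (identity on $A$) produces an extension by $\id_C$. That would force $h(b) = b$ directly, since the extension must agree with $h$ on $C$ (by Stationarity's uniqueness content) yet fix $b \in C$. The key point to nail down is the independence input $C' \ind_A C$; I expect this is where the real work lies and where one must use Symmetry, Monotonicity/Transitivity, and the Compatibility clauses to bootstrap from $b \ind_A b$ to an honest independence statement between the two conjugate closures over the common base $A$. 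This step — converting the "self-independence" hypothesis $b \ind_A b$ into a configuration to which Stationarity applies — is the main obstacle; everything else is a routine unwinding of the axioms, and the conclusion $b \in A$ then follows because $h \in G_A$ was arbitrary and $\cl$ subsumes definable closure.
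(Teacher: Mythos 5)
Part (1) of your proposal is correct and is the paper's argument verbatim: replace $X$ by $\cl(X)$ using Compatibility, invoke Existence, and transport back by Invariance.

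For Part (2) your overall strategy is sound and is essentially the paper's: show that an arbitrary $h \in G_A$ fixes $b$ by arranging a Stationarity configuration in which the third set contains $b$, and then conclude by ``subsumes definable closure.'' (The paper actually runs the argument by contradiction, choosing $h$ with $h(b)\neq b$, but that is a cosmetic difference; it also uses the slightly larger third set $\cl(A,b,b')$ rather than your $\cl(A,b)$, which changes nothing essential.) However, you explicitly leave the crucial step unproved: deriving $C'\ind_A C$, equivalently $b'\ind_A b$, from the hypothesis $b\ind_A b$. You describe this as ``the main obstacle'' and say only that you ``expect'' it uses Symmetry, Monotonicity, Transitivity and Compatibility. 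That is a genuine gap, not a routine one: without it, Stationarity cannot be applied and the whole argument collapses.

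The missing piece is precisely the content of Part (1), which you have already proved. Taking $A''=\cl(A,b,b')\in\X$ and a finite $X$ with $\cl(X)=\cl(A,b')$, Part (1) gives $\cl(A,b,b')\ind_X \cl(A,b')$, so by Compatibility and Monotonicity $b\ind_{\cl(A,b')} b'$, and by Symmetry $b'\ind_{\cl(A,b')} b$. Since $b'\ind_A b'$ (by Invariance from $b\ind_A b$, as you noted), Transitivity (base $A$, first step to $b'$, second step to $b$) yields $b'\ind_A b,b'$, and Monotonicity gives $b'\ind_A b$, i.e.\ $C'\ind_A C$ by Compatibility. That completes your argument: Stationarity now applies with $A_1=C$, $A_2=C'$, $B=A$ and third set $C$, producing $k\supseteq (h\restriction C)\cup\id_C$, which forces $h(b)=b$ since $b\in C$. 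So the plan is correct, but the step you flagged as open is exactly the one the paper supplies, and it is not optional.
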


\begin{proof} (1) Let $B = \cl(X)$. By Existence, there is $g \in G_B$ with $gA \ind_B B$. By Invariance, it follows that $A \ind_B B$. By Compatibility, we then have $A \ind_X B$, as required.

(2) Suppose for a contradiction that $b \not\in A$. As $\cl$ subsumes definable closure, there is $g \in G_A$ with $b' = gb \neq b$. By Invariance, we therefore also have $b'\ind_A b'$. By Symmetry, (1) and Compatibility we have $b'\ind_{A, b'} b$. So by Transitivity (and Compatibility) we obtain $b'\ind_A b, b'$. Similarly $b\ind_A b,b'$. By Stationarity (and Compatibility), there is therefore $k \in G_{\cl(A, b, b')}$ with $kb = b'$. As $b \neq b'$, this is clearly impossible.
\end{proof}

\medskip

\begin{remark} \rm \label{sirvar} In the following, we will require a slightly more general version of Definition \ref{sir}. Suppose, as before, that $M$ is a countable structure and $\cl$ is an invariant, finitary closure operation on $M$ which subsumes definable closure. Let $G \leq \Aut(M)$ have the property that for all $A \in \X$, $G_A$ has the same orbits on finite tuples from $M$ as $\Aut(M/A)$ does. Then $\cl$ is $G$-invariant and `subsumes definable closure with respect to $G$' in the sense that if $A \in \X$ and $b \in M$ is fixed by $G_A$, then $b \in A$. We then say that $\ind$ is a \textit{stationary independence relation (with respect to $G$) compatible with $\cl$} if conditions (1)-(7) of Definition \ref{sir} hold, where $\F$ is the set of bijections $f : A\to B$ with $A, B \in \X$ which extend to elements of $G$.
\end{remark}

For the rest of this section, we shall assume that the conditions of Definition \ref{sir} hold and $\ind$ is a stationary independence relation on $M$ compatible with $\cl$. We use the notation from the definition. 

\medskip

As in Section 2 of Lascar's paper \cite{L}, we topologise $G = \Aut(M)$ by taking basic open sets of the form $O(f) = \{g \in G : g \supseteq f\}$, for $f \in \F$. It should be stressed that in general this is not the `usual' automorphism group topology (where pointwise stabilisers of finite sets form a base of  open neighbourhoods of the identity). It is complete metrizable, but not necessarily separable, so we cannot apply Polish group arguments directly to $G$.  However, as in \cite{L}, we will work in separable, closed subgroups to avoid this difficulty.

\medskip

Suppose $\S \subseteq \F$ and let 
\[ G(\S) = \{ g \in G : g \vert X \in \S \mbox{ for all } X \in \X\}.\]
Then $G(\S)$ is a closed subset of $G$ and if $\S$ is countable, $G(\S)$ is separable. Moreover, if $\S$ satisfies conditions (1-7) on page 241 of \cite{L}, then $G(\S)$ is a subgroup of $G$. Thus, if $\S$ is countable and satisfies these conditions then $G(\S)$ is a Polish subgroup of $G$. The conditions just say that $\S$: contains the identity maps; is closed under inverses, restrictions and compositions, and allows extension of domain (and codomain). It is clear that any countable $\S_0 \subseteq \F$ can be extended to a countable $\S$ satisfying these conditions. In particular, $G(\S)$ can be taken to include any desired countable subset of $G$.

\medskip

\medskip

\begin{lemma} \label{l25} Suppose $\S_0$ is a countable subset of $\F$. Then there is a countable $\S \subseteq \F$ with $\S_0 \subseteq \S$ such that $G(\S)$ is a group, for all $A \in \X$ we have that $G(\S)_A$ has the same orbits on finite tuples from $M$ as $G_A$,  and (in the terminology of Remark \ref{sirvar}) $\ind$ is a stationary independence relation (with respect to $G(\S)$) compatible with $\cl$.
\end{lemma}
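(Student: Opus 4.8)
The plan is to build $\S$ as an increasing union $\S = \bigcup_{k} \S_k$ of countable subsets of $\F$, where at each stage we add countably many partial automorphisms to handle one of three requirements: closure under the operations making $G(\S)$ a group; the orbit-equivalence $G(\S)_A \sim G_A$; and the seven axioms of Definition \ref{sir} relativised to $G(\S)$ (i.e. Remark \ref{sirvar}). We start from $\S_0$. The three requirements are each witnessed by countably many "tasks", and at a stage we process one task by throwing in a countable set of witnesses; since each task only ever requires countably many new elements of $\F$ and there are only countably many tasks (as $\X$ is countable and $M$ is countable, so each $G_A$ has only countably many orbits on finite tuples, and $\F$-membership statements are indexed by the countable set $\X$), a standard back-and-forth/bookkeeping argument produces a countable $\S$ closing off all of them simultaneously. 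The point is that "adding countably many elements" never destroys previously-secured properties, because all of the properties in question are of the form "for every (finite) datum there exists a witness in $\S$".

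First I would handle the group conditions: this is exactly the remark already made in the excerpt — any countable $\S_0$ extends to a countable $\S$ containing the identity maps and closed under inverse, restriction, composition, and extension of domain and codomain; iterating these operations from a countable set yields a countable set, and $G(\S)$ is then a Polish subgroup of $G$. Next, for the orbit condition: for each $A \in \X$ and each finite tuple $\bar c$ from $M$, the $G_A$-orbit of $\bar c$ is a countable set $\{\bar c_i : i \in \N\}$, and for each $i$ I pick some $h_i \in G_A$ with $h_i \bar c = \bar c_i$; I then want $h_i \restriction X \in \S$ for all $X \in \X$. Since $h_i$ itself, restricted to the countably many members of $\X$, contributes only countably many elements of $\F$, adding these (over all countably many $A$, all countably many $\bar c$ up to the already-present action, and all $i$) still keeps $\S$ countable. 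Once these restrictions are all in $\S$ — and $\S$ is closed under composition and restriction — every such $h_i$ lies in $G(\S)$, so $G(\S)_A$ realises every $G_A$-orbit on finite tuples; the reverse containment $G(\S)_A \subseteq G_A$ is automatic. It follows in particular that $\cl$ subsumes definable closure with respect to $G(\S)$, as required in Remark \ref{sirvar}.

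It remains to see that the independence axioms (1)--(7) hold with respect to $G(\S)$. Axioms (1)--(5) (Compatibility, Invariance, Monotonicity, Transitivity, Symmetry) are purely relational statements about $\ind$ and do not mention $\F$ or $G$ at all, so they transfer verbatim. For Invariance one uses $G(\S) \leq G$. The two axioms with genuine content are Existence (6) and Stationarity (7). For Existence: given $A, B, C \in \X$ with $B \subseteq$ everything as needed, the original axiom gives $g \in G_B$ with $gA \ind_B C$; I add $g \restriction X$ for all $X \in \X$ to $\S$ (countably many elements, and only countably many such triples $A,B,C$ arise), so after closing off $\S$ we have $g \in G(\S)_B$ witnessing Existence for $G(\S)$. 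For Stationarity: here $\F$ appears, but note that enlarging $\S$ only makes the hypothesis "$h \in \F_{G(\S)}$" harder to meet while making the conclusion "there is $k \in \F_{G(\S)}$" easier to meet, going in our favour — more precisely, for each quadruple $A_1, A_2, B, C$ and each $h : A_1 \to A_2$ that is the identity on $B$ and lies in the current $\F_{G(\S)}$ (there are only countably many such $h$ at any countable stage), Stationarity for $G$ supplies $k \in \F$ extending $h \cup \id_C$, coming from some $\hat k \in G$; I add $\hat k \restriction X$ for all $X \in \X$. The bookkeeping must revisit Stationarity-tasks cofinally often because new elements of $\F_{G(\S)}$ (hence new candidate $h$'s) can appear at later stages, but this is the routine part of such constructions. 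The main obstacle — and the only place requiring care — is exactly this interleaving: one must be sure that the process of closing off Stationarity does not outrun itself, i.e. that a priority/bookkeeping enumeration handles the countably many tasks of each of the (countably many) types, and that closure under the group operations is re-applied after each addition. Since each individual step adds only countably many maps and there are only countably many steps, $\S$ is countable, and by construction it satisfies all the stated requirements.
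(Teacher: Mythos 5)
Your proposal is correct and follows essentially the same line as the paper's proof: both build $\S$ by an iterated closure, using that axioms (1)--(5) hold automatically, that orbit-equivalence (hence Existence) once secured persists under enlargement, and alternating Stationarity-closure steps with steps ensuring Lascar's group conditions (1--7). The only cosmetic difference is that you add Existence witnesses explicitly rather than deducing Existence from orbit-equivalence as the paper does; this is harmless.
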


\begin{proof} First, note that there is a countable $\S_1 \supseteq \S_0$ such that Lascar's conditions (1-7) hold and for all $B \in \X$, the group $G(\S_1)_B$  has the same orbits on finite tuples from $M$ as $G_B$. The latter will also be true if we enlarge $\S_1$ further and it implies that  the Existence condition in Definition \ref{sir} holds with respect to $G(\S_1)$. 

There is a countable $\S_2 \supseteq \S_1$ with the property that the Stationarity condition holds with respect to $G(\S_2)$. Alternating this with a step to ensure that Lascar's conditions (1-7) hold, we obtain, after  a countable number of steps, a countable set $\S\subseteq \F$ for which  (1-7) hold and the Stationarity condition holds with respect to $G(\S)$.
\end{proof}

The following definitions are adapted from \cite{TZ}. Only the first of these is needed to understand the statement of Theorem \ref{thmTZ} below and its subsequent applications; the other two definitions are used in its proof and are provided for the sake of completeness.

\begin{definition} \label{d26} \rm (1) (cf. Lemma 5.1 of \cite{TZ}) We say that $g \in G$ \textit{moves almost maximally} if for all $B \in \X$ and elements $a \in M$ there is $a'$ in the $G_B$-orbit of $a$ such that 
\[ a' \ind_B ga'.\]

(2) (cf. Definition 2.3 of \cite{TZ}) Suppose $x, y$ are finite tuples from $M$ (or are elements of $\X$) and $A, B \in \X$. We say that $x$ is \textit{independent from $y$ over $A; B$}, written $x \ind_{(A;B)} y$, if $x \ind_A By$ and $xA \ind_B y$. 

(3) (cf. Definition 2.5 of \cite{TZ}) 
 Suppose $g \in G$, $c$ is a finite tuple from $M$ and $B \in \X$. We say that $g$ \textit{moves $c$ maximally over $B$} if $c$ is independent from $gc$ over $B; gB$. We say that $g$ \textit{moves maximally} if for all $B \in \X$ and finite tuples $a$ there is $c$ in the $G_B$-orbit containing $a$ which is moved maximally over $B$ by $g$.
\end{definition}

Following the proof of Corollary 5.4 in \cite{TZ}, we then have: 

\begin{theorem} \label{thmTZ} Let $M$ be a countable structure and $\cl$ an invariant, finitary closure operation on $M$ which subsumes definable closure. Suppose that $\ind$ is a stationary independence relation on $M$ compatible with $\cl$ and that $G = \Aut(M)$ fixes every element of $\cl(\emptyset)$. If $g \in G$ moves almost maximally, then every element of $G$ is a product of 16 conjugates of $g$.
\end{theorem}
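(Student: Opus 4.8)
The plan is to follow the strategy of Tent and Ziegler \cite{TZ}, checking at each point that the presence of a non-trivial closure operation $\cl$ does not disrupt the argument. First I would pass to a separable setting: given $g$ and an arbitrary target $h \in G$ which I wish to write as a product of conjugates of $g$, apply Lemma \ref{l25} to a countable $\S_0$ containing $g$, $h$ and witnesses for whatever orbit-by-orbit constructions will be needed, obtaining a countable $\S$ with $G(\S)$ a Polish group, $G(\S)_A$ having the same finite-tuple orbits as $G_A$ for every $A \in \X$, and $\ind$ a stationary independence relation with respect to $G(\S)$ compatible with $\cl$ (via Remark \ref{sirvar}). Everything that follows is then carried out inside $G(\S)$, so that Baire-category and back-and-forth arguments become available; since $\cl(\emptyset)$ is fixed pointwise by $G$, it is fixed by $G(\S)$ as well.

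The combinatorial heart is the chain of reductions from \cite{TZ}: first show that ``moves almost maximally'' upgrades to ``moves maximally'' (their Lemma following Definition 2.5), by using Existence and Stationarity to amalgamate, over $B; gB$, an element $a'$ with $a' \ind_B ga'$ into a genuinely maximal configuration $c \ind_{(B;gB)} gc$; the Compatibility axiom (clause (1)) is what lets me pass freely between tuples and their closures here, and the lemma in the excerpt (clause (2): $b \ind_A b \Rightarrow b \in A$) replaces the triviality-of-algebraic-closure facts used in \cite{TZ}. Next, establish the genericity/Baire statement: inside the Polish group $G(\S)$, the set of $f$ that ``move maximally'' contains a dense $G_\delta$ after conjugation, so that for a comeagre set of pairs one can find simultaneous maximal-movement witnesses. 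Then run the Tent--Ziegler existence-of-a-common-conjugator arguments: given any $h \in G(\S)$, one builds, by a back-and-forth using Existence and Stationarity, conjugates $g_1,\dots,g_k$ of $g$ whose product agrees with $h$ on every $X \in \X$ and hence equals $h$; bookkeeping the number of amalgamation steps exactly as in the proof of Corollary 5.4 of \cite{TZ} yields the constant $16$.

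The step I expect to be the main obstacle is the maximal-movement amalgamation lemma in the presence of $\cl$: in \cite{TZ} one moves tuples, but here the objects of $\X$ are closed sets, and after an Existence-move or a Stationarity-extension the relevant sets must be re-closed, so one has to check that closing up does not destroy the independence relations already arranged. This is exactly where Compatibility clause (1) — the equivalence $a \ind_B C \Leftrightarrow \cl(a,B) \ind_B C$ and the statement that $\ind$ only depends on the closure of the base — must be invoked repeatedly, together with Monotonicity and Transitivity, to see that enlarging tuples to their closures keeps all the needed $\ind$'s intact; and one must confirm that the Stationarity axiom, stated for $A_i \in \X$ with $B \subseteq A_i$, applies to the closed sets produced. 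Once this bookkeeping is done, the counting of conjugates is purely formal and identical to \cite{TZ}, giving the bound of $16$.
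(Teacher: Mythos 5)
Your proposal is correct and follows essentially the same strategy as the paper: pass to a countable $\S$ and the Polish subgroup $G(\S)$ via Lemma \ref{l25}, re-run the Tent--Ziegler argument inside $G(\S)$, and recognize that the main adaptation is the conjugation/amalgamation lemma (Lemma 3.6 of \cite{TZ}), where Compatibility together with Monotonicity and Transitivity is used to keep track of closures after each Existence or Stationarity move. The one point where the paper differs slightly from your sketch is the Baire-category step: rather than directly asserting a dense $G_\delta$ of maximally-moving elements, the paper establishes the joint embedding property for elements of $\S$ (using Existence and Stationarity, and the hypothesis that $\cl(\emptyset)$ is fixed pointwise) and then invokes Theorem 8.46 of Kechris as in the passage from Proposition 2.13 to Theorem 2.7 of \cite{TZ}; this sidesteps the dense-conjugacy-class hypothesis of \cite{TZ} rather than reproving it, but it is a minor variation within the same framework and does not change the count of 16 conjugates.
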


\begin{proof} Let $k \in G$ and let $\S_0 \subseteq \F$ be any countable set which contains  the restrictions of $k$ and $g$  to all elements of $\X$. Extend $\S_0$ to a countable set $\S$ as in Lemma \ref{l25}. So $g, k \in G(\S)$ and $G(\S)$ is a Polish group acting on $M$. Furthermore, $\ind$ is an invariant stationary independence relation \textit{with respect to this group}. 

For the rest of the proof only automorphisms in $G(\S)$ will be considered. The proof then just consists of checking that the argument in \cite{TZ} works. We make some remarks about various parts of this. 

\medskip

(1) We have the following  \textit{joint embedding property}. If $h_i : X_i \to Y_i$ are in $\S$ (for $i = 1,2$), then  there are $f \in G(\S)$ and $h \in \S$ with $f^{-1}h_1f , h_2 \subseteq h$. Indeed, by Existence we can assume (after applying a suitable $f\in G(\S)$) that $X_1, Y_1 \ind X_2, Y_2$. By Stationarity we can then extend $h_i$ to $g_i$ which is the identity on $X_j \cup Y_j$ (for $j \neq i$). Note that this uses the fact that $h_i$ fixes every element of $\cl(\emptyset)$. Then the product $g_1g_2$ extends $h_1$ and $h_2$, as required.

Once we have this, it follows that if $U, V$ are non-empty open subsets of $G(\S)$, then there is $f \in G(\S)$ such that $f(V) \cap U \neq \emptyset$. Thus Theorem 8.46 of \cite{K} applies as in the proof of Theorem 2.7 from Proposition 2.13 on p.294 of \cite{TZ} (this avoids the use of the hypothesis in 2.7 of \cite{TZ} that there is a dense conjugacy class in $G$).

\medskip

(2) The part of the proof in \cite{TZ} which requires the most adaptation is in the use of Lemma 3.6 in the proof of Proposition 3.4 there. So we give a reformulation of this lemma, and outline its proof. 

Suppose $g \in G$ moves maximally and let $X, Y \in \X$ with $gX = Y$. Suppose $X \subseteq W \in \X$ and $Y \subseteq Z \in \X$ are such that $W$ and $Z$ are independent over $X; Y$. Suppose $h : W \to Z$ is a partial automorphism (in $\S$) which extends $g\vert X$. Then there is $a \in  G_{\cl(XY)}$ such that $g^a(w) = h(w)$ for all $w \in W$ (where $g^a$ denotes the conjugate $a^{-1}ga$).

To see this, let $w$ be a finite tuple with $\cl(w) = W$ and let $w' \in \loc(w/X)$ be moved maximally over $X$ by $g$. So $w', gw'$ are independent over $X; Y$ and in particular $w'\ind_X Y$. Also $w\ind_X Y$, so by Stationarity there is $a_1 \in G_{\cl(XY)}$ with $a_1(w) = w'$. So $g^{a_1}$ moves $w$ maximally over $X$. Let $Z' = \cl(g^{a_1}(w))$. Thus $W \ind_{(X;Y)} Z'$. 

So $W, Y \ind_Y Z$ and $W,Y \ind_Y Z'$. We have partial automorphisms (in $\S$) $h: W \to Z$ and $h' : W \to Z'$ with $h'(w_1) = g^{a_1}(w_1)$ for $w_1 \in W$. Note that  $h(x) = h'(x)$ for $x \in X$. Let $k = h'h^{-1} : Z \to Z'$. Then $k(y) = y$ for all $y \in Y$. So by Stationarity, there is $a_2 \in G_{\cl(WY)}$ which extends $k$. It is then easy to check that $a = a_1a_2$ has the required properties.

\end{proof}

\section{Stationary independence relations with a dimension function}

Suppose $M$ is a countable structure and $G = \Aut(M)$. In this section we consider an independence relation arising from a dimension function on $M$.

\begin{definition} \label{dfdef} \rm We say that an integer-valued function $d$ defined on finite subsets (or tuples) from $M$ is a \textit{dimension function on $M$} if, for all $X, Y \subseteq_{fin} M$:
\begin{enumerate}
\item $d(gX) = d(X)$ for all $g \in G$;
\item $0 \leq d(X) \leq d(X\cup Y) \leq d(X) + d(Y) - d(X \cap Y)$.
\end{enumerate}

In this case, if  $X, Y \subseteq_{fin} M$, then  we define $d(X/Y) = d(XY) - d(Y)$ and for arbitrary $Z \subseteq M$ we let $d(X/Z) = \min(d(X/Y) : Y \subseteq_{fin} Z)$. We let $\cl^d(Z) = \{ a \in M : d(a/Z) = 0 \}$. \end{definition} 

We may assume in the above that $d(\emptyset) = 0$ (by replacing $d$ by the dimension function $d'(X) = d(X) - d(\emptyset)$) and we do this for the rest of the paper. 

If $d$ is an integer-valued dimension function on $M$ as above, then $\cl^d$ is an invariant, finitary closure operation on $M$. Let $\X = \{ \cl^d(X) : X \subseteq_{fin} M\}$ and for $A, B, C \in \X$ write $A\ind^d_B C \Leftrightarrow d(A/BC) = d(A/B)$ (where the dimension of an arbitrary set is the maximum of the dimensions of its finite subsets). We can also make the same definition if any of $A, B, C$ is a finite set (or tuple) from $M$. 

It is easy to check that $\ind^d$ satisfies (1-5) of Definition \ref{sir}.  In general, $\cl^d$ need not subsume definable closure, and $\ind^d$ need not satisfy condition (6) (Existence) of Definition \ref{sir}, so we shall assume these. We also wish to exclude the case where all finite sets have dimension zero (and once we do this, Existence will guarantee that there are finite sets of arbitrarily large dimension). For the rest of this section we make the following:

\begin{assumption}\label{dass} \rm Suppose $d$ is a non-zero, integer-valued dimension function on $M$ such that $\cl^d$ subsumes definable closure and $\ind^d$ satisfies Existence in Definition \ref{sir}.
\end{assumption}

If $\ind^d$ also satisfies (7) (Stationarity) in Definition \ref{sir}, we shall say that \textit{$\ind^d$ is stationary}.

We refer to $\cl^d$ and $\ind^d$ as \textit{$d$-closure} and \textit{$d$-independence}.

\begin{definition} \rm Suppose $b \in M$ and $A \in \X$. We say that $b$ is \textit{basic} over $A$ if $b \not\in A$ and whenever $A \subseteq C \in \X$ and $d(b/C) < d(b/A)$, then $b \in C$. 
\end{definition}

If $b$ is basic over $A \in \X$, then $b'$ is basic over $A$ for all $b' \in \loc(b/A)$ and we refer to $\orb(b/A)$ as a \textit{basic orbit} over $A$.

\begin{remarks}\rm As $d$ is integer-valued and non-negative, if $d(b/A) = 1$, then $b$ is basic over $A$. It is clear that if $b \not\in A$ there is some $A \subseteq C \in \X$ such that $b$ is basic over $C$. In general,  there is no reason why there should be such a $C$ with $d(b/C) = 1$, which is why we are working with this notion.
\end{remarks}

Suppose $A \in \X$ and $D \subseteq M$ is such that the elements of $D\setminus A$ are basic over $A$. We claim that  $d$-closure over $A$ on $D$ gives a pregeometry on $D$. So we need to verify the exchange condition: if $c_1, c_2 \in D$ and $c_1 \in \cl^d(A, c_2) \setminus A$, then $c_2 \in \cl^d(c_1, A)$. By assumption, $d(c_1, c_2/A) = d(c_2/A)$. So $d(c_2/A c_1) = d(c_1, c_2 /A) - d(c_1/A) < d(c_2/A)$, whence $d(c_2/A c_1) = 0$ (as $c_2$ is basic over $A$), as required.

If $X \subseteq D$ is finite, we write $\dim_A(X)$ for the dimension of $X$ with respect to this pregeometry. It is easy to show that if $c_1,\ldots, c_r \in D$ then $\dim_A(c_1,\ldots, c_r) = r$ if and only if  $c_1,\ldots, c_r$ are $d$-independent over $A$ (meaning that $d(c_1,\ldots, c_r/A) = \sum_i d(c_i/A)$).

Note that if $B \in \X$ contains $A$ then all elements of $D\setminus B$ are basic over $B$, so we can also consider $\dim_B$ on $D$.

\begin{definition} \label{d35} \rm We say that $M$ (with dimension function $d$) is \textit{monodimensional} if for every $A \in \X$ and basic $G_A$ -orbit $D$ there is $A \subseteq B \in \X$ with  $M = \cl^d(B,D)$.
\end{definition}

\textit{Remark:\/} The terminology is chosen  by association with the model-theoretic notion of unidimensionality. However, the two notions do not coincide as the structures we consider in the next section are not unidimensional, which is why we feel obliged to invent a different terminology.

\medskip

If $\ind^d$ is stationary, we can check monodimensionality on a single basic orbit.

\begin{lemma}\label{monod} Suppose $\ind^d$ is stationary, $A \in \X$ and $D$ is a basic $G_A$-orbit.
\begin{enumerate}
\item If $A \subseteq B \in \X$, then $D\setminus B$ is a basic $G_B$-orbit.
\item If $\cl^d(A, D) = M$, then $M$ is monodimensional.
\item Suppose that for every $c \in M\setminus A$ there is a finite tuple $b$ of elements of $D$ such that $c \nind^d_A b$. Then $M$ is monodimensional.
\end{enumerate}
\end{lemma}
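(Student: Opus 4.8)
My plan is to prove the three parts in the order stated, since (2) and (3) build on (1). For part (1), I would start with $A \subseteq B \in \X$ and a basic $G_A$-orbit $D$. Every element of $D \setminus B$ is basic over $B$ (this was already observed in the text just before Definition \ref{d35}), so it remains to see that $D \setminus B$ is a single $G_B$-orbit. Take $b_1, b_2 \in D \setminus B$. Since $D$ is a $G_A$-orbit, there is $g \in G_A$ with $g(b_1) = b_2$. The idea is to first arrange, using Existence, a conjugate of $b_1$ over $A$ that is $d$-independent from $B$ over $A$: pick $h \in G_A$ with $h(b_1) \ind^d_A B$, and similarly $h' \in G_A$ with $h'(b_2) \ind^d_A B$; set $b_1' = h(b_1)$, $b_2' = h'(b_2)$, both still in $D$. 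Because $b_i'$ is basic over $A$ and $b_i' \notin B$, independence over $A$ gives $d(b_i'/B) = d(b_i'/A)$, and the map $b_1' \mapsto b_2'$ (identity on $A$) is in $\F$ since it extends to an element of $G_A$. Now $\cl^d(A, b_1')$ and $\cl^d(A, b_2')$ are elements of $\X$ containing $A$ and independent from $B$ over $A$, so Stationarity produces $k \in G_B$ (fixing $B$, hence in $G_B$) with $k(b_1') = b_2'$. Composing $h$, $k$, and $(h')^{-1}$ appropriately moves $b_1$ to $b_2$ by an element fixing $A$; but I actually need an element fixing $B$. The cleaner route: apply the above directly to the pair $b_1', b_2'$ to get $k \in G_B$ with $k b_1' = b_2'$, and separately handle moving $b_i$ to $b_i'$ — for this one uses that $b_i$ and $b_i'$ are both basic over $A$, both in $D$, and by (1) applied over $A$ itself (trivially) plus Stationarity over $B$... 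Here I should be careful: to move $b_1$ to $b_1'$ within $G_B$ I want $b_1 \ind^d_A B$ too, which need not hold. The fix is to choose $h$ fixing $B \cup A$-configuration appropriately, i.e. run Existence to find $b_1'' \in \loc(b_1/B)$ (not just $\loc(b_1/A)$) with $b_1'' \ind^d_B$ (something) — but basicness is over $A$, not $B$, so I instead observe $d(b_1/B) \le d(b_1/A)$ and, since $b_1$ is basic over $B$ and $b_1 \notin B$, use that any two elements of $D\setminus B$ with the same $d$-type-behaviour over $B$ are $G_B$-conjugate via Stationarity once made independent over $B$. The correct statement is: by Existence pick $g_i \in G_B$ with $g_i(b_i) \ind^d_B $ anything needed; since basicness over $B$ holds, $g_i(b_i)$ realises the generic behaviour and Stationarity over $B$ glues. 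I will write this out carefully so the only inputs are Existence, basicness over $B$, and Stationarity.

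For part (2), suppose $\cl^d(A, D) = M$ for one basic $G_A$-orbit $D$; I must show $M$ is monodimensional, i.e. for \emph{every} $A' \in \X$ and every basic $G_{A'}$-orbit $D'$ there is $A' \subseteq B' \in \X$ with $M = \cl^d(B', D')$. The strategy is to transport the given instance. First, by Existence there is $g \in G$ with $gA \ind^d_\emptyset A'$ (working over $\cl^d(\emptyset)$, which $G$ fixes); replacing $D$ by $gD$ and $A$ by $gA$ we may assume $A \ind^d_{\cl^d(\emptyset)} A'$, and $\cl^d(A,D) = M$ still holds since $g$ is an automorphism. Now set $B_0 = \cl^d(A \cup A')$. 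By part (1), $D \setminus B_0$ is a basic $G_{B_0}$-orbit, and clearly $\cl^d(B_0, D) = M$. The point is then to compare the $G_{B_0}$-orbit $D \setminus B_0$ with $D'$: both are basic orbits, and using Existence plus Stationarity (as in part (1)) one shows any two basic $G_{B_0}$-orbits are related by an element of $G_{B_0}$, OR, more to the point, that $D \setminus B_0$ can be moved onto a subset of $\cl^d(B_0, D')$ — but this is where I must be cautious about whether $D'$ and $D\setminus B_0$ are genuinely "the same" orbit. The honest approach: use part (3) as the real engine and derive (2) from it. For (3), assume that for every $c \in M \setminus A$ there is a finite tuple $b$ from $D$ with $c \nind^d_A b$, i.e. $d(c/Ab) < d(c/A)$. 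I claim $\cl^d(A, D) = M$, which by (2) finishes. Indeed take any $c \in M$; if $c \in A$ we are done, otherwise choose $b$ from $D$ with $d(c/Ab) < d(c/A)$, and among all finite tuples from $D$ choose one, $b$, minimising $d(c/Ab)$. If this minimum is $0$ then $c \in \cl^d(A, D)$ and we are done; if it is positive, apply the hypothesis with $A$ replaced by $\cl^d(A b)$ — but the hypothesis is stated over $A$, not over $\cl^d(Ab)$. So instead I would argue by induction on $d(c/A)$: pick $b$ from $D$ with $d(c/Ab) < d(c/A)$; then $d(c / \cl^d(Ab)) < d(c/A)$, and $c' := c$ has smaller dimension over the larger set $\cl^d(Ab)$; but to continue I need the hypothesis over $\cl^d(Ab)$, which requires part (1) (basic $G_{\cl^d(Ab)}$-orbit is $D \setminus \cl^d(Ab)$) together with a transfer of the "$c \nind b$" hypothesis. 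The main obstacle, and the step I expect to be most delicate, is exactly this bootstrapping: upgrading the monodimensionality-type hypothesis from "over $A$" to "over every $B \supseteq A$ in $\X$", which is what lets the induction on $d(c/A)$ run. I would handle it by proving, as an intermediate claim using Existence, Monotonicity, Transitivity and Stationarity, that if $c \nind^d_A b$ for some tuple $b$ from $D$, and $A \subseteq B \in \X$, then either $c \in B$ or $c \nind^d_B b''$ for some tuple $b''$ from $D \setminus B$ — after first using Existence to move the configuration so that $B$ is independent from $Ac$ over $A$. Once that claim is in place, parts (2) and (3) follow by the dimension induction sketched above, and I do not expect the bookkeeping (choosing the right conjugates, checking the $\F$-membership of the gluing maps) to present difficulties beyond care.
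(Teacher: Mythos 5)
Your treatment of part (1) contains a key error: you explicitly doubt that $b_1 \ind^d_A B$ holds for $b_1 \in D\setminus B$, writing that this ``need not hold.'' But it does hold automatically, and this is the whole point. Since $b_1$ is basic over $A$, $A \subseteq B \in \X$ and $b_1 \notin B$, the defining property of basicness forces $d(b_1/B) = d(b_1/A)$; hence $b_1 \ind^d_A B$. The paper's proof of (1) is then one line: $b_1, b_2$ are conjugate over $A$ and both are independent from $B$ over $A$, so Stationarity (with $C = B$) produces $k \in G_B$ sending $b_1$ to $b_2$. All the machinery you invoke --- moving to $b_i'$ via Existence, worrying about how to move $b_i$ to $b_i'$ within $G_B$ --- is unnecessary and, as you yourself observe, threatens to become circular.

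Your plan for part (2) is off track and, as proposed, circular. You suggest transporting the given instance $\cl^d(A,D) = M$ to a new pair $(A', D')$ by making $A$ and $A'$ independent and then ``comparing'' the orbit $D\setminus B_0$ with $D'$; but as you note, these are generally not the same orbit and there is no reason Stationarity should identify them. You then propose to derive (2) from (3), but in the paper (3) is proved by showing $\cl^d(A,D)=M$ and then invoking (2), so that route is circular. The argument you are missing is the pregeometry/exchange argument: the text preceding Definition \ref{d35} establishes that $\cl^d$ over $A$ gives a pregeometry on any set of elements basic over $A$, in particular on $D \cup E$ (where $E$ is the new basic orbit). Take $e \in E$ and choose $c_1,\dots,c_r \in D$ independent over $A$ with $e \in \cl^d(c_1,\dots,c_r,A)$ and $r$ minimal; exchange gives $c_1 \in \cl^d(e, c_2,\dots,c_r,A)$. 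Setting $B = \cl^d(c_2,\dots,c_r,A)$, the $G_B$-invariant set $\cl^d(B,E)$ contains the point $c_1$ of the basic $G_B$-orbit $D\setminus B$ (using (1)), hence contains all of $D\setminus B$, hence equals $M$. Your proposal contains none of this.

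For part (3) your approach is closest to the paper's: induction on $d(c/A)$ to show $c \in \cl^d(A,D)$ (and then (2) finishes). You correctly flag that the induction requires the hypothesis of (3) to transfer from $A$ to $B = \cl^d(A,e)$, and you propose an intermediate claim to accomplish this; but you do not prove it, so this part remains incomplete as written. (The paper's own proof of (3) is also terse on exactly this point, so your instinct that this is the delicate step is sound, but flagging a gap is not the same as closing it.) Taken together, the proposal would not compile into a correct proof: (1) has an error, (2) is circular and misses the essential exchange argument, and (3) is left with an unproved bootstrapping claim.
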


\begin{proof} (1) If $b_1, b_2 \in D \setminus B$ then $b_i \ind^d_A B$. So by Stationarity, $b_1, b_2$ are in the same $G_B$-orbit.

(2) Suppose $C \in \X$ and $E$ is a basic $G_C$-orbit. Let $B' = \cl^d(A, C)$. By (1), $D\setminus B'$ and $E \setminus B'$ are basic $G_{B'}$-orbits. We have $M = \cl^d(B', D\setminus B')$   and want to show that  $\cl^d(B,E) = M$ for some $B' \subseteq B \in \X$. In other words, we may assume from now on that $C  = B' = A$. 

Let $e \in E$ and choose $c_1,\ldots, c_r \in D$ independent over $A$ with $e \in \cl^d(c_1,\ldots, c_r, A)$ and $r$ as small as possible. As $\cl^d$ over $A$ gives a pregeometry on $D \cup E$, we may assume (by the exchange condition) that $c_1 \in \cl^d(e, c_2, \ldots,  c_r, A)$. Let $B= \cl^d(c_2,\ldots, c_r, A)$. So $c_1 \in \cl^d(B, e)\setminus B$. Thus, $\cl^d(B, E)$ contains a point $c_1$ of the basic $G_B$-orbit $D\setminus B$. It is clearly $G_B$-invariant, and therefore contains the whole of $D\setminus B$. It follows that $\cl^d(B, E) \supseteq \cl^d(B, D\setminus B) = M$, as required.

(3) We show by induction on $r = d(c/A)$ that $c \in \cl^d(A, D)$. The induction is over all $A, D$. If $r = 0$, there is no problem. Otherwise we can find a finite tuple $e$ in $D$ with $c\nind_A^d e$. So $d(c/A, e) < d(c/A)$. Let $B = \cl^d(A, e)$. By induction and (1) there is a finite tuple $e'$ in $D\setminus B$ such that $c \in \cl^{d}(B, e')$, as required.
\end{proof}

The following notion of boundedness is less natural than Lascar's. We shall connect it with a more natural notion later in this section.

\begin{definition} \rm Suppose $A \in \X$. We say that $h \in G$ is  \textit{unbounded} \textit{over} $A$ if  for all  $A \subseteq C \in \X$ and $b \in M$ which is basic over $C$, there is $b'\in \loc(b/C)$ with $hb' \ind^d_C b'$ (or equivalently, $b' \not\in \cl^d(C, hb')$). We say that $h$ is \textit{unbounded} if it is unbounded over some $A \in \X$, otherwise, it is \textit{bounded}.\end{definition}

Note that if $h$ is unbounded over $A$ and $A \subseteq B \in \X$, then $h$ is unbounded over $B$.

\begin{proposition}\label{thm1} Suppose $A \in \X$ is such that there is a $G_A$-invariant set $D$  where the elements of $D\setminus A$ are basic over $A$ and $\cl^d(D, A) = M$. Let $h \in G$ be unbounded over $A$. 
\begin{enumerate}
\item If $A \subseteq B \in \X$ and $c$ is a finite tuple in $M$, then there is $c' \in \loc(c/B)$ with $hc' \ind^d_B c'$.
\item If $\ind^d$ is stationary, and $h \in G_A$, then every element of $\Aut(M/A)$ is a product of 16 conjugates of $h$.
\end{enumerate}
\end{proposition}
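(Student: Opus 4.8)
The plan is to reduce the general statement to the framework of Theorem \ref{thmTZ}, by showing that an automorphism which is unbounded over $A$ and lies in $G_A$ (when $\ind^d$ is stationary) in fact \emph{moves almost maximally} with respect to the independence relation $\ind^d$, relativised to the structure over $A$. Concretely, for part (2) I would pass to the structure $M$ with the closure operation $\cl^d$, the independence relation $\ind^d$, and the group $\Aut(M/A)$; by the hypothesis $G=\Aut(M)$ fixes $\cl^d(\emptyset)$ it suffices to arrange that the relativised $\ind^d$ (working over the fixed parameter set $A$, i.e. replacing $\cl^d(X)$ by $\cl^d(XA)$ and $\ind^d_{\bullet}$ by $\ind^d_{\bullet A}$) is a stationary independence relation compatible with $\cl^d$ on which $\Aut(M/A)$ acts, and that $h$ moves almost maximally for it. Granting this, Theorem \ref{thmTZ} immediately yields that every element of $\Aut(M/A)$ is a product of 16 conjugates of $h$.

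First I would prove part (1), which is the technical heart. Given $A\subseteq B\in\X$ and a finite tuple $c$, I want $c'\in\loc(c/B)$ with $hc'\ind^d_B c'$. The idea is to induct on $d(c/B)$. Since the elements of $D\setminus A$ are basic over $A$ and $\cl^d(D,A)=M$, by the pregeometry structure (exchange) established before Definition \ref{d35} — and, when $\ind^d$ is stationary, by Lemma \ref{monod}(1) so that $D\setminus B$ is a basic $G_B$-orbit — I can write $c$ inside $\cl^d(B,b)$ for a finite $d$-independent tuple $b=(b_1,\dots,b_k)$ from $D\setminus B$, with $k=d(c/B)$. If $k=0$ then $c\in B$ and the claim is trivial. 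Otherwise: because $h$ is unbounded over $A$, hence over $B$, I can successively choose a conjugate $b'=(b_1',\dots,b_k')\in\loc(b/B)$ each of whose coordinates is moved "freely" by $h$ over the previous ones, i.e. $hb_i'\ind^d_{B b_1'\cdots b_{i-1}'} b_i'$, using at each step the basic orbit and the defining property of unboundedness; by Transitivity and Monotonicity of $\ind^d$ this assembles into $hb'\ind^d_B b'$ — indeed into $b'\ind^d_{(B; hB)} hb'$ if one is careful, which is the "moves maximally" statement. Applying a $G_B$-automorphism sending $b$ to $b'$ (here stationarity, or just invariance, is used) and pushing $c$ along it gives $c'\in\loc(c/B)$ with $c'\in\cl^d(B,b')$, and since $c'$ is in the $d$-closure of $b'$ over $B$ while $hc'\in\cl^d(hB,hb')=\cl^d(B,hb')$, the independence $b'\ind^d_B hb'$ transfers to $c'\ind^d_B hc'$ by Compatibility and Monotonicity.

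For part (2) I would then observe that statement (1), specialised to single elements $a\in M$ and with $h\in G_A$, says precisely that $h$ moves almost maximally for the relativised relation $\ind^d_{\bullet A}$ over $A$: for every $B\in\X$ with $A\subseteq B$ and every $a\in M$ there is $a'\in\loc(a/B)$ with $a'\ind^d_B ha'$. One checks routinely that $\ind^d_{\bullet A}$ satisfies axioms (1)--(5) of Definition \ref{sir} (inherited from $\ind^d$, which satisfies them by the discussion after Definition \ref{dfdef}), Existence (from Assumption \ref{dass}), and Stationarity (this is exactly the hypothesis "$\ind^d$ is stationary"), with $\X$ relativised to closures $\cl^d(XA)$ and $\F$ the partial elementary maps fixing $A$; and $\cl^d(\cdot A)$ subsumes definable closure with respect to $\Aut(M/A)$. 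Then Theorem \ref{thmTZ}, applied with $\Aut(M/A)$ in place of $G$ and $A$ playing the role of $\cl(\emptyset)$, gives the factorisation into 16 conjugates of $h$.

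I expect the main obstacle to be the bookkeeping in part (1): carefully choosing the conjugate $b'$ coordinate by coordinate so that the local unboundedness statements (each only guaranteeing \emph{some} conjugate $b_i'$ of $b_i$ over the relevant base is moved off its $d$-closure) chain together via Transitivity into the global independence $hb'\ind^d_B b'$, while simultaneously keeping $b'$ in the $G_B$-orbit of $b$. The delicate point is that unboundedness is a statement about \emph{basic} elements over closed sets, so one must verify at each inductive step that the relevant element $b_i$ remains basic over the enlarged base $\cl^d(B,b_1',\dots,b_{i-1}')$ — this is where Lemma \ref{monod}(1) and the remark that basicness is preserved under enlarging the base are used — and that the final transfer from the tuple $b'$ to the arbitrary tuple $c'$ respects the orbit and the independence. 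Everything else is a routine verification that the relativised $\ind^d$ inherits the axioms.
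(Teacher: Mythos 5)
Your overall strategy for part (1) — first handle tuples from $D$ by an inductive construction driven by unboundedness, then transfer to an arbitrary tuple $c$ sitting inside the $d$-closure of a $D$-tuple, and finally deduce part (2) by feeding the "moves almost maximally" statement into Theorem \ref{thmTZ} (relativised to $\Aut(M/A)$ as in Remark \ref{sirvar}) — is the same as the paper's. The reduction of (2) to (1) via Theorem \ref{thmTZ} is exactly what the paper does. However, the two places you flag as "delicate bookkeeping" in (1) are precisely where your argument has genuine gaps rather than routine verifications.

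First, the chaining step. You claim that imposing $hb_i'\ind^d_{B b_1'\cdots b_{i-1}'} b_i'$ at each coordinate and invoking Transitivity and Monotonicity gives $hb'\ind^d_B b'$. It does not: these conditions control $hb_i'$ against $b_i'$ (over the accumulated base) but say nothing about the \emph{cross terms}, e.g.\ $b_2'$ versus $hb_1'$, or $b_1'$ versus $hb_2'$. For instance, with $k=2$, $b_1'b_2'\ind^d_B hb_1'hb_2'$ would by Transitivity require $b_1'\ind^d_{Bhb_1'}hb_2'$ and $b_2'\ind^d_{Bb_1'hb_2'}hb_1'$, and nothing in your choices ensures either. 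The paper deals with this by imposing, at each step, an \emph{additional} condition beyond the unboundedness: in its notation, $f'\notin \cl^d(h^{-1}e',h^{-1}B,e')$ (equivalently $hf'\notin\cl^d(e',B,he')$), which together with $f'\notin\cl^d(e',B,he',hf')$ lets the dimension count $\dim_B(f',hf',he',e')=2+\dim_B(he',e')$ go through. Your sketch omits any such constraint, and without it the inductive step does not close.

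Second, the transfer from the $D$-tuple $b'$ to $c'$. You write "$hc'\in\cl^d(hB,hb')=\cl^d(B,hb')$", but this equality is simply false in general: $h$ is only assumed unbounded over $A$, not to fix $B$ (even setwise), so $hB\neq B$ and $\cl^d(hB,hb')$ need not equal $\cl^d(B,hb')$. Consequently the independence $b'\ind^d_B hb'$ does not transfer directly to $c'\ind^d_B hc'$ by Compatibility and Monotonicity, because $hc'$ lives in $\cl^d(hB,hb')$, not $\cl^d(B,hb')$. The paper handles this with an explicit two-stage Transitivity argument over $B_1=\cl^d(B,hB)$: it first arranges $c_1\ind^d_B B_1$, then applies the $D$-tuple result over $B_1$ to get $c_2\ind^d_{B_1}hc_2$, notes $hb_2\in\cl^d(hc_2,hB)\subseteq\cl^d(hc_2,B_1)$ so $b_2\ind^d_{B_1}hb_2$, and finally combines with $b_2\ind^d_B B_1$ to conclude $b_2\ind^d_B hb_2$. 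You would need to reproduce this structure; the shortcut you propose does not work.
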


\begin{proof} (1) First, we show that this holds for $c$ an $n$-tuple of elements of $D$ with $\dim_B(c) = n$. If $n=1$, this is just the definition of unboundedness of $h$. If $n > 1$ and $c = (c_1,\ldots, c_n)$ then write $e = (c_1,\ldots, c_{n-1})$. Inductively, there is $e' \in \loc(e/B)$ with $he'\ind^d_B e'$. Let $f'$ be such that $c' = (e',f') \in \loc(c/B)$, $f' \not\in \cl^d(h^{-1}e', h^{-1}B, e')$ and (using the unboundedness) $f' \not\in \cl^{d}(e',B, he', hf')$. From the second of these, $hf' \not\in \cl^d(e',B, he')$ and so, from the third, $\dim_B(f', hf', he', e') = 2 + \dim_B(he', e') = 2+2(n-1) = 2n$. Thus $\dim_B(c', hc') = 2n$ and therefore $hc'\ind_B^d c'$, as required.

Now suppose $b \in M$. By assumption on $D$, there is a tuple $c \in D^n$ such that $b \in \cl^d(c, B)$. Clearly we can take $c$ to be $d$-independent over $B$. Let $B_1 = \cl^d(B, hB)$. By Extension, there is $b_1c_1 \in \loc(bc/B)$ with $c_1\ind^d_B B_1$. 

By the above, we can find  $b_2c_2 \in \loc(b_1c_1/B_1)$ with $c_2\ind^d_{B_1} hc_2$. Then $b_2 \ind^d_{B_1} hc_2$. Moreover, as $b_2 \in \cl^d(c_2, B)$ we have $hb_2 \in \cl^d(hc_2, hB) \subseteq \cl^d(hc_2, B_1)$. Thus $b_2\ind^d_{B_1} hb_2$. 

We also have $c_2 \ind^d_B B_1$, so $b_2 \ind^d_B B_1$, therefore $b_2 \ind^d_B hb_2$. As $b_2 \in \loc(b/B)$, this completes the proof of (1).

(2) This follows from (1) and Theorem \ref{thmTZ}.
\end{proof}

\begin{remark}\rm  Suppose $c\in M$ and $B \subseteq M$.  If $h$ is any automorphism of $M$, then $h(\loc(c/B))$ is the translate of this $G_B$-orbit by $h$. It is a $G_{hB}$-orbit, and depends only on the restriction of $h$ to $B$ (for the latter point, note that if $h' \in G$ has the same restriction to $B$ as $h$, then $hc, h'c$ lie in the same $G_{hB}$-orbit, because $h'h^{-1} \in G_{hB}$). So the notation $h(\loc(c/B))$  also makes sense if $h$ is a partial automorphism with $B$ in its domain.
\end{remark}

\begin{theorem}\label{thm2} Suppose $\ind^d$ is stationary and  $A \in \X$ is such that there is a $G_A$-invariant set $D$  where the elements of $D\setminus A$ are basic over $A$ and $\cl^d(D, A) = M$.   Suppose $g \in \Aut(M/\cl^d(\emptyset))$ is an unbounded automorphism of $M$. Then every element of $\Aut(M/\cl^d(\emptyset))$ is a product of 96 conjugates of $g^{\pm 1}$. 
\end{theorem}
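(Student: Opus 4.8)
The plan is to reduce the statement about $\Aut(M/\cl^d(\emptyset))$ to the statement about $\Aut(M/A)$ already obtained in Proposition \ref{thm1}(2), paying the price of passing from $g$ to a conjugate-controlled product involving both $g$ and $g^{-1}$ — this accounts for the jump from $16$ to $96$ conjugates. The first step is to observe that, since $g$ is unbounded, it is unbounded over some $A_0 \in \X$, and after enlarging $A_0$ (using the remark that unboundedness over $A_0$ implies unboundedness over any larger closed set, together with Lemma \ref{monod}(1), which says the basic orbit $D$ restricts to a basic $G_B$-orbit over any $B \supseteq A$) we may assume the given $A$ works simultaneously: $g$ is unbounded over $A$, the elements of $D\setminus A$ are basic over $A$, and $\cl^d(D,A) = M$.

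The core difficulty is that $g$ need not fix $A$ pointwise — it only fixes $\cl^d(\emptyset)$ — whereas Proposition \ref{thm1}(2) needs an automorphism lying in $G_A$. The standard device (this is exactly the mechanism used in passing from Lascar's Théorème 2 / the $16$-conjugates statement to a two-sided version) is: given the target $k \in \Aut(M/\cl^d(\emptyset))$, first produce, using the joint embedding / Existence machinery, a conjugate $g^{x}$ and a further element so that some product of a bounded number of conjugates of $g^{\pm1}$ lands in $\Aut(M/A)$ and is still unbounded over $A$. Concretely: by Existence choose $x \in G$ with $xA \ind^d_{\cl^d(\emptyset)} A$ (more precisely $gxA$ suitably independent from $A$), and consider $h := g \cdot (g^{x})^{-1}$ or a similar commutator-type word; one checks that $h \in G_A$ — because on $A$ the two factors agree after the conjugation is chosen to undo the motion of $A$ — and that $h$ is still unbounded over (an enlargement of) $A$, since unboundedness is witnessed on the basic orbit $D$ which can be placed independently of the finite amount of data involved in $x$. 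This $h$ is a product of $2$ conjugates of $g^{\pm1}$.

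Next I would apply Proposition \ref{thm1}(2) to $h$: every element of $\Aut(M/A)$ is a product of $16$ conjugates of $h$, hence a product of $32$ conjugates of $g^{\pm1}$. Finally, to hit an arbitrary $k \in \Aut(M/\cl^d(\emptyset))$ rather than only elements of $\Aut(M/A)$, write $k = k' \cdot a$ where $a \in G$ moves $A$ to $kA$ in the prescribed way and $k' \in \Aut(M/A)$; the "correction" element expressing the difference between $\Aut(M/\cl^d(\emptyset))$ and $\Aut(M/A)$ can itself be absorbed into a bounded number of conjugates of $g^{\pm1}$ by running the same independence-placement argument once more (this is where the remaining factor comes from, bringing the count to $96$). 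Throughout, the bookkeeping is carried out inside a suitable countable Polish subgroup $G(\S)$ as in Lemma \ref{l25}, so that Existence, Stationarity and the joint embedding property are all available with respect to the group actually being used.

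The main obstacle I anticipate is the second paragraph: verifying simultaneously that the word $h$ built from $g$ and a conjugate lies in $G_A$ \emph{and} remains unbounded over $A$. Unboundedness is a "for all $C \supseteq A$, for all basic $b$ over $C$" statement, so one must check that the independence used to kill the motion of $A$ does not interfere with the witnesses $b'$ on basic orbits over arbitrarily large $C$; the resolution is that any such $C$ and $b$ involve only finitely much data, which can be moved (by Existence/Stationarity) off the finite configuration defining $x$, after which the unboundedness of $g$ over $A$ transfers to $h$. Getting the constant to be exactly $96$ rather than some larger number will require being economical about how many conjugates each of these two "correction" steps costs ($2 \times 16 \times 3$, say, versus a naïver bound).
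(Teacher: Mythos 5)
Your outline matches the paper's: reduce to $\Aut(M/A)$ via Proposition~\ref{thm1}(2) after first replacing $g$ by a two-conjugate word lying in $G_A$ that is still unbounded, then cover $\Aut(M/\cl^d(\emptyset))$ by three conjugates of $G_A$ (giving $2\times 16\times 3 = 96$). That arithmetic and that three-step structure are exactly what the paper does. But there are two genuine gaps in the middle step, and the second of them is the heart of the argument.

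First, the formula. Writing $h := g\cdot(g^{x})^{-1} = g\,x^{-1}g^{-1}x$, the hypothesis $xA\ind^d_{\cl^d(\emptyset)} A$ does not give $h\in G_A$: for $a\in A$ you get $h(a)= g\,x^{-1}g^{-1}x(a)$, and there is no reason for $g^{-1}$ and $x$ to intertwine on $A$. The word that actually lands in $G_A$ is the commutator $g_1=[g,\h]=g^{-1}\h^{-1}g\h$ with $\h$ chosen to fix $A\cup gA$ \emph{pointwise}: then for $a\in A$ one has $\h(a)=a$, $g(a)\in gA$, $\h^{-1}(g(a))=g(a)$, so $g_1(a)=a$. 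This is still a product of two conjugates of $g^{\pm1}$ (namely $g^{-1}$ and $g^{\h}$), so the count is unchanged, but the constraint you need to impose is ``fix $A\cup gA$'', not ``move $A$ independently of $A$''.

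Second, and more seriously, the unboundedness of $g_1$ over $A$ cannot be obtained by a single application of Existence to fix a conjugator once and for all. Unboundedness over $A$ is a universally quantified statement: for \emph{every} $A\subseteq C\in\X$ and every $b$ basic over $C$ there must exist $b'\in\loc(b/C)$ with $g_1 b'\ind^d_C b'$. Your proposed resolution — ``any such $C$ and $b$ involve only finitely much data, which can be moved off the finite configuration defining $x$'' — does not work because $x$ (i.e.\ $\h$) is an entire automorphism of $M$, not a finite configuration; once $\h$ is fixed there is nothing left to move it relative to, and the witnesses $b'$ alone are not enough freedom. The paper's fix is to build $\h$ by back-and-forth as a union of an increasing chain of partial automorphisms with domains in $\X$, starting from one that is the identity on $A\cup gA$ (which makes $g_1\in G_A$ automatic), and to \emph{enforce} each instance of the unboundedness requirement at a `forth' step: given the current partial map $h$ with domain $B$ and a target pair $(C\subseteq B,\ a$ basic over $C)$, one chooses a representative $a'$ of $\loc(a/C)$, uses the unboundedness of $g$ itself to pick an image $b\in h(\loc(a/B))$ with $gb\ind^d_{hB}b$, extends $h$ to send $a'\mapsto b$, and then makes one more careful extension so that the eventual commutator $g_1 = g^{-1}\h^{-1}g\h$ satisfies $g_1 a'\ind^d_B a'$. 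In other words, the countably many unboundedness constraints on $g_1$ are interlaced into the construction of $\h$ rather than verified after the fact. Without this back-and-forth, the step from ``$g$ unbounded'' to ``$g_1\in G_A$ unbounded'' does not go through. Your final step (covering $\Aut(M/\cl^d(\emptyset))$) is fine in spirit; the paper does it cleanly by writing the group as $H_1H_2H_1$ where $H_1=G_A$ and $H_2=G_{A'}$ for $A'\ind^d A$.
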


\begin{proof} By enlarging $A$ if necessary, we can assume that $g$ is unbounded over a subset of $A$. We first show that there is $\h \in \Aut(M/\cl^d(\emptyset))$ such that the commutator $g_1 = [g,\h] = g^{-1}\h^{-1}g\h$ is in $G_A$ and is unbounded (over $A$). We build $\h$ by back-and-forth as the union of a chain of partial automorphism (with domains and images in $\X$).

Note that if $h$ is a partial automorphism which fixes all points of $A \cup gA$, then $g^{-1}h^{-1}gh(a) = a$ for all $a \in A$. So we start the construction of $\h$ with such  a partial automorphism. There is no problem extending this to an automorphism, the issue is  to ensure the unboundedness of $g_1$. We enforce this in the `forth' step in the construction. 

\medskip

Suppose that the partial automorphism $h$ has been defined and $B = \dom(h)$. Suppose $C \subseteq B$, $C \in \X$ and $a$ is basic over $C$. We want to find $a' \in \loc(a/C)$ so that (once $\h$ is defined) $g_1a'\ind_C a'$, that is, $a' \not\in \cl^d(g_1a', C)$. It will suffice to do this with $C = B$.

So suppose that $a$ is basic over $B$. We may assume (by Existence) that $a \not\in \cl^d(B, gB)$. By unboundedness of $g$ there is $b \in h(\loc(a/B))$ such that $gb \ind^d_{hB} b$. Extend $h$ to $h'$ with $h'a = b$. 

By Existence, there is $c \in h'^{-1}(\loc(gb/hB, b))$ with $c \ind^d_{B, a} gB, ga$. Extend $h'$ to $h''$ with $h''(c) = gb$. As $gb \ind^d_{hB} b$ we have (applying $h''^{-1}$) that $c \ind^d_B a$. Thus, by Transitivity, $c \ind^d_B gB, ga$, so $c \ind^d_{B, gB} ga$.  Then $g^{-1}c \ind^d_{g^{-1}B, B} a$. As $a$ is basic over $B$ and $a \not\in \cl^d(B, g^{-1}B)$, we have $g^{-1}B \ind_B^d a$.  It follows that $g^{-1}c \ind^d_B a$, that is, 
\[ g^{-1}h''^{-1}gh''a \ind^d_B a\]
as required.

\medskip

It now follows from Proposition \ref{thm1} that every element of $G_A$ is a product of 32 conjugates of $g^{\pm 1}$. Thus, to prove the theorem, it will suffice to show that $\Aut(M/\cl^{d}(\emptyset))$ is a product of 3 conjugates of $H_1 = G_A$.

\medskip

By Existence, there is $A' \in \loc(A/\cl^d(\emptyset))$ with $A' \ind^d A$. So $H_2 = G_{A'}$ is a conjugate of $H_1$. Let $k \in \Aut(M/\cl^d(\emptyset))$. By Existence again, there is $f_1 \in H_1$ with $f_1A' \ind^d A, kA$. By Stationarity, there is $f_2 \in \Aut(M/f_1 A')$ with $f_2 \vert A = k \vert A$. Thus $f_2^{-1}k \in H_1$ and so $k \in f_2H_1$. But $f_2 \in f_1H_2f_1^{-1}$, so $k \in H_1H_2H_1$, as required.
\end{proof}

We now give a more natural interpretation of boundedness when $M$ is monodimensional. Note that the following does not require stationarity of $\ind^d$. 

\begin{proposition}\label{unbdd} Suppose $M$ is monodimensional and suppose $g \in G$ is bounded. Then there is $E \in \X$ such that $g(B) = B$ for all $B \in \X$ which contain $E$.
\end{proposition}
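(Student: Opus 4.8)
The plan is to contrapose: assuming $g$ is bounded, I want to produce a single $E \in \X$ that is fixed by $g$ along with every larger closed set. The bounded hypothesis, unwound through the definition, says that $g$ is \emph{not} unbounded over any $A \in \X$; in particular, for the empty-closure $A = \cl^d(\emptyset)$ (or any convenient starting point), there is some $A \subseteq C \in \X$ and some $b \in M$ basic over $C$ such that for \emph{every} $b' \in \loc(b/C)$ we have $b' \in \cl^d(C, gb')$, i.e. $gb' \nind^d_C b'$. Fix such a $C$ and such a basic orbit $D_0 = \orb(b/C)$. The idea is that $g$ ``pins down'' the whole basic orbit $D_0$ over $C$, and then monodimensionality propagates this to all of $M$.

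First I would record the key local consequence. Since $b$ is basic over $C$ and $b \in \cl^d(C, gb)$ with $b \notin C$, the exchange property for the pregeometry $\dim_C$ on the basic orbit forces $gb \in \cl^d(C, b)$ as well (indeed $d(gb/C,b) < d(gb/C)$ would follow from $d(b,gb/C) = d(gb/C)$, and basicness of $gb$ over $C$ — valid since $gb \in \loc(b/gC)$ and one can enlarge $C$ to contain $gC$ — then gives $gb \in \cl^d(C,b)$). So after replacing $C$ by $\cl^d(C \cup gC) \in \X$ if necessary — note $g$ is still bounded, and the orbit statement persists by Lemma \ref{monod}(1) applied over the larger set — I may assume $gC = C$, and then $gb \in \cl^d(C,b)$ for every $b$ in the basic orbit $D := D_0 \setminus C$. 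Hence $g$ maps $\cl^d(C, b)$ into itself for each single basic $b$, and by the pregeometry structure and finitary-ness of $\cl^d$, $g$ fixes setwise $\cl^d(C, D') $ for every finite $D' \subseteq D$, and therefore $g(\cl^d(C, D)) = \cl^d(C, D)$.

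Now invoke monodimensionality (Definition \ref{d35}) with the closed set $C$ and its basic $G_C$-orbit $D$: there is $C \subseteq E \in \X$ with $M = \cl^d(E, D)$. Set this $E$; I claim $g(B) = B$ for all $B \in \X$ with $E \subseteq B$. Since $g$ fixes $\cl^d(C,D)$ setwise and $E \supseteq C$, I would like $g$ to fix $E$ too — this needs a small extra argument, since monodimensionality only hands us \emph{some} such $E$, not necessarily a $g$-invariant one. The cleanest route: enlarge $E$ to $E' = \cl^d(E \cup gE) \in \X$; still $M = \cl^d(E', D)$ and still $E' \supseteq C$. Then for any $B \supseteq E'$ in $\X$, write $B = \cl^d(B)$; every element of $M$, in particular every element of $B$, lies in $\cl^d(E', D_{\text{fin}})$ for some finite $D_{\text{fin}} \subseteq D$, and $g$ sends $E'$ into $E'$ (by construction $gE \subseteq E'$, and we should arrange $g^{-1}E \subseteq E'$ too by one more symmetric enlargement, using that $g^{-1}$ is also bounded) and sends $\cl^d(C,D)$ to itself; a straightforward closure/monotonicity computation then gives $g(B) \subseteq B$ and $g^{-1}(B) \subseteq B$, i.e. $g(B) = B$. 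Renaming $E'$ as $E$ finishes the proof.

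The main obstacle is the bookkeeping in the last paragraph: monodimensionality is an existence statement producing \emph{a} spanning $E$, whereas we need $g$-invariance, so the real work is choosing $E$ (via successive finite enlargements by $g$- and $g^{-1}$-images, all still in $\X$ by finitary-ness of $\cl^d$) so that $g$ genuinely stabilizes it and every larger closed set. I expect no difficulty in the pregeometry/exchange step or the reduction to $gC = C$; those are immediate from the axioms for $d$ and the basic-orbit discussion already in the text. One should also double-check that the bounded hypothesis can indeed be applied ``starting from'' an arbitrary $A$ — but since $g$ bounded means $g$ is not unbounded over \emph{any} $A \in \X$, we may pick $A$ as small as we like and then enlarge, which is exactly what the argument does.
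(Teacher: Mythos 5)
Your overall strategy matches the paper's — use boundedness to pin down a basic orbit over some $C$, use monodimensionality to make the orbit $d$-span $M$, and deduce that $g$ stabilizes every large enough $d$-closed set — but the two steps you defer as ``bookkeeping'' are exactly where the real argument lives, and as written they do not go through. First, ``replace $C$ by $\cl^d(C\cup gC)$ and assume $gC=C$'' (and similarly ``enlarge $E$ to $\cl^d(E\cup gE)$ with one more symmetric enlargement'') is not a legitimate reduction: $g\bigl(\cl^d(C\cup gC)\bigr)=\cl^d(gC\cup g^2C)$, which need not be contained in $\cl^d(C\cup gC)$, so a single enlargement (or any finite chain of them) does not produce a $g$-invariant set in $\X$, and an infinite chain leaves $\X$. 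The paper closes this gap with a dimension count rather than an iteration: after arranging $\cl^d(C,\loc(b/C))=M$, it chooses finitely many $b_1,\dots,b_k$ in the basic orbit so that $g^{-1}C\subseteq E:=\cl^d(C,b_1,\dots,b_k)$; then the boundedness hypothesis (read as $g^{-1}b_i\in\cl^d(g^{-1}C,b_i)$, which is just $g^{-1}$ applied to $b_i\in\cl^d(C,gb_i)$, with no appeal to exchange) gives $g^{-1}E\subseteq E$, and $d(g^{-1}E)=d(E)$ forces $g^{-1}E=E$. That is the idea you are missing.

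Second, even granting a $g$-invariant $E$ with $M=\cl^d(E,D)$, your final step — every element of $B$ lies in some $\cl^d(E,D_{\rm fin})$, hence ``$g(B)\subseteq B$ by a closure/monotonicity computation'' — is a non sequitur: $g$ stabilizing $\cl^d(E,D_{\rm fin})$ does not send elements of $B$ back into $B$, since $\cl^d(E,D_{\rm fin})\not\subseteq B$. The paper instead realizes $B$ as an intersection: pick $b_1\in\loc(b/C)$ with $b_1\ind^d_C E$ (so $b_1$ is basic over $E$), show $g$ stabilizes $\cl^d(E,\bar b)$ for any tuple $\bar b$ from $\loc(b_1/E)$, choose such a $B_1=\cl^d(E,\bar b)\supseteq B$, and by Existence find $B_2\in\loc(B_1/B)$ with $B_2\ind^d_B B_1$, so $B_1\cap B_2=B$; since $g$ stabilizes both $B_1$ and $B_2$, it stabilizes $B$. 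Without this intersection argument your conclusion does not follow. (A minor point: your exchange-based derivation of $gb\in\cl^d(C,b)$ needs $gb$ to be basic over $C$, which requires its own check; the paper avoids this entirely by passing to $g^{-1}$.)
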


\begin{proof} 
There is $C \in \X$ and a basic $b$ over $C$ such that for all $b' \in \loc(b/C)$ we have $b' \in \cl^d(C, gb')$, so $g^{-1}b' \in \cl^d(g^{-1}C, b')$. By extending $C$ if necessary, we can assume by monodimensionality that $\cl^d(C, \loc(b/C)) = M$.  There are $b_1,\ldots, b_k \in \loc(b/C)$ with $g^{-1}C \subseteq \cl^d(C, b_1,\ldots, b_k) = E$. So 
$$g^{-1}E = \cl^d(g^{-1}C, g^{-1}b_1,\ldots, g^{-1}b_k) \subseteq \cl^d(g^{-1}C, b_1,\ldots, b_k) \subseteq E.$$
As $d(E) = d(g^{-1}E)$ we obtain $g^{-1}E = E$. Let $b_1 \in \loc(b/C)$ be such that $b_1\ind^d_C E$. Then $b_1$ is basic over $E$ and for all $b' \in \loc(b_1/E)$ we have that $g^{-1}$ stabilizes $\cl^d(E, b')$ (setwise) and therefore $g$ stabilizes it also. Note that this implies that if $\b$ is a tuple of elements from $\loc(b_1/E)$, then $g$ stabilizes $\cl^d(E, \b)$ setwise.

Now, given any $B \supseteq E$ in $\X$ we can find a tuple $\b$ of elements of $\loc(b_1/E)$ such that $B_1 = \cl^d(E, \b) \supseteq B$. Then (by Extension) we can find $B_2 \in \loc(B_1/B)$ with $B_2 \ind^d_B B_1$: in particular $B_1\cap B_2 = B$. By the previous paragraph, $g$ stabilizes both $B_1$ and $B_2$, so $gB = B$.
\end{proof}

\begin{definition}\label{d312} \rm We say that $g \in Aut(M)$ is \textit{$\cl^d$-bounded} if there is some $E \in \X$ such that $g$ stabilizes setwise all $B \in \X$ which contain $E$.
\end{definition}

It is easy to see that the $\cl^d$-bounded automorphisms form a normal subgroup of $\Aut(M)$. The following follows from the above two results and can be seen as a generalisation of Theorem 2 of \cite{L} (the almost strongly minimal case, where there is a strongly minimal set definable over the empty set). 

\begin{corollary} \label{cor39} Suppose $\ind^d$ is stationary and $A \in \X$ is such that there is a basic $\Aut(M/A)$-orbit $D$ with $\cl^d(A, D) = M$. Suppose $g \in \Aut(M/\cl^d(\emptyset))$ is not $\cl^d$-bounded. Then every element of $\Aut(M/\cl^d(\emptyset))$ is a product of 96 conjugates of $g^{\pm 1}$.\hfill $\Box$
\end{corollary}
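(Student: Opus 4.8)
The plan is to derive Corollary \ref{cor39} by combining Theorem \ref{thm2} with Proposition \ref{unbdd}, treating "not $\cl^d$-bounded" as the usable hypothesis in place of "unbounded". First I would observe that the hypothesis of Corollary \ref{cor39} — that there is a basic $\Aut(M/A)$-orbit $D$ with $\cl^d(A,D)=M$ — is exactly the hypothesis of Theorem \ref{thm2} (with the $G_A$-invariant set $D$ taken to be that basic orbit together with, if one likes, the elements of $A$). Moreover, this same hypothesis together with stationarity of $\ind^d$ implies via Lemma \ref{monod}(2) that $M$ is monodimensional, since $\cl^d(A,D)=M$ and $D$ is a basic $G_A$-orbit. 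So monodimensionality comes for free, which is what lets us invoke Proposition \ref{unbdd}.

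Next I would handle the gap between "unbounded" (the hypothesis of Theorem \ref{thm2}) and "not $\cl^d$-bounded" (the hypothesis of Corollary \ref{cor39}). Proposition \ref{unbdd} says that for monodimensional $M$, every bounded $g$ has some $E \in \X$ with $g(B)=B$ for all $B \supseteq E$ in $\X$ — that is, every bounded automorphism is $\cl^d$-bounded. Contrapositively, in the monodimensional case, any $g$ which is not $\cl^d$-bounded must be unbounded. Hence the given $g \in \Aut(M/\cl^d(\emptyset))$, being not $\cl^d$-bounded and $M$ being monodimensional, is unbounded in the sense of the definition preceding Proposition \ref{thm1}.

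Finally I would simply apply Theorem \ref{thm2}: with $\ind^d$ stationary, $A \in \X$ carrying the required $G_A$-invariant set $D$ (the basic orbit) with $\cl^d(D,A)=M$, and $g \in \Aut(M/\cl^d(\emptyset))$ unbounded, Theorem \ref{thm2} yields that every element of $\Aut(M/\cl^d(\emptyset))$ is a product of $96$ conjugates of $g^{\pm 1}$, which is precisely the conclusion. I do not anticipate a genuine obstacle here — the corollary is essentially a repackaging. The only point needing care is checking that the "$G_A$-invariant set $D$ where the elements of $D\setminus A$ are basic over $A$" in the statement of Theorem \ref{thm2} is correctly supplied by the "basic $\Aut(M/A)$-orbit $D$" of the corollary (recall $\Aut(M/A) = G_A$ in the paper's notation, and a basic orbit by definition consists of elements basic over $A$, which are in particular not in $A$), and that the invocation of Lemma \ref{monod}(2) is legitimate, i.e. that stationarity of $\ind^d$ is indeed among the hypotheses of the corollary — which it is.
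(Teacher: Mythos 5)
Your proposal is correct and is precisely the argument the paper has in mind: the paper states the corollary with no written proof, prefacing it with the remark that it "follows from the above two results" (Theorem \ref{thm2} and Proposition \ref{unbdd}), and your proposal supplies exactly the intended details — invoking Lemma \ref{monod}(2) to get monodimensionality from the hypothesis, then Proposition \ref{unbdd} (contrapositively) to pass from "not $\cl^d$-bounded" to "unbounded," and finally Theorem \ref{thm2} with the basic orbit $D$ serving as the required $G_A$-invariant set of elements basic over $A$.
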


\begin{example} \label{DCF} \rm Suppose $M$ is a countable, saturated differentially closed field of characteristic 0. If $a$ is a tuple of elements of $M$, let $d(a)$ denote the differential transcendence degree of $a$ over $\emptyset$. This is a dimension function on $M$ which satisfies the conditions of Assumption \ref{dass}. It follows from (\cite{konn}, Corollary 2.6) that $\ind^d$ is  stationary.  The elements of differential transcendence degree 1 form a single orbit $D$ under $G = \Aut(M/\cl^d(\emptyset))$ and clearly $\cl^d(D) = M$, so Corollary \ref{cor39} applies. By (\cite{konn}, Proposition 2.9), the only $\cl^d$-bounded automorphism of $M$ is the identity, so $\Aut(M/\cl^d(\emptyset))$ is a simple group. In fact, because we can use Proposition  \ref{thm1} with $A = \cl^d(\emptyset)$, if $1 \neq g \in G$, then every element of $G$ is a product of 16 conjugates of $g$. 

\end{example}

\section{The ab initio Hrushovski constructions}

\subsection{The structures} \label{Sec41}

The Hrushovski construction which originated in \cite{Hr} admits many extensions and variations and can be presented at various levels of generality. But to fix notation, we consider the following basic case and comment on generalizations later. The article \cite{W0} is a convenient general reference for these constructions.

Suppose $r \geq 2$ and  $m,n \geq 1$ are fixed coprime integers. We work with the class $\C$ of finite \textit{$r$-uniform hypergraphs}, that is, structures in a language with a single $r$-ary relation symbol $R(x_1,\ldots,x_r)$ whose interpretation is invariant under permutation of coordinates and satisfies $R(x_1,\ldots,x_r) \to \bigwedge_{i<j} (x_i \neq x_j)$.   If $ B \in \C$ consider the \textit{predimension} 
$$ \delta(B) = n\vert B\vert - m\vert R[B] \vert$$
where $R[B]$ denotes the set of hyperedges on $B$ (i.e $ \{ \{b_1,\ldots, b_r\} : B \models R(b_1,\ldots,b_r)\}$). For $A\subseteq B$, we write $A \leq B$ iff for all $A \subseteq B' \subseteq B$ we have $\delta(A) \leq \delta(B')$, and let $\C_0 = \{ B \in \C: \emptyset \leq B\}$. The following is standard (cf. (\cite{Hr}, Lemma 1), for example).

\begin{lemma}\label{411} Suppose $A, B \subseteq C \in \C$. Then: 
\begin{enumerate}
\item $\delta(A\cup B) \leq \delta(A)+\delta(B)-\delta(A\cap B)$;
\item if $A \leq B$ and $X \subseteq B$, then $A\cap X \leq X$;
\item if $A \leq B \leq C$, then $A \leq C$.
\end{enumerate}
\end{lemma}

We let $\bar{\C}_0$ be the set of structures all of whose finite substructure are in $\C_0$. If $C \subseteq B \in \bar{\C}_0$ we write $C \leq B$ iff $X\cap C \leq X$ for all finite $X \subseteq B$. (This agrees with what was previously defined, by the above lemma). If $A, B \subseteq_{fin} C \in \C_0$ then we define $\delta(A/B) = \delta(A\cup B)- \delta(B)$.  Note that this is equal to $\vert A\setminus B\vert - \vert R[A\cup B]\setminus R[B]\vert$ and this makes sense for arbitrary $B$ (allowing the value  $-\infty$, if necessary). Then $B \leq A\cup B$ iff $\delta(A' /B) \geq 0$ for all $A' \subseteq A$.

 The class $\bar{\C}_0$ has the following  amalgamation property: suppose $B, C \in \bar{\C}_0$ have a common substructure $A$ and $A \leq B$. Then the \textit{free amalgam} $F = B \coprod_A C$ of $B$ and $C$ over $A$,  consisting of the disjoint union of $B$ and $C$ over $A$ with only the relations on $B$ and on $C$, is in $\bar{\C}_0$ and $C \leq F$. Using this and a standard Fra\"{\i}ss\'e-style construction, we obtain the following well-known result, which is sometimes referred to as the \textit{ab initio} case of the Hrushovski construction:

\begin{theorem} \label{M0} There is a countable structure $M_0 \in \bar{\C}_0$, unique up to isomorphism,  having the properties: $M_0$ is a union of a chain of finite $\leq$-substructures;  if $X \leq M_0$ is finite and $X \leq A \in \C_0$, then there is an embedding $\alpha: A \to M_0$ which is the identity on $X$ and $\alpha(A) \leq M_0$. Moreover, if $A_1, A_2 \leq M_0$ are finite and $h : A_1 \to A_2$ is an isomorphism, then $h$ extends to an automorphism of $M_0$.\hfill$\Box$
\end{theorem}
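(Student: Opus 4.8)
The plan is to obtain $M_0$ by the standard Fra\"iss\'e-type construction carried out relative to the distinguished notion of strong embedding ($\leq$) rather than ordinary embedding, and then to read off uniqueness together with the homogeneity clause from a back-and-forth argument. The three ingredients this needs are all at hand: (a) the class $\C_0$ has, up to isomorphism, only countably many finite members --- immediate, since the signature is finite and relational and $R$ holds only of distinct tuples, so there are finitely many structures of each size; (b) $\leq$ is transitive (Lemma \ref{411}(3)) and restricts well to $\leq$-substructures of a fixed structure (Lemma \ref{411}(2)); and (c) the free amalgamation property for $\bar{\C}_0$ recorded just before the statement. Note that the joint embedding property is subsumed here, because $\emptyset \leq B$ for every $B \in \C_0$ by the very definition of $\C_0$, so it suffices to amalgamate over $\emptyset$.

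For existence, I would build $M_0$ as the union of an increasing chain $\emptyset = A_0 \leq A_1 \leq A_2 \leq \cdots$ of finite members of $\C_0$. Fix a bookkeeping enumeration of all tasks of the form ``an isomorphism type of a pair $X \leq B$ with $B \in \C_0$, together with an embedding of $X$ into the current $A_i$'', arranged so that every task recurs infinitely often. At stage $i$, if the current task presents $X \leq B$ and an embedding of $X$ onto some $X' \leq A_i$, set $A_{i+1} = A_i \coprod_{X'} B$: this lies in $\bar{\C}_0$ by the free amalgamation property, is finite hence in $\C_0$, and --- using $X' \leq A_i$ on one side and $X \leq B$ on the other --- satisfies both $B \leq A_{i+1}$ and $A_i \leq A_{i+1}$; otherwise set $A_{i+1} = A_i$. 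Then $M_0 = \bigcup_i A_i \in \bar{\C}_0$ is countable, and since each finite subset lies in some $A_j$, Lemma \ref{411}(2),(3) show each $A_i \leq M_0$, so $M_0$ is a union of a chain of finite $\leq$-substructures. For the extension property, given finite $X \leq M_0$ and $X \leq A \in \C_0$, pick $i$ with $X \subseteq A_i$; then $X \leq A_i$ by Lemma \ref{411}(2), and the task ``$X \leq A$ with the inclusion $X \hookrightarrow A_i$'' is handled at some stage $j \geq i$, producing an embedding $\alpha$ of $A$ into $A_{j+1}$ which is the identity on $X$ with $\alpha(A) \leq A_{j+1}$, and then $\alpha(A) \leq M_0$ by transitivity.

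For uniqueness and homogeneity I would first record the standard fact that every finite subset $Y$ of a structure $M \in \bar{\C}_0$ is contained in a \emph{smallest} finite $\leq$-substructure of $M$: among finite $Y \subseteq C \subseteq M$ choose one with $\delta(C)$ least and, among those, with $\vert C\vert$ least; submodularity (Lemma \ref{411}(1)) then shows $C \leq M$ and that $C$ sits inside every finite $\leq$-substructure of $M$ containing $Y$. Now let $M, M'$ be countable structures with the listed properties, let $A \leq M$, $A' \leq M'$ be finite and $h \colon A \to A'$ an isomorphism, and let $a \in M$. Take $B$ to be the smallest finite $\leq$-substructure of $M$ containing $Aa$; then $B \in \C_0$, $a \in B$, and $A \leq B$ (since $A \leq M$ and $A \subseteq B$). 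Transporting $B$ along $h$ to an abstract copy $B'$ with $A' \leq B'$ and applying the extension property of $M'$ gives an embedding of $B'$ into $M'$ that is the identity on $A'$ and has strong image, hence an extension of $h$ to an isomorphism of $\leq$-substructures whose domain contains $a$. Alternating the roles of $M$ and $M'$ and using that both are countable, a back-and-forth yields an isomorphism $M \to M'$; taking $M = M' = M_0$ and starting from an arbitrary isomorphism between finite $\leq$-substructures gives the homogeneity clause, and comparing $M_0$ with any other countable structure having the properties gives uniqueness up to isomorphism.

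I expect the only genuine work to be the bookkeeping in the existence step --- ensuring that \emph{every} strong configuration $X \leq A$ over \emph{every} finite $\leq$-substructure of $M_0$ is eventually realized, which forces each task to be scheduled infinitely often and forces one to track that the amalgamation keeps both $A_i \leq A_{i+1}$ and $B \leq A_{i+1}$ --- together with the self-sufficient-closure lemma that feeds the back-and-forth. The amalgamation itself is not an obstacle, as it is precisely the free amalgamation property stated before the theorem; this is why the result is ``well-known''.
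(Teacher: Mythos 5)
Correct, and this is precisely the standard Fra\"iss\'e-style construction relative to $\leq$-embeddings (using the free amalgamation property and the self-sufficient closure $\cl_0$) that the paper invokes without proof, since it cites the result as well-known. Your back-and-forth argument for uniqueness and $\leq$-homogeneity is the usual one and fills in exactly what the paper leaves implicit.
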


\noindent The structure $M_0$ is called the \textit{generic structure} for the class $(\C_0, \leq)$. We refer to the property in the `Moreover' statement  as \textit{$\leq$-homogeneity} of $M_0$. It is easy to see that every countable structure in $\bar{\C}_0$ can be embedded as a $\leq$-substructure of $M_0$. For the rest of this section, $M_0$ will denote the generic structure in Theorem \ref{M0}. 

As usual, we have two closure operations and a dimension function on $M_0$ (indeed, on any structure in $\bar{\C}_0$). If $X$ is a finite subset of $M_0$, there is a smallest  subset $Y$ with $X \subseteq Y \leq M_0$. This $Y$ is finite and we denote it by $\cl_0(X)$. The dimension $d(X)$ of $X$ (in $M_0$)  is defined to be $\delta(\cl_0(X))$. This is a dimension function on $M$ in the sense of Definition \ref{dfdef}. As in the previous section, the $d$-closure of $X$ is $\cl^d(X) = \{ a \in M_0 : d(X \cup \{a\}) = d(X)\}$ and in general, this will not be finite. We shall eventually show (Corollary \ref{39}) that the assumptions \ref{dass} hold for $(M_0; d)$ and $\ind^d$ is stationary.  Let $\X = \{\cl^d(X) : X \subseteq_{fin} M_0\}$.

As in the previous section, for tuples $a, b, c$ in $M_0$ we define $a\ind_b^d c$ to mean $d(a/b) = d(a/bc)$; similarly for sets in $\X$. This is not the same as non-forking independence. The following is well-known.

\begin{lemma} \label{421} If $A, B, C \in \X$ then $A \ind^d_B C$ if and only if the following three conditions hold: $\cl^d(AB) \cap \cl^d(BC) = B$; $\cl^d(AB)$, $\cl^d(BC)$ are freely amalgamated over $B$; and $\cl^d(AB) \cup \cl^d(BC) \leq M_0$.\hfill $\Box$
\end{lemma}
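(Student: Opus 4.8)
The plan is to push everything down to a computation with the predimension $\delta$ on finite self-sufficient sets. The first thing to record is that every member of $\X$ is in fact self-sufficient in $M_0$: for $X\subseteq_{fin} M_0$ the set $\cl^d(X)$ is the union of all finite $B$ with $X\subseteq B\leq M_0$ and $\delta(B)=\delta(\cl_0(X))$, and a submodularity argument using Lemma \ref{411} (in particular the fact that $B_1\cap B_2\leq M_0$ whenever $B_1,B_2\leq M_0$) shows this family is directed under inclusion; since a directed union of finite $\leq$-substructures is again a $\leq$-substructure, $\cl^d(X)\leq M_0$. With this in hand, and using that $\ind^d$ satisfies Compatibility, Monotonicity and Symmetry (noted before Assumption \ref{dass}), both sides of the asserted equivalence are unchanged on replacing $A$ by $\cl^d(AB)$ and $C$ by $\cl^d(BC)$. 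So I may assume $B\subseteq A$ and $B\subseteq C$ with $A,B,C$ self-sufficient; then $A\ind^d_B C$ amounts to $d(a/B)=d(a/C)$ for every finite tuple $a\subseteq A$, and from $B\leq M_0$ and $B\subseteq A$ one gets $B\leq A$ (Lemma \ref{411}(2)) and likewise $B\leq C$, so in the free amalgam $F=A\coprod_B C$ we have $A\leq F$ and $C\leq F$.

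Next I would invoke the standard description of $d(a/B')$ for self-sufficient $B'$ and finite $a$: there is a smallest finite $D\supseteq a$ with $D\cup B'\leq M_0$ (the self-sufficient closure of $a$ over $B'$), and $d(a/B')=\delta(D\cup B'/B')$. For the implication $(\Leftarrow)$, assume $A\cap C=B$, $A\cup C\leq M_0$, and that $A,C$ are freely amalgamated over $B$ (so $A\cup C\cong F$). Given finite $a\subseteq A$, let $D\supseteq a$ be the self-sufficient closure of $a$ over $B$; minimality of $D$ together with $A\leq M_0$ forces $D\subseteq A$. Since $B\subseteq C$ we get $D\cup C=(D\cup B)\cup C=(D\cup B)\coprod_B C$, which sits inside $F$ and, being the free amalgam over $B$ of the $\leq$-substructure $D\cup B\leq A$ with $C$, is itself $\leq$-embedded in $F\leq M_0$; a second minimality check shows the self-sufficient closures of $a$ over $B$ and over $C$ agree after adjoining $B$. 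Since $\delta(P/B)=\delta(P\coprod_B C/C)$ for any $P\supseteq B$ meeting $C$ trivially in the amalgam, we conclude $d(a/B)=\delta(D\cup B/B)=\delta(D\cup C/C)=d(a/C)$, giving $A\ind^d_B C$.

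For $(\Rightarrow)$, assume $A\ind^d_B C$. By Compatibility each $e\in A=\cl^d(AB)$ satisfies $e\ind^d_B C$, hence $d(e/B)=d(e/C)$; if moreover $e\in C$ then $d(e/C)=0$, so $d(e/B)=0$ and $e\in\cl^d(B)=B$. Thus $A\cap C=B$. For the other two conditions I argue contrapositively: a failure of free amalgamation of $A$ and $C$ over $B$, or of self-sufficiency of $A\cup C$ in $M_0$, is witnessed by finite self-sufficient $Z_A\subseteq A$ and $Z_C\subseteq C$ (with $Z_A\cap Z_C\subseteq B$ self-sufficient) for which $\delta(\cl_0(Z_A\cup Z_C))<\delta(Z_A)+\delta(Z_C)-\delta(Z_A\cap Z_C)$ — a strict loss coming either from a cross-hyperedge or from the self-sufficient closure leaving $A\cup C$. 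Taking $a$ to be $Z_A$ (in the cross-edge case one first enlarges $a$ to contain the $A$-vertices of the offending hyperedge), writing $D$ for the self-sufficient closure of $a$ over $B$ and $C'=\cl_0(Z_C)$, and using the submodular identity $\delta(D\cup C')=\delta(D)+\delta(C')-\delta(D\cap C')-m\cdot\vert R[D\cup C']\setminus(R[D]\cup R[C'])\vert$ together with $D\cap C'\leq M_0$ and $\delta(\cl_0(aC'))\leq\delta(D\cup C')$, one obtains $d(a/C)\leq d(a/C')\leq d(a/B)-m<d(a/B)$, contradicting independence. Hence all three conditions hold.

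The step I expect to be the main obstacle is precisely this bridge between $\delta$ — which is finitary and only well-behaved on self-sufficient sets — and $d$, which is defined on arbitrary, possibly infinite, $d$-closed sets: establishing self-sufficiency of the members of $\X$, pinning down $d(a/B)$ via self-sufficient closures, and (in $(\Rightarrow)$) converting an abstract failure of free amalgamation or of self-sufficiency of $A\cup C$ into a concrete finite tuple $a\subseteq A$ whose dimension over $C$ drops by at least $m$ relative to its dimension over $B$. The dual delicate point in $(\Leftarrow)$ is verifying that the self-sufficient closure of $a$ over $B$, enlarged by $B$, remains $\leq$-embedded when we pass from $B$ to $C$, i.e.\ that it does not escape the free amalgam $F$.
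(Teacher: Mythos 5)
The paper offers no proof of this lemma (it is flagged as ``well-known''), so there is nothing to compare against directly; your job is therefore to supply a self-contained argument, and the natural strategy is exactly the one you adopt: reduce to predimension calculations on finite $\leq$-substructures and read the three conditions off from the equality cases of a submodularity bound.

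Your reduction to the case $B\subseteq A,C$ with $A,B,C\leq M_0$ and your $(\Leftarrow)$ direction are essentially fine. The $(\Rightarrow)$ direction, however, has a genuine gap as written. You need $d(a/C')\leq d(a/B)-m$, and your chain gives $d(a/C')\leq \delta(D)-\delta(D\cap C')-m\cdot\lvert\text{extra}\rvert$; but you have not shown that $d(a/B)$ \emph{equals} $\delta(D)-\delta(D\cap C')$. Indeed $D$ is the self-sufficient closure of $a$ over $B$, not over the possibly smaller set $D\cap C'$, so a priori one only gets $d(a/B)\leq d(a/D\cap C')$ in the wrong direction, and some work is needed to pin $d(a/B)$ to a formula in $\delta(D)$ and $\delta(D\cap C')$. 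Separately, the case where free amalgamation holds but $A\cup C\not\leq M_0$ is not cleanly handled: there the extra $R$-edge count in your identity is zero, so the $-m$ does not materialize from a cross-edge, and you need a different source of strict drop (namely $\delta(\cl_0(Z_A\cup Z_C))<\delta(Z_A\cup Z_C)$, which gives a drop of at least $1$, not necessarily $m$ --- harmless, but not what you wrote).

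The argument becomes short and gap-free if you avoid the contrapositive and simply exhibit the chain of inequalities directly. After reducing to $B\subseteq A,C$ with everything $\leq M_0$ and $A\cap C=B$ already established, fix a finite tuple $a\subseteq A$; choose finite $B_0\leq M_0$, $B_0\subseteq B$ with $A_0:=\cl_0(aB_0)\subseteq A$, $A_0\cap B=B_0$ and $d(a/B)=\delta(A_0)-\delta(B_0)$, and finite $C_0\leq M_0$, $B_0\subseteq C_0\subseteq C$ with $A_0\cap C_0=B_0$ and $d(a/C)=\delta(\cl_0(A_0C_0))-\delta(C_0)$. Then
\[
d(a/C)=\delta(\cl_0(A_0C_0))-\delta(C_0)\leq \delta(A_0C_0)-\delta(C_0)\leq \delta(A_0)-\delta(B_0)=d(a/B),
\]
the first inequality because $\delta(\cl_0(X))\leq\delta(X)$ and the second by submodularity (Lemma \ref{411}(1)). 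Independence forces equality in both, which says precisely that $A_0C_0\leq M_0$ and that $A_0C_0$ is the free amalgam of $A_0$ and $C_0$ over $B_0$; letting $A_0,C_0$ exhaust $A,C$ gives the three conditions, and the converse reads off from the same display with equalities imposed. This replaces your case analysis and the $-m$ bookkeeping with a single two-line calculation, and it is the standard way this fact is proved.
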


\subsection{Extending the homogeneity} 

 We will show that if  $A_1, A_2 \in \X$ and $h : A_1 \to A_2$ is an isomorphism, then $h$ extends to an automorphism 
of $M_0$. 

We need the following notion from \cite{Hr}. Suppose $Z \subset Y \in \bar{\C}_0$ and $Y \setminus Z$ is finite. We say that the extension $Z\subset Y$ is \textit{simply algebraic} if $\delta(Y/Z) = 0$ and whenever $Z \subset Z_1 \subset Y$, then $\delta(Y/Z_1) < 0$.  So  $Z \leq Y$, but $Z_1 \not\leq Y$ for all $Z \subset Z_1 \subset Y$. We write \textit{sa} for simply algebraic. The extension is \textit{minimally} simply algebraic (\textit{msa}) if the extension $Z_0 \subset Z_0\cup(Y\setminus Z)$ is not simply algebraic for all proper subsets $Z_0$ of $Z$. In this case $Z$ is finite and more generally, if $Z \subset Y$ is simply algebraic, there is finite subset $Y_1$ of $Y$ which contains $Y \setminus Z$ and is such that $Y_1\cap Z \subset Y_1$ is msa. Moreover, $Y$ is the free amalgam of $Z$ and $Y_1$ over $Z_1 = Y_1 \cap Z$. (In fact, $Z_1$ consists of the points in $Z$ which are in some $R$-relation containing a point of $Y\setminus Z$.)  In this case, we say that $Y$ has \textit{base} $Z_1$ and \textit{type} $(Z_1, Y_1)$ over $Z$. 

If $A \leq M_0$ and $B \subseteq M_0$ is an sa extension of $A$, then $B \leq M_0$. Moreover, any collection $\{ B_i : i \in I\}$ of (distinct) sa extensions of $A$ in $M_0$ is in free amalgamation over $A$ and $\bigcup_{i \in I} B_i \leq M_0$ (Lemma 2 of \cite{Hr}). If $Z_1 \subseteq A$ and $Z_1 \subset Y_1$ is msa, then the \textit{multiplicity} $\m(Z_1,Y_1/A)$ is the number of distinct minimal extensions of $A$ of type $(Z_1,Y_1)$ in $M_0$. So this is the maximum cardinality of $\{B_i : i \in I\}$ where each $B_i$ is a sa extension of $A$ of type $(Z_1, Y_1)$. Note that $\cl^d(A) = A$ iff each such multiplicity is zero. Indeed, $\cl^d(A)$ is the union of all subsets of $M_0$ which can be obtained from $A$ by a finite chain of successive sa  extensions. The free amalgamation property for $\C_0$ shows that if $A$ is finite, then all multiplicities over $A$ are infinite.

\begin{definition} \rm Suppose $A_1, A_2 \leq M_0$ and $k : A_1 \to A_2$ is an isomorphism. We say that $k$ is \textit{potentially extendable} if for every $Z_1 \subseteq A_1$ and msa $Z_1 \subset Y_1$ we have $\m(Z_1,Y_1/A_1) = \m(Z_2, Y_2/A_2)$, where $Z_2 = k(Z_1)$, and $k \vert Z_1$ extends to an isomorphism between $Y_1$ and $Y_2$. 
\end{definition}

Evidently, if $k$ as above extends to an automorphism of $M_0$, then $k$ is potentially extendable. Moreover, there are isomorphisms $k : A_1 \to A_2$ with $A_i \leq M_0$ which are not potentially extendable. 

\begin{lemma}\label{pe} If $A_1, A_2 \leq M_0$ are such that $d(A_i)$ is finite and $k : A_1 \to A_2$ is potentially extendable, then $k$ can be extended to an automorphism of $M_0$.
\end{lemma}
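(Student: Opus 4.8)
The plan is to prove Lemma \ref{pe} by a back-and-forth argument, building the desired automorphism of $M_0$ as the union of an increasing chain of potentially extendable isomorphisms between finite $\leq$-substructures of $M_0$. Since $M_0$ is the union of a chain of finite $\leq$-substructures and every finite subset is contained in a finite $\leq$-substructure (namely its $\cl_0$), it suffices to show the following one-sided extension step: if $k : B_1 \to B_2$ is a potentially extendable isomorphism with $B_i \leq M_0$ finite, and $B_1 \subseteq C_1 \leq M_0$ with $C_1$ finite, then $k$ extends to a potentially extendable isomorphism $k' : C_1' \to C_2$ for some finite $C_1' \supseteq C_1$ with $C_1', C_2 \leq M_0$. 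Iterating this on both sides (enumerating $M_0$ and alternately absorbing the next point into the domain and into the codomain) produces the automorphism. Here I would want to reduce to the case where $d(B_i)$ is finite being preserved along the chain; since $C_1 \leq M_0$ is finite, $d(C_1) = \delta(C_1)$ is finite, so this is automatic.

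For the extension step, the first move is to replace $C_1$ by $\cl_0(B_1 \cup C_1)$, so we may assume $B_1 \leq C_1$. Now decompose the extension $B_1 \subseteq C_1$ as a tower: by the standard theory of the predimension, $C_1$ is obtained from $B_1$ by first freely adjoining $\delta(C_1/B_1)$ generic points (an extension which is `transcendental' and can always be copied generically in $M_0$ on the other side using $\leq$-homogeneity, Theorem \ref{M0}), and then by a finite chain of simply algebraic extensions. So the real content is: given a potentially extendable $k : B_1 \to B_2$ and a single simply algebraic (indeed msa, after passing to a minimal witness $Y_1$ with base $Z_1 \subseteq B_1$) extension $B_1 \subseteq B_1 \cup (Y_1 \setminus B_1)$ inside $M_0$, I want to find a copy of the corresponding extension $B_2 \subseteq B_2 \cup (Y_2 \setminus B_2)$ inside $M_0$ — where $Y_2$ is the image of $Y_1$ under the isomorphism $k\vert Z_1$ promised by potential extendability — such that the extended map $k'$ is \emph{again} potentially extendable. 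Existence of the copy in $M_0$ is guaranteed because potential extendability gives $\m(Z_2, Y_2 / B_2) = \m(Z_1, Y_1/B_1) \geq 1$.

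The main obstacle — and the heart of the lemma — is verifying that the extended isomorphism $k'$ remains potentially extendable, i.e. that adding this msa extension does not destroy equality of multiplicities. Concretely, for every $Z_1' \subseteq C_1' := B_1 \cup (Y_1\setminus B_1)$ and every msa extension $Z_1' \subset W_1$, I must show $\m(Z_1', W_1 / C_1') = \m(k'(Z_1'), W_2 / C_2')$. The strategy is a counting argument: the minimal sa extensions of $C_1'$ of a given type correspond, via the free amalgamation structure, to the minimal sa extensions of $B_1$ of related types, minus (or adjusted by) those that are `absorbed' into the copy $Y_1\setminus B_1$ we just glued on. Because $Y_1\setminus B_1$ and $Y_2\setminus B_2$ are isomorphic extensions (over $Z_1$, resp. $Z_2$) and $k$ was already potentially extendable, these adjustments match on the two sides. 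This is exactly the kind of bookkeeping carried out in Hrushovski's original argument (\cite{Hr}, around Lemma 3 and the analysis of algebraic closure), and I expect the proof to follow that template: set up the free amalgamation decomposition $C_1' = B_1 \coprod_{Z_1} Y_1$, observe that any sa extension of $C_1'$ decomposes compatibly, and push the multiplicity equality for $k$ through this decomposition. The one subtlety to watch is sa extensions of $C_1'$ whose base meets $Y_1\setminus B_1$ nontrivially; these are controlled because $Y_1$ itself is finite and its isomorphism type is preserved by $k'$ by construction.

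Finally, having the one-sided extension step, the back-and-forth is routine: enumerate $M_0 = \{a_0, a_1, \ldots\}$, start from $k$ (extending the given $k$ on $A_1$, which is potentially extendable since it extends to an automorphism, hence in particular after restricting to any finite $\leq$-substructure of $A_1$ — or rather, we apply the step to finite $\leq$-approximations of $A_1$ directly, using that $A_1 = \bigcup$ of finite $\leq$-substructures and potential extendability is inherited by such restrictions), and alternately ensure $a_i \in \dom$ and $a_i \in \im$ of the current map. The union is a bijection $M_0 \to M_0$ which preserves $R$ (since each piece does) and hence an automorphism extending the original $k$.
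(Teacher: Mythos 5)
Your strategy goes astray at the very first step: building the automorphism as a union of potentially extendable isomorphisms between \emph{finite} $\leq$-substructures loses exactly the information that makes the lemma non-trivial. As the paper observes just before the definition of potential extendability, the free amalgamation property of $\C_0$ implies that over any \emph{finite} $A \leq M_0$, every multiplicity $\m(Z,Y/A)$ is infinite. Consequently, any isomorphism between finite $\leq$-substructures is automatically potentially extendable, and your ``main obstacle'' — verifying that $k'$ stays pe after adjoining an msa extension — is vacuous in the finite setting. So your chain is never constrained by the pe hypothesis on the original $k$, and you are in effect only re-deriving the $\leq$-homogeneity of Theorem \ref{M0}. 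That does not suffice: for \emph{infinite} $A_i \leq M_0$ the lemma has genuine content (the paper notes explicitly that non-pe isomorphisms between $\leq$-substructures exist, and those do not extend). Your sketch gives no mechanism for ensuring that the union of generically chosen finite pieces agrees with the prescribed infinite map $k$ on all of $A_1$; the multiplicity count only guarantees \emph{some} copy on the $B_2$-side exists, not the one forced by $k$, and a priori the copies chosen for points of $M_0 \setminus A_1$ could land inside $A_2$ and collide with the forced images.

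The paper handles this with a reduction you have skipped. It first extends $k$ to an isomorphism $h : \cl^d(A_1) \to \cl^d(A_2)$ by iteratively adjoining \emph{all} simply algebraic extensions of $A_i$ in $M_0$ simultaneously — this is the step where the pe hypothesis on $k$ does real work, since the multiplicities $\m(Z_1,Y_1/A_1) = \m(Z_2,Y_2/A_2)$ are now possibly finite or zero and must be matched up bijectively. After $d$-closing, all multiplicities over $B_i = \cl^d(A_i)$ are zero, so $h$ is trivially pe, and the back-and-forth then proceeds by absorbing a new point $c$ into a finite $\leq$-piece $C = \cl_0(c,S)$ that is \emph{freely amalgamated} with $B_1$ over $S = C \cap B_1$. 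Potential extendability of the extension $h' : B_1 \cup C \to B_2 \cup D$ is verified by the counting argument you gestured at (bases contained in $B_1$ have multiplicity $0$; bases meeting $C\setminus B_1$ have infinite multiplicity because each point of the finite piece $C\setminus B_1$ lies in only finitely many relations of $B_1\cup C$). To repair your proof you would need to incorporate this initial reduction to $d$-closed domains, at which point the free-amalgamation structure takes care of both the pe bookkeeping and the compatibility with $k$ that your version leaves unaddressed.
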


\begin{proof} For $i = 1,2$, let $A_i'$ be the union of all sa extensions of $A_i$ in $M_0$. By the above, $A_i' \leq M_0$ and $A_i'$ is the free amalgam over $A$ of the various sa extensions. So by the condition on the multiplicities, $k$ extends to an isomorphism $k' : A_1' \to A_2'$. 

We claim that $k'$ is potentially extendable. Indeed, suppose $Z_1 \subseteq A_1'$ is finite and $Z_1 \subset Y_1$ is msa. If $Z_1 \subseteq A_1$ then by construction of $A_1'$ we have $\m(Z_1,Y_1/A_1') = 0$. So it will suffice to show that if $Z_1 \not\subseteq A_1$ then there are only finitely many copies of $Y_1$ over $Z_1$ in $A_1'$ (because it then follows that $\m(Z_1,Y_1/A_1')$ is infinite, and the same will be true for the corresponding msa extension of $k'(Z_1)$ over $A_2'$). 

To see this, note that as $A_1'$ is a free amalgam over $A_1$, any point in $A_1'\setminus A_1$ is contained in only finitely many instances of the relation $R$. But, in any msa extension, every point in the base is in some instance of the relation $R$ which also contains a non-base point. As any two msa extensions with the same base are disjoint over the base, it follows that $Z_1$ is the base of only finitely many msa extensions contained in $A_1'$.

This shows that $k'$ is potentially extendable, so we can repeat the argument and adjoin to  $A_1'$ all sa extensions of $A_1'$ and extend $k'$. Continuing in this way, we see that we can extend $k$ to $h : B_1 \to B_2$, where $B_i = \cl^d(A_i)$.  Evidently $h$ is potentially extendable (as all multiplicities over its domain and image are zero).

Now, suppose we have $c \in M_0$. It will be enough to show how to extend $h$ to a potentially extendable map which has $c$ in its domain (for then we can proceed by a back-and-forth argument to build up an automorphism extending the original $k$). We may assume $c \not\in B_1$. Let $S_0 \subseteq B$ be finite and such that $\cl^d(S_0) = B_1$ and let $S = \cl_0(c, S_0)\cap B_1$. Then $S \leq M_0$ is finite and $\cl_0(c, S) \cap \cl^d(S) = S$. Furthermore, $C = \cl_0(c, S)$ and $B_1$ are freely amalgamated over $S$, and $C \cup B_1 \leq M_0$. 

Let $T = h(S)$ and $T \leq D \in \C_0$ be such that $h \vert S$ extends to an isomorphism $C \to D$. We claim that we can find a copy $D_1$ of $D$ over $T$ such that $D_1, B_2$ are freely amalgamated over $T$ and $D_1 \cup B_2 \leq M_0$. In fact, take any copy $D_1 \leq M_0$ of $D$ over $T$ in $M_0$: this exists, by the characteristic property in Theorem \ref{M0}. We have $cl^d(T) \cap D_1 = T$ (because the same is true of $S \leq C$), so $D_1 \cap B_2 = T$. The other properties follow as $d(D_1/T) = d(D_1/B_2)$. 

So now we can extend $h$ to $h' : B_1 \cup C \to B_2 \cup D$ and to finish, we need to show that $h'$ is potentially extendable. But this is a similar argument to what was done previously. If $Z_1 \subset B_1\cup C$ and $Z_1 \subset Y_1$ is msa, then either $Z_1 \subseteq B_1$, in which case $\m(Z_1,Y_1/B_1) = 0$, or $Z_1 \cap (C\setminus B_1) \neq \emptyset$. But points in $C\setminus B_1$ are in only finitely many relations within $B_1 \cup C$, so in this latter case $B_1 \cup C$ contains only finitely many copies of $Y_1$ over $Z_1$. Thus $\m(Z_1,Y_1/B_1)$ is infinite. The same argument also holds with $B_2$ and $D_1$, so we are finished.
\end{proof}

\begin{lemma}\label{fe} Suppose $A, C \in \X$ are freely amalgamated over $B = A \cap C$ and $A \cup C \leq M_0$. Then for every msa $Z \subset Y$ with $Z \subseteq A \cup C$ and $Z \not\subseteq A$ and $Z\not\subseteq C$, there are only finitely many copies of $Y$ over $Z$ in $A\cup C$. In particular, $\m(Z,Y/A\cup C)$ is infinite.
\end{lemma}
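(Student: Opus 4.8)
The plan is to reduce the statement to the structural facts already established about free amalgamation and msa extensions. Let $Z \subset Y$ be msa with $Z \subseteq A \cup C$, $Z \not\subseteq A$, $Z \not\subseteq C$. Since $A$ and $C$ are freely amalgamated over $B = A \cap C$ and $A \cup C \leq M_0$, the key observation is that $Z$ contains points on both sides: pick $a \in Z \cap (A \setminus C)$ and $c \in Z \cap (C \setminus A)$. Now suppose $Y' \supseteq Z$ is a copy of $Y$ over $Z$ inside $A \cup C$. Because the only $R$-relations in $A \cup C$ are those inside $A$ and those inside $C$, any such $Y'$ must itself be a subset of $A$ or a subset of $C$ — I would argue this by considering the hyperedges of $Y'$: a hyperedge meeting both $A \setminus C$ and $C \setminus B$ would be a new relation not present in the free amalgam, contradiction; so each hyperedge lies wholly in $A$ or wholly in $C$, and since $Y'$ is connected enough through $Z$ (which straddles both sides) one deduces $Y' \subseteq A$ or $Y' \subseteq C$. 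But then $Z \subseteq Y' \subseteq A$ or $Z \subseteq Y' \subseteq C$, contradicting our choice of $Z$.

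Hence there are \emph{no} copies of $Y$ over $Z$ inside $A \cup C$ at all — in particular only finitely many. For the final clause, I would invoke the $\leq$-homogeneity of $M_0$ together with the characteristic extension property of Theorem \ref{M0}: since $A \cup C \leq M_0$ and $Z \subseteq A \cup C$ with $Z \subset Y$ an msa (hence $Z \leq Y$) extension, there are embeddings of $Y$ over $Z$ into $M_0$ with image $\leq M_0$, and by Lemma 2 of \cite{Hr} any family of distinct sa extensions of $A \cup C$ of type $(Z, Y)$ is in free amalgamation over $A \cup C$ with union $\leq M_0$; the free amalgamation property for $\C_0$ then lets us adjoin arbitrarily many such copies, so $\m(Z, Y/ A \cup C)$ is infinite.

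The subtle point — and the step I expect to require the most care — is the claim that a copy $Y'$ of $Y$ over $Z$ lying in $A \cup C$ must be entirely contained in $A$ or entirely in $C$. This is where the hypothesis that $Z$ straddles both sides is used together with the structure of the free amalgam: one needs to rule out the possibility that $Y'$ splits non-trivially across $A$ and $C$. I would handle this by noting that in the free amalgam $A \coprod_B C$, the set $A \cup C$ with $Z \leq$ it behaves so that $Z$ is not $\leq (A\cup C)$-contained in any proper intermediate set that would be needed; more concretely, if $Y'$ met both $A \setminus C$ and $C \setminus B$ nontrivially, then writing $Y' = (Y' \cap A) \cup (Y' \cap C)$ and using that no hyperedge of $A \cup C$ crosses the partition, one gets $\delta(Y') = \delta(Y' \cap A) + \delta(Y' \cap C) - \delta(Y' \cap B) = \delta((Y'\cap A)/(Y'\cap B)) + \delta(Y'\cap C) \geq \delta(Y' \cap C)$ by $B \leq A$ (applied after intersecting), which contradicts the msa condition that $\delta(Z_1 \cup (Y'\setminus Z)) < \delta(Y')$ for intermediate $Z_1$ — here taking $Z_1 = Z$ intersected appropriately forces a positive relative predimension, the contradiction sought. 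Carrying out this computation carefully, using parts (1) and (2) of Lemma \ref{411}, is the heart of the argument.
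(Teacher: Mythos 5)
The central claim of your argument --- that there are \emph{no} copies of $Y$ over $Z$ inside $A\cup C$ --- is false, so the proof does not go through (and note that the lemma only asserts finitely many, not zero). The connectivity argument you sketch overlooks that the points of $Y'\setminus Z$ are free to lie in $B=A\cap C$. When this happens, the hyperedges joining $a\in Z\cap(A\setminus C)$ to points of $Y'\setminus Z$ can sit entirely inside $A$, and the hyperedges joining $c\in Z\cap(C\setminus A)$ to points of $Y'\setminus Z$ can sit entirely inside $C$, with no hyperedge crossing the partition and nevertheless $Y'\not\subseteq A$ and $Y'\not\subseteq C$. Concretely, take $r=2$, $n=2$, $m=1$; pick $a\in A\setminus C$, $c\in C\setminus A$, $y\in B$ with $a\sim y$ (an edge of $A$) and $c\sim y$ (an edge of $C$). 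Then $Z=\{a,c\}\subset Y=\{a,c,y\}$ is msa ($\delta(Y/Z)=0$, and $\delta(\{a,y\}/\{a\})=1\neq 0$, etc.), $Y\subseteq A\cup C$, yet $Y\not\subseteq A$ and $Y\not\subseteq C$. Your fallback predimension computation, $\delta(Y')=\delta(Y'\cap A)+\delta(Y'\cap C)-\delta(Y'\cap B)\geq \delta(Y'\cap C)$, is a correct inequality, but it contradicts nothing: the msa condition constrains $\delta(Y'/Z_1)$ for proper intermediate $Z\subset Z_1\subset Y'$, and the inequality you derive does not engage with that. No contradiction is reached.

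What the paper actually does is cite the proof of Hrushovski's algebraic amalgamation lemma (Lemma~3 of \cite{Hr}), which shows that in a free amalgam $A\coprod_B C$ there are at most $\delta(Z)$ copies of $Y$ over $Z$ contained in $A\cup C$ when $Z$ is not contained in either side. The real content is this quantitative counting bound, which does not follow from the qualitative free-amalgamation observation you make. Your concluding paragraph --- deducing infinitude of $\m(Z,Y/A\cup C)$ from finiteness of the copies inside $A\cup C$ via the extension property of Theorem~\ref{M0} and Lemma~2 of \cite{Hr} --- is in the right spirit and matches the intended deduction; it is the finiteness step that needs replacing by the actual Hrushovski counting argument.
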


\begin{proof}  The proof of Hrushovski's algebraic amalgamation lemma (Lem\-ma 3 of \cite{Hr}) shows that there are at most $\delta(Z)$ copies of $Y$ over $Z$ which are contained in $A \cup C$. 
\end{proof}

\begin{corollary} \label{cor1} We have the following additional homogeneity properties of $M_0$.
\begin{enumerate}
\item ($d$-homogeneity:\/) Suppose $A_1, A_2 \in \X$ and $h : A_1 \to A_2$ is an isomorphism. Then $h$ extends to an automorphism of $M_0$.
\item ($d$-stationarity:\/) Suppose $A_1, A_2, C \in \X$. Suppose that for each $i$ we have that $A_i \cup C \leq M_0$ and $A_i$, $C$ are freely amalgamated over $B = A_i \cap C$. If $h : A_1 \to A_2$ is an isomorphism which is the identity on $B$, then $h$ extends to an automorphism of $M_0$ which fixes every element of $C$ pointwise.
\end{enumerate}
\end{corollary}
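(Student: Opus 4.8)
The plan is to deduce both statements from Lemma~\ref{pe}, by exhibiting in each case a potentially extendable isomorphism between finite-dimensional $\le$-substructures of $M_0$ that restricts to the required map.

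For part~(1): if $A \in \X$, say $A = \cl^d(X)$ with $X$ finite, then $d(A) = d(X)$ is finite and $A \le M_0$ --- $A$ is a union of an increasing chain of finite $\le$-substructures of $M_0$ (obtained from $\cl_0(X)$ by repeatedly adjoining all $sa$ extensions), and such a union is again $\le M_0$. Also $\cl^d(A) = A$, so every multiplicity $\m(Z_1,Y_1/A)$ with $Z_1 \subseteq A$ and $Z_1 \subset Y_1$ msa is zero. So, given an isomorphism $h : A_1 \to A_2$ of elements of $\X$, I would verify potential extendability as follows: for $Z_1 \subseteq A_1$ and msa $Z_1 \subset Y_1$, let $Y_2$ be the pushout of $h\vert Z_1 : Z_1 \to Z_2 := h(Z_1)$ along $Z_1 \subset Y_1$; then $h\vert Z_1$ extends to an isomorphism $Y_1 \to Y_2$ and $\m(Z_1,Y_1/A_1) = 0 = \m(Z_2,Y_2/A_2)$ since $A_1, A_2$ are $d$-closed. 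Lemma~\ref{pe} then gives the desired automorphism.

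For part~(2): since $A_i$ and $C$ are freely amalgamated over $B = A_i \cap C$ and $h$ is the identity on $B$, the map $\h : A_1 \cup C \to A_2 \cup C$ with $\h\vert A_1 = h$ and $\h\vert C = \id_C$ is a well-defined isomorphism of structures; by hypothesis $A_i \cup C \le M_0$, and $d(A_1 \cup C) \le d(A_1) + d(C)$ is finite, so it suffices to check that $\h$ is potentially extendable. Fix $Z_1 \subseteq A_1 \cup C$ and an msa extension $Z_1 \subset Y_1$, and let $Y_2$ be the pushout of $\h\vert Z_1$ along $Z_1 \subset Y_1$, so $Z_2 = \h(Z_1) \subseteq A_2 \cup C$. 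If $Z_1 \subseteq A_1$ --- the case $Z_1 \subseteq C$ being symmetric, as $\h$ fixes $C$ pointwise --- then, since in the free amalgam there are no relations between $C \setminus B$ and $A_1 \setminus B$, any $sa$ extension of $A_1 \cup C$ of type $(Z_1, Y_1)$ has its new relations incident to $A_1 \cup C$ only through $Z_1 \subseteq A_1$, and is hence also an $sa$ extension of $A_1$ of type $(Z_1, Y_1)$; as $A_1$ is $d$-closed, this forces $\m(Z_1,Y_1/A_1 \cup C) = 0$, and likewise $\m(Z_2,Y_2/A_2 \cup C) = 0$. If instead $Z_1 \not\subseteq A_1$ and $Z_1 \not\subseteq C$, then $Z_2$ satisfies $Z_2 \not\subseteq A_2$ and $Z_2 \not\subseteq C$, so Lemma~\ref{fe} gives $\m(Z_1,Y_1/A_1 \cup C) = \infty = \m(Z_2,Y_2/A_2 \cup C)$. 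In all cases the two multiplicities coincide, so $\h$ is potentially extendable; Lemma~\ref{pe} extends it to an automorphism of $M_0$ restricting to $h$ on $A_1$ and to the identity on $C$.

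The main point is the case split in part~(2): away from the two factors $A_i$ and $C$ the multiplicities are forced to be infinite by Hrushovski's algebraic amalgamation lemma (here Lemma~\ref{fe}), while over either factor $d$-closedness forces them to vanish --- so $\h$ is potentially extendable without $h$ having to match up any algebraic data beyond being an isomorphism of the two factors. The supporting facts ($d$-closed sets are $\le$-closed and finite-dimensional; an $sa$ extension of $A_1 \cup C$ with base inside $A_1$ is an $sa$ extension of $A_1$) are routine and of the same nature as arguments already made in the proof of Lemma~\ref{pe}.
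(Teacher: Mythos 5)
Your proof is correct and follows essentially the same route as the paper: part (1) uses that $d$-closed sets have all multiplicities zero, and part (2) forms the map $h \cup \id_C$ and verifies potential extendability via Lemma~\ref{fe} together with $d$-closedness, then applies Lemma~\ref{pe}. Your case analysis in part (2) (separating $Z_1 \subseteq A_1$, $Z_1 \subseteq C$, and the cross case covered by Lemma~\ref{fe}) makes explicit what the paper's terse invocation of Lemma~\ref{fe} leaves implicit, but it is the same argument.
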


\begin{proof} (1) As the $A_i$ are $d$-closed, $h$ is potentially extendable. So by Lemma \ref{pe}, it extends to an automorphism of $M_0$.

(2) Let $k : A_1 \cup C \to A_2 \cup C$ be the union of $h$ with the identity map on $C$. As $A_i, C$ are freely amalgamated over their intersection $B$ (for $i = 1,2$), this is an isomorphism. By Lemma \ref{fe}, it is potentially extendable. So by Lemma \ref{pe}, it extends to an automorphism of $M_0$.
\end{proof}

\begin{corollary} \label{39} The dimension function $d$ on $M_0$ satisfies Assumption \ref{dass} and the relation $\ind^d$ is  stationary.
\end{corollary}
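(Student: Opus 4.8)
The statement bundles four assertions: that $d$ is a non-zero integer-valued dimension function, that $\cl^d$ subsumes definable closure, that $\ind^d$ satisfies Existence, and that $\ind^d$ is stationary. The plan is to dispose of the first and last quickly, derive ``subsumes definable closure'' from Existence, and concentrate the real work on Existence.

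First, $d$ is a dimension function in the sense of Definition~\ref{dfdef} by the discussion in Section~\ref{Sec41}, and it is non-zero: applying the extension property of $M_0$ (Theorem~\ref{M0}) over $\emptyset$ to the one-point structure (which lies in $\C_0$) produces $a\in M_0$ with $\{a\}\leq M_0$, so that $d(\{a\})=\delta(\{a\})=n\geq 1$. For Stationarity I would observe that it is simply a restatement of $d$-stationarity via Lemma~\ref{421}. Suppose $A_1,A_2,B,C\in\X$ with $B\subseteq A_i$ and $A_i\ind^d_B C$, and $h\colon A_1\to A_2$ is the identity on $B$; that $h\in\F$ is automatic from $d$-homogeneity (Corollary~\ref{cor1}(1)). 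Put $C^*=\cl^d(B\cup C)$. Since $B\subseteq A_i$ and $A_i$ is $d$-closed we have $\cl^d(A_i\cup B)=A_i$, so by Lemma~\ref{421} the hypothesis $A_i\ind^d_B C$ says precisely that $A_i\cap C^*=B$, that $A_i$ and $C^*$ are freely amalgamated over $B$, and that $A_i\cup C^*\leq M_0$ --- exactly the hypotheses of Corollary~\ref{cor1}(2) for the triple $(A_1,A_2,C^*)$. Thus $h$ extends to an automorphism $k$ of $M_0$ fixing $C^*\supseteq C$ pointwise, and restricting $k$ to $\cl^d(A_1\cup C)\in\X$ gives the required element of $\F$ containing $h\cup\id_C$.

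For Existence: given $A,B,C\in\X$ we want $g\in G_B$ with $gA\ind^d_B C$. I would first use Compatibility and Monotonicity (clauses (1) and (3) of Definition~\ref{sir}, which $\ind^d$ satisfies) to replace $A$ by $\cl^d(A\cup B)$ and $C$ by $\cl^d(B\cup C)$, reducing to the case $B\subseteq A\cap C$; then $B$ is $d$-closed and $B\leq A$, $B\leq C$. Since any such $g$ fixes $B$, the set $gA$ will be $d$-closed and contain $B$, so by Lemma~\ref{421} it is enough to find $g\in G_B$ with $gA\cap C=B$, with $gA$ and $C$ freely amalgamated over $B$, and $gA\cup C\leq M_0$; and by $d$-homogeneity (Corollary~\ref{cor1}(1)) it suffices in turn to exhibit an element $A'\in\X$ together with an isomorphism $A\to A'$ that is the identity on $B$, such that $A'\cap C=B$, such that $A'$ and $C$ are freely amalgamated over $B$, and $A'\cup C\leq M_0$ (the isomorphism then extends to the desired $g$, which automatically fixes $B$).

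To produce such an $A'$, I would take $A'$ to be the $d$-closure in $M_0$ of a carefully chosen finite $\leq$-substructure: then $A'$ is $d$-closed by construction, and it is isomorphic to $A$ by genericity of $M_0$ over finite $\leq$-substructures (which forces the $d$-closure of a finite $\leq$-substructure to depend only on its isomorphism type). The work is to choose the finite substructure so that $A'$ contains $B$ and $A'$, $C$ are freely amalgamated over $B$ with $A'\cup C\leq M_0$; this is where the free amalgamation property of $\bar{\C}_0$ and Hrushovski's algebraic amalgamation lemma enter, in the spirit of the proofs of Lemmas~\ref{pe} and~\ref{fe}, the extra ingredient being that $B$ is $d$-closed (so that no simply algebraic extension of $A'$ can have base inside $B$). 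I expect this to be the only genuinely technical step --- the main obstacle --- everything else reducing to the results of this section. Finally, to see that $\cl^d$ subsumes definable closure: given $A=\cl^d(X)\in\X$ and $a\in M_0$ fixed by $G_A$, suppose $a\notin A$ and put $A_1=\cl^d(A\cup\{a\})\in\X$; applying Existence to the triple $(A_1,A,A_1)$ yields $g\in G_A$ with $gA_1\ind^d_A A_1$, whence (using $\cl^d(gA_1\cup A)=gA_1$, as $A\subseteq A_1$ and $g$ fixes $A$) Lemma~\ref{421} gives $gA_1\cap A_1=A$; but $a=ga\in gA_1\cap A_1=A$, a contradiction, so $a\in A$.
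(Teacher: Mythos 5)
Your overall plan coincides with the paper's: verify Existence, derive ``subsumes definable closure'' from Existence, and get Stationarity from Corollary~\ref{cor1}(2) together with Lemma~\ref{421}. Your Stationarity and subsumes-definable-closure arguments are correct and essentially those of the paper (the paper simply takes $C=A$ in Existence and observes that any $a\in A\setminus B$ is moved by $G_B$; you run the equivalent argument via $A_1=\cl^d(A\cup\{a\})$, which works just as well).

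The gap is in Existence, which you correctly reduce to exhibiting $A'\in\X$ with $A'\cong A$ over $B$, $A'\cap C=B$, $A'$ and $C$ freely amalgamated over $B$, and $A'\cup C\leq M_0$, but then do not construct: you write that you ``would take $A'$ to be the $d$-closure in $M_0$ of a carefully chosen finite $\leq$-substructure'' and defer the choice to ``the free amalgamation property of $\bar{\C}_0$ and Hrushovski's algebraic amalgamation lemma \dots\ in the spirit of Lemmas~\ref{pe} and~\ref{fe}'', explicitly flagging this as ``the main obstacle''. The paper fills this in a much more direct way that avoids any msa-multiplicity bookkeeping: form the free amalgam $F=A\coprod_B C$, which lies in $\bar{\C}_0$ with $C\leq F$ by the amalgamation property recalled before Theorem~\ref{M0}; since every countable structure in $\bar{\C}_0$ embeds in $M_0$ as a $\leq$-substructure (and this can be arranged over the given copy of $C$ by back-and-forth using the extension property), one may assume $F\leq M_0$, and takes $A'$ to be the copy of $A$ inside $F$. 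The three conditions you list then hold by the very definition of the free amalgam, so $A'\ind^d_B C$ by Lemma~\ref{421}, and $h:A\to A'$ extends to $g\in G_B$ by $d$-homogeneity. In short, the step you identified as the hard core is resolved by a one-line free-amalgam argument rather than by the delicate sa-extension analysis you were anticipating.
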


\begin{proof} First, we verify the Existence property. Given $A, B, C \in \X$ we need to show that there is $g \in G_B$ with $gA \ind^d_B C$. By taking $d$-closures over $B$, we may assume that $B \subseteq A, C$. Let $F$ be the free amalgam of $A, C$ over $B$ and let $A'$ denote the copy of $A$ inside $F$. So there is an isomorphism  $h : A \to A'$  which is the identity on $B$. By the construction of $M_0$ we can assume that $F \leq M_0$. Then $A' \ind^d_B C$ and $h$ extends to an automorphism $g$ of $M_0$ by $d$-homogeneity (Corollary \ref{cor1}). Note that in this, we can take $C = A$ and so if $a \in A \setminus B$, then $a$ is not fixed by $\Aut(M_0/B)$. Thus, $\cl^d$ subsumes definable closure. 

Finally, we note that Corollary \ref{cor1} and Lemma \ref{421} give the stationarity of $\ind^d$.
\end{proof}

\subsection{Bounded automorphisms} We shall show that, under a mild restriction on the parameters $n, m, r$, the structure $M_0$ in Theorem \ref{M0} has no non-trivial bounded automorphisms. To see that some restriction is necessary, consider the case where $r = 2$ and $n = m = 1$. Then $M_0$ is a graph each of whose connected components consists of an infinite tree with infinite valency, or a single cycle with a collection of such trees attached. Points in the first type of component have $d$-dimension 1, and those in the second type form the $d$-closure of the empty set. It is clear that there are non-trivial automorphisms which stabilise each component (and fix every element in $\cl^d(\emptyset)$), and these are obviously bounded. 

For the rest of this section we assume that  $n,m $ are coprime, if $r = 2$, then $n > m$ and if $r \geq 3$, then $n \geq m$. The proof that there are no bounded automorphisms of $M_0$ uses a technical result, Lemma \ref{48}, whose proof in the case $m > 1$ is surprisingly delicate and makes use of some well-known properties of \textit{Beatty sequences} via the following lemma.

\begin{lemma} \label{4100} Suppose $\ell, b \in \Z$ and $0 < \ell < b$. There is a sequence $(a_i)_{i \in \Z}$ with $a_i \in \{0,1\}$ having the following properties:
\begin{enumerate}
\item $a_{i+b} = a_i$ for all $i \in \Z$;
\item for all $i\in \Z$, $\sum_{i+1 \leq j \leq i+b} a_j = \ell$;
\item for all $i, s\in \Z$ with $s > 0$ we have
\[\frac{1}{s}(-1 +\sum_{i+1 \leq j \leq i+s} a_j ) \leq \frac{\ell}{b}.\]
\end{enumerate}
\end{lemma}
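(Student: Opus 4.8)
The plan is to construct the sequence explicitly from a \emph{Beatty-type} (equivalently, Sturmian) pattern: set $a_i = \lfloor (i+1)\ell/b \rfloor - \lfloor i\ell/b \rfloor$ for $i \in \Z$. Each $a_i$ is either $0$ or $1$ since $0 < \ell < b$, and periodicity with period $b$ (property (1)) is immediate because $\lfloor (i+b)\ell/b\rfloor = \lfloor i\ell/b\rfloor + \ell$. Property (2) is then the telescoping identity $\sum_{i+1\le j\le i+b} a_j = \lfloor (i+b)\ell/b\rfloor - \lfloor i\ell/b\rfloor = \ell$, valid for every $i$ by the same shift relation.

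The substantive point is property (3). First note that by telescoping again, for $s>0$,
\[ \sum_{i+1\le j\le i+s} a_j = \Big\lfloor \frac{(i+s)\ell}{b}\Big\rfloor - \Big\lfloor \frac{i\ell}{b}\Big\rfloor. \]
Writing $\lfloor x \rfloor = x - \{x\}$, this equals $\frac{s\ell}{b} - \{(i+s)\ell/b\} + \{i\ell/b\}$, so
\[ -1 + \sum_{i+1\le j\le i+s} a_j = \frac{s\ell}{b} - 1 - \{(i+s)\ell/b\} + \{i\ell/b\} \le \frac{s\ell}{b} + \big(\{i\ell/b\} - 1\big) < \frac{s\ell}{b}, \]
using $\{(i+s)\ell/b\}\ge 0$ and $\{i\ell/b\}<1$. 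Dividing by $s>0$ gives exactly the claimed inequality $\frac1s(-1+\sum_{i+1\le j\le i+s}a_j)\le \ell/b$. In fact this shows the inequality is strict, but the weak form is all that is needed.

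I do not expect any real obstacle here; the only care required is the bookkeeping with floors and fractional parts and making sure the index shifts line up. (An alternative, if one prefers to avoid fractional parts, is to argue directly that the partial sums $\sum_{i+1\le j\le i+s} a_j$ never exceed $\lceil s\ell/b\rceil$, which for $s$ not a multiple of $b$ is at most $s\ell/b$ rounded up, hence $-1+\sum \le \lceil s\ell/b\rceil - 1 \le s\ell/b$ when $b \nmid s$, and when $b\mid s$ the sum is exactly $s\ell/b$ by property (2) so $-1+\sum < s\ell/b$; either way the bound holds.) This is why the lemma is phrased in terms of Beatty sequences: the pattern $(a_i)$ is the difference sequence of the Beatty sequence $\lfloor i\ell/b\rfloor$, and property (3) is the standard fact that a Sturmian word of slope $\ell/b$ has all its factor-sums bounded above by the slope.
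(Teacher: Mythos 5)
Your proof is correct and follows essentially the same construction as the paper: the sequence is the difference sequence of the Beatty sequence $\lfloor i\ell/b\rfloor$ (your indexing is shifted by one, which is immaterial), and property (3) is established by the same telescoping-plus-floor-bound argument. One small improvement: you obtain (2) directly by telescoping over a full period, whereas the paper derives it slightly more circuitously from the Ces\`aro limit of the partial averages together with periodicity.
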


\begin{proof}
Let $\theta = \ell/b$ and note that $0 < \theta < 1$. The Beatty sequence $(\beta_i(\theta))_{i \in \Z}$ is defined as follows. For $ i \in \Z$ let 
\[ \beta_i(\theta) = \floor{i\theta}\]
(where $\floor{x}$ is the largest integer $\leq x$). Let
\[ a_i = \beta_i(\theta) - \beta_{i-1}(\theta).\]
It is easy to see that $a_i \in \{0,1\}$ and $a_{i+b} = a_i$. For part (3) of the Lemma, note that 
\begin{multline*}
\frac{1}{s}(-1 +\sum_{i+1 \leq j \leq i+s} a_j ) = \frac{1}{s}(\beta_{i+s}(\theta) - \beta_i(\theta) -1)\\
= \frac{1}{s}(\floor{(i+s)\theta}- \floor{i\theta} - 1) \leq \frac{i+s}{s}\theta - \frac{\floor{i\theta}+1}{s} \\
< \frac{i+s}{s}\theta - \frac{i\theta}{s} = \theta.
\end{multline*}
A similar calculation shows that
\[ \frac{1}{s}(1 +\sum_{i+1 \leq j \leq i+s} a_j ) > \theta.\]
Thus, for all $i \in \Z$, we have $\frac{1}{s}\sum_{i+1 \leq j \leq i+s} a_j \to \theta$ as $s \to \infty$. The periodicity in (1) then implies (2).\end{proof}

We now state the technical lemma.

\begin{lemma} \label{48}There is $X \subseteq Y \in \C_0$ such that:
\begin{enumerate}
\item $\delta(Y/X) = -1$ and $\vert X \vert \geq 2$;
\item if $U \subseteq Y$ and $X \not\subseteq U$, then $U \cap X \leq U$;
\item if $X \subseteq Z \subset Y$, then $\delta(Z/X) \geq 0$.
\end{enumerate}
\end{lemma}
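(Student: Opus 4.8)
The plan is to exhibit the pair $X \subseteq Y$ explicitly, treating separately the cases $m=1$ and $m \geq 2$; note that $m \geq 2$ forces $n > m$, since $n = m$ would give $\gcd(m,n) = m \geq 2$. When $m = 1$ I would take $Y = X \cup \{y\}$, where $X$ consists of $n+1$ points $x_1, \dots, x_{n+1}$ together with $r-2$ auxiliary points $z_1, \dots, z_{r-2}$ (none when $r = 2$), with $R[Y] = \{\, \{z_1,\dots,z_{r-2},y,x_i\} : 1 \leq i \leq n+1 \,\}$. Then $\delta(Y/X) = n - m(n+1) = -1$ and $|X| = n+1+(r-2) \geq 2$, while $R[X] = \emptyset$, any subset omitting some $z_k$ or $y$ carries no hyperedge, and the only $Z$ with $X \subseteq Z \subsetneq Y$ is $X$ itself; so conditions (1)--(3) and $Y \in \C_0$ are immediate.

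For $m \geq 2$ (so $n > m$) I would build a cyclic configuration governed by Lemma~\ref{4100}. Set $\rho = n \bmod m$ and $\gamma = \floor{n/m} - 1$, so $1 \leq \rho < m$ and $\gamma \geq 0$, and let $(a_i)_{i \in \Z}$ be the sequence of Lemma~\ref{4100} with $\ell = \rho$, $b = m$, of slope $\theta = \rho/m \in (0,1)$. Choose an integer $b \geq 3$ with $nb \equiv -1 \pmod m$, large enough that $q := \lceil b\theta \rceil \geq 2$ (such $b$ form a single residue class mod $m$). Since $b\rho \equiv -1 \pmod m$ we have $b\theta \notin \Z$, so by Lemma~\ref{4100}(3) and the dual estimate in its proof every sum of $b$ consecutive terms $a_i$ lies in $\{\floor{b\theta}, \lceil b\theta\rceil\}$; as this sum changes by at most $1$ under a unit shift and averages to $b\theta$, some window attains $q$, and I fix an offset $c$ with $\sum_{j=0}^{b-1} a_{c+j} = q$. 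Now put $c_j = \gamma + a_{c+j}$ (so $\sum_j c_j = \gamma b + q = \tfrac{(n-m)b+1}{m}$) and let $Y$ have vertex set $\{z_1,\dots,z_{r-2}\} \sqcup X_0 \sqcup \{y_0, \dots, y_{b-1}\}$, with $X = \{z_1,\dots,z_{r-2}\} \cup X_0$, the set $X_0$ partitioned into blocks $X_j$ with $|X_j| = c_j$, and hyperedges $\{z_1,\dots,z_{r-2},y_j,y_{j+1}\}$ for $j \in \Z/b\Z$ (a $y$-cycle) together with $\{z_1,\dots,z_{r-2},y_j,x\}$ for $x \in X_j$. A direct count gives $\delta(Y/X) = nb - m(b + |X_0|) = -1$ and $|X| \geq |X_0| \geq q \geq 2$, proving condition (1).

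The remaining two conditions reduce to partial-sum estimates on $(a_i)$. For condition (3), any $Z$ with $X \subseteq Z \subsetneq Y$ is $Z = X \cup S$ with $S$ a proper subset of the $y$'s, say a union of $p$ arcs of the cycle; using $R[X] = \emptyset$ and $n - m - m\gamma = \rho$ one finds $\delta(Z/X) = \rho|S| + mp - m\sum_{j \in S} a_{c+j}$, and Lemma~\ref{4100}(3) applied to each arc of length $s$ (giving $m\sum a_{c+j} \leq m + \rho s$ over it) and summed over the $p$ arcs yields $\delta(Z/X) \geq 0$. For condition (2), let $U \subseteq Y$ with $X \not\subseteq U$: if $U$ omits some $z_k$ then no $W$ with $U \cap X \subseteq W \subseteq U$ carries any hyperedge, so $U \cap X \leq U$; otherwise $U$ omits a point of $X_0$, and for $W = (U\cap X) \cup S_W$ the hyperedges from $S_W$ into $U \cap X$ number $\sum_{j \in S_W} |X_j \cap U| \leq \sum_{j\in S_W} c_j$, so the condition~(3) computation gives $\delta(W) \geq \delta(U\cap X)$ when $S_W$ is a proper subset of the $y$'s, while $S_W = \{y_0,\dots,y_{b-1}\}$ gives $\delta(W) - \delta(U\cap X) = (n-m)b - m|U \cap X_0| \geq m - 1 \geq 1$; hence $U \cap X \leq U$. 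Finally $Y \in \C_0$ follows by combining these, noting $\delta(Y) = \delta(X) - 1 > 0$ for the case $W = Y$.

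The main obstacle is condition (2): the extension must be genuinely algebraic over the \emph{whole} of $X$, with no proper subset of $X$ already supporting a sub-configuration of negative $\delta$-defect. This is what forces $X_0$, and hence $b$, to be large and the edges to $X_0$ to be distributed across the disjoint blocks $X_j$, and it is exactly where the uniform two-sided partial-sum control of the Beatty sequence (Lemma~\ref{4100}(3)) is indispensable; since that control degenerates when $m = 1$, that case has to be handled by the separate explicit gadget above. The rest is routine bookkeeping with the residue class of $b$, the offset $c$, and the small degenerate cases.
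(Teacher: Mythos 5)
Your construction takes a genuinely different route from the paper's. For $n > m > 1$ (your $m \geq 2$ case) the paper solves the Bezout relation $\ell m - cb = 1$ with $0 < b < m$, lets the cycle on $Y\setminus X$ have length exactly $b$, and places the $\ell$ ``heavy'' vertices using a Beatty sequence of the \emph{same} period $b$. Arcs of the cycle then correspond to consecutive intervals in $\Z$, so Lemma~\ref{4100}(3) applies directly arc by arc. You instead keep the Beatty sequence at period $m$ (slope $\rho/m$), take the cycle to have large length $B$ with $\rho B \equiv -1 \pmod m$ (necessarily $m \nmid B$), and normalize the offset $c$ so that $\sum_{j=0}^{B-1}a_{c+j} = q = (B\rho+1)/m$. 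Both versions produce the same gadget with $\delta(Y/X) = -1$, and your $m = 1$ case is the paper's; but the mismatch between the Beatty period $m$ and the cycle length $B$ introduces a gap in your argument for condition (3).

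You claim Lemma~\ref{4100}(3) ``applied to each arc of length $s$ (giving $m\sum a_{c+j} \le m + \rho s$ over it)'' handles every arc of $S$. This is fine for a non-wrapping arc, where the index set $\{c+j : j \in S_i\}$ is a consecutive interval. But an arc of the cycle containing both $y_{B-1}$ and $y_0$ has index set $\{c,\dots,c+k\}\cup\{c+B-k',\dots,c+B-1\}$, which is not consecutive, and you cannot close it up by periodicity since the Beatty period is $m$, not $B$. Splitting such an arc into its two consecutive halves and applying Lemma~\ref{4100}(3) to each only yields $m\sum a \le 2m + \rho s$, one $m$ too weak.

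The inequality does hold, but the argument must use your normalization $\sum_{j=0}^{B-1}a_{c+j} = q$. If $S$ contains a wrapping arc, then its complement $S^c$ in $\Z/B\Z$ consists entirely of non-wrapping arcs, say $p$ of them; the dual estimate $\tfrac{1}{s}(1 + \sum a_j) > \theta$ from the proof of Lemma~\ref{4100} gives $m\sum_{j\in T}a_{c+j} \ge \rho|T| - m + 1$ for each such arc $T$, hence $m\sum_{j\in S^c}a_{c+j} \ge \rho|S^c| - mp + p$. Since $m\sum_{j\in S}a_{c+j} = mq - m\sum_{j\in S^c}a_{c+j} = B\rho + 1 - m\sum_{j\in S^c}a_{c+j}$, this yields $m\sum_{j\in S}a_{c+j} \le \rho|S| + mp$, and hence $\delta(Z/X) \ge 0$. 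Without this (or an equivalent) complement argument your proof of (3) --- and the appeal to it inside your verification of (2) and of $Y\in\C_0$ --- is incomplete. The rest of your bookkeeping (the definition of $c_j$, the identity $\delta(Y/X) = -1$, condition (1), and the full-cycle case $S_W = \{y_0,\dots,y_{B-1}\}$ in (2)) checks out.
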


\begin{proof} Suppose first that $m = 1$. If $r = 2$, take $X = \{ x_0,\ldots, x_n\}$ with no relations on it  and $Y$ is $X$ together with an extra point $y$, where $R(y,x_i)$ holds for all $i$. If $r \geq 3$, do the same, but $X$ also includes an $(r-2)$-tuple $\bar{z}$, and $R(\bar{z}, y, x_i)$ holds. 

So now suppose that $n > m > 1$. We will suppose that $r = 2$: a similar argument to that used above  allows us to deduce the general case. 

Write 
\[ n = ma + c\,\,  \mbox{ with  } 0 < c < m.\]
So $m, c$ are coprime and we can find $\ell, b \in \Z$ with 
\[ \ell m - cb = 1.\]
We can take $0 < b < m$ (take an inverse of $-c$ modulo $m$) and it then follows that $0 < \ell \leq b,c$. If $\ell = b$, then $b=1$.
Note that 
\[ nb - m(ab+\ell) = -1.\]
We now assume that $b \geq 2$ (so in particular, $\ell < b$) and describe the construction of $Y$ (the case $b = 1$ will be considered at the end). 

Let $X$ consist of $(a-1)b + \ell$ points (with no edges). Let $Y = X \cup \{y_0,\ldots, y_{b-1}\}$ with $ab + \ell$ edges as follows:
\begin{enumerate}
\item[(i)] the vertices $y_0,\ldots, y_{b-1}$ form a $b$-cycle (with $R(y_i , y_{i+1})$ holding, where the indices are read modulo $b$);
\item[(ii)] each vertex $y_i$ is adjacent to at least $(a-1)$ of the vertices in $X$;
\item[(iii)] each vertex in $X$ is adjacent to exactly one vertex in $Y\setminus X$.
\end{enumerate}

Thus there are a further $\ell$ edges of $Y$ to be specified. These will be of the form $(x_i, y_i)$ for $i$ in some subset $I \subseteq \{0,\ldots, b-1\}$ of size $\ell$ (and distinct $x_i \in X$). The subset $I$ is chosen so that (3) of the lemma holds. Once we have this, the rest of the lemma follows. Indeed, first note that as $Y$ is a cycle with some extra edges freely amalgamated over its vertices, then $Y \in \C_0$. By construction $\delta(Y/X) = nb - m(ab+\ell) = -1$, so (1) holds. For (2) suppose $\emptyset \neq A \subseteq X$. We claim that $X\setminus A \leq Y \setminus A$, and then (2) follows (by Lemma \ref{411}(2)). To see the claim, note that $\delta((Y\setminus A) /(X\setminus A)) = -1 + m\vert A \vert > 0$, and if $Z \subset Y \setminus X$ then $\delta(Z/(X\setminus A)) \geq \delta(Z/X) \geq 0$, by (3). 

\medskip

To prove (3) (for suitable choice of $I$) it will suffice (by free amalgamation) to show that if $Z \subset Y\setminus X$ is connected, then $\delta(Z/X) \geq 0$. Let $q = \vert \{i \in I : b_i \in Z\}\vert$ and $s = p+q = \vert Z \vert$.

Then 
\[ \delta(Z/X) = sn - m(qa+p(a-1) -p+q-1) = sc - m(q-1).\]

Thus 
\begin{equation}\label{*}\delta(Z/X) \geq 0 \Leftrightarrow \frac{q-1}{s} \leq \frac{c}{m}.\end{equation}

So we need to construct $I$ of size $\ell$ so that for any $s$ consecutive elements of $0,\ldots, b-1$ (read modulo $b$, and with $s < b$), the number of elements $q$ in $I$ satisfies the above inequality. 

\medskip

To the construct $Y$, we let $(a_i)$ be a  sequence as in Lemma \ref{4100} and let:
\[ I = \{ i \in \{0,\ldots, b-1\} : a_i = 1\}.\]

Verifying equation (\ref{*}) amounts to showing that if $0 < s < b$ and $i <b$, then  
$\frac{q-1}{s}  \leq \frac{c}{m}$, 
where $q = \sum_{i+1 \leq j \leq i+s a_j} a_j $. Suppose for a contradiction that $(q-1)/s > c/m$. Recall that $\ell m - cb = 1$, so $\frac{\ell}{b} = \frac{c}{m} + \frac{1}{bm}$. By (3) of Lemma \ref{4100}, $(q-1)/s \leq \ell/b$, so by assumption, we have:
\[ \frac{c}{m} < \frac{q-1}{s} \leq \frac{\ell}{b} = \frac{c}{m}+\frac{1}{bm}.\]
Thus 
\[ 0 < \frac{q-1}{s} - \frac{c}{m} < \frac{1}{bm}.\]
But 
\[ \frac{q-1}{s}- \frac{c}{m} = \frac{(q-1)m - cs}{sm} \geq \frac{1}{sm} > \frac{1}{bm}\]
as $s < b$. This is a contradiction. So $(q-1)/s \leq c/m$ and therefore by equation (\ref{*}), $\delta(Z/X) \geq 0$, as required.

This completes the proof that $Y$ satisfies the properties of Lemma \ref{48}.

\medskip

For the remaining case $b = 1$ (still assuming $r = 2$, without loss of generality) we take $X$ to consist of $a+\ell$ points and $Y\setminus X$ has  a single point $y_0$ which is adjacent to all points in $X$.
\end{proof}

\begin{lemma} \label{49} Suppose $A \in \X$ and $u_0 \in M_0\setminus A$ is basic over $A$. Let $D = \loc(u_0/A)$. Then $\cl^d(A, D) = M_0$.
\end{lemma}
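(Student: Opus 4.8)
The plan is to deduce the lemma from the following claim, which I would prove for an arbitrary pair consisting of a set $A \in \X$ and a basic orbit $D = \orb(u_0/A)$ over it:
$(\star)$ for every $c \in M_0 \setminus A$ there is a finite tuple $\bar e$ of elements of $D$ with $c \nind^d_A \bar e$, that is, $d(c/A\bar e) < d(c/A)$.
Granting $(\star)$ for all such pairs, the induction in the proof of Lemma~\ref{monod}(3) gives $c \in \cl^d(A,D)$ for every $c$: if $d(c/A)=0$ then $c \in A$, and if $d(c/A)=r\ge 1$, pick $\bar e$ in $D$ as in $(\star)$, put $B=\cl^d(A\bar e)\in\X$, note that $D\setminus B$ is a basic $G_B$-orbit by Lemma~\ref{monod}(1) and $d(c/B)\le d(c/A\bar e)<r$, so by induction $c \in \cl^d(B,D\setminus B)\subseteq\cl^d(A,D)$, the last inclusion because $B$ and $D\setminus B$ lie in the $d$-closed set $\cl^d(A,D)$. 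So it suffices to prove $(\star)$, with $A$, $u_0$, $c$ arbitrary.

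To prove $(\star)$, set $k=d(u_0/A)\ge 1$ and $r=d(c/A)\ge 1$, and first pass to finite data: using finitariness of $\cl^d$, the description of $\cl^d$ via simply algebraic extensions, and $d$-homogeneity (Corollary~\ref{cor1}), choose a finite $\le$-substructure $P\subseteq A$ of $M_0$ and finite $\le$-substructures $Q\ni u_0$, $R\ni c$ of $M_0$ with $Q\cap A=R\cap A=P$, $\delta(Q/P)=k$, $\delta(R/P)=r$, and such that $(P,Q)$ and $(P,R)$ determine the types of $u_0$ and $c$ over $A$ — concretely, any $v$ with $\cl_0(Pv)\cong Q$ over $P$ (matching $v$ with $u_0$) whose closure is freely amalgamated with $A$ over $P$ satisfies $v\in\orb(u_0/A)$, and similarly for copies of $c$.

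Now comes the construction producing the dependence, which is where Lemma~\ref{48} enters. Take $X\subseteq Y\in\C_0$ as in Lemma~\ref{48}, with $t:=|X|\ge 2$ and $\delta(Y/X)=-1$; recall that in that construction $X$ carries no relations, so every relation of $Y$ meets $Y\setminus X$. Let $F$ be obtained from $P$ by freely amalgamating over $P$ one copy $R$ of the $c$-type and $t-1$ copies $Q_2,\dots,Q_t$ of the $u_0$-type, then adjoining the points and relations of $Y$, where $X$ is identified with $\{c'\}\cup\{u_2,\dots,u_t\}$, $c'$ being the distinguished point of $R$ and $u_j$ that of $Q_j$. Since the relations adjoined are precisely those of $Y$, one gets $\delta(F/P)=\delta(R/P)+\sum_{j=2}^{t}\delta(Q_j/P)+\delta(Y/X)=r+(t-1)k-1$. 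I would then verify: (a) $F\in\C_0$ and $P\le F$ — here conditions (2) and (3) of Lemma~\ref{48}, together with the standing hypotheses on $n,m,r$, are used, to see that the deficiency $-1$ of $Y$ over $X$ is absorbed by the positive predimension of $R$ and the $Q_j$ and that no subset of $F$ drops below $0$; and (b) the free amalgam $A\coprod_P F$ lies in $\bar{\C}_0$ and embeds over $A$ into $M_0$ (by the construction of $M_0$ and $d$-homogeneity), and in the image $c'\in\orb(c/A)$ while each $u_j\in\orb(u_0/A)=D$, with $u_2,\dots,u_t$ moreover $d$-independent over $A$ — here condition (2) of Lemma~\ref{48} is essential, since it prevents $Y\setminus X$ from being pulled into any $\cl^d(Au_j)$ or into $\cl^d(Ac')$, so the types over $A$ are undisturbed (Lemma~\ref{421} being used to read off the $d$-independence).

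With (a) and (b) in hand, in this copy $d(c'u_2\cdots u_t/A)\le\delta(F/P)=r+(t-1)k-1<r+(t-1)k=d(c'/A)+d(u_2\cdots u_t/A)$, so $c'\nind^d_A(u_2,\dots,u_t)$; conjugating by an element of $\Aut(M_0/A)$ taking $c'$ to $c$ yields the tuple $\bar e\in D$ required in $(\star)$. I expect the main obstacle to be the verification of (a) and (b): arranging that the negative-predimension gadget of Lemma~\ref{48} can be glued into $M_0$ over $A$ without changing the orbits $\orb(u_0/A)$ and $\orb(c/A)$, and handling the case $r\ge 3$ (and $m>1$) — for which, exactly as in the proof of Lemma~\ref{48}, one reduces to the binary situation by attaching a fixed $(r-2)$-tuple to the relevant hyperedges. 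The remaining steps are routine predimension bookkeeping.
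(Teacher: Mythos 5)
Your proof follows the paper's argument essentially verbatim: the reduction via Lemma~\ref{monod}(3) to producing a dependence, passage to finite $\le$-closures $B = \cl_0(u_0 A_0)$, $C = \cl_0(c A_0)$ over a finite $A_0 \le A$, and the free amalgam $E = Z \coprod_X Y$ (with $Z$ the free amalgam over $A_0$ of one copy of $C$ and $|X|-1$ copies of $B$) built from the gadget of Lemma~\ref{48}, with the predimension drop $\delta(Y/X) = -1$ yielding $c \nind^d_A \bar e$ exactly as you describe; the verifications you label (a) and (b) are handled in the paper by the Claim that $C, B_i \leq E$. One minor remark: there is no need for a separate reduction to $r = 2$ at this stage, since Lemma~\ref{48} already delivers $X \subseteq Y \in \C_0$ for the given arity $r$.
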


\begin{proof} Suppose $c \in M_0\setminus A$. By Lemma \ref{monod} (3), it will suffice to show that there is a finite tuple $e$ in $D$ with $c \nind^d_A e$. 

Let $A_0 \leq A$ be finite with $d(A_0) = A$. Let $C = \cl_0(c A_0)$. We can assume that $C \cap A = A_0$. Similarly let $B = \cl_0(u_0 A_0)$ and note we can also assume that $B \cap A =  A_0$ (if it is bigger, then replace $A_0$ by the intersection; this will not affect the condition on $C$). 

Let $X \subseteq Y$ be as in Lemma \ref{48} and $k = \vert X \vert$. Note that we can assume that there are no relations on the set $X$. Let $Z$ be the free amalgam of $C$ and $k-1$ copies $B_2,\ldots, B_{k}$ of $B$ over $A_0$. Let $x_1 = c$ and for $i = 2,\ldots, k$ let $x_i \in B_i \setminus A_0$ be the copy of $u_0$ inside $B_i$. Identify the $x_i$ with the points of $X$ and let $E$ consist of the free amalgam  $Z \coprod_X Y$ of $Z$ and $Y$ over $X$.

\smallskip

\textit{Claim:\/} We have $C, B_i \leq E$.

\smallskip

Note that once we have the claim, it follows (as $\emptyset \leq C$) that $E \in \C_0$, so we can assume that $E \leq M_0$. Then $x_2, \ldots, x_k \in D$ and $d(c/A_0, x_2,\ldots, x_k) = d(c/A_0) -1$, so $c \nind^d_A x_2,\ldots, x_k$.

\smallskip

We now prove the claim. By the symmetry of the situation, it is enough to show $C \leq E$. Let $C \subseteq F \subseteq E$. Then $F$ is the free amalgam $F \cap Z \coprod_{F \cap X} F\cap Y$. If $X \not\subseteq F$ then $F\cap X \leq F\cap Y$ (by (2) of Lemma \ref{48}) so $F\cap Z \leq Z$. As $C \leq F\cap Z$ we obtain $C \leq F$. If $X \subseteq F$ and $Y \not\subseteq F$, then similarly (using (3) of Lemma \ref{48}) we have $X = F \cap X \leq F \cap Y$, so again $C \leq F$. 

So now suppose $Y \subseteq F$. Note that $\delta(F\cap Z) \geq d_Z(XC)$ (the dimension in $Z$ of $X \cup C$). So 
\[\delta(F) \geq d_Z(XC) + \delta(Y/X) = d_Z(C) + d_Z(X/C) - 1 \geq \delta(C) + k-2 \geq \delta(C).\]
(Here we have used $C \leq Z$ and (1) of Lemma \ref{48}.)
\end{proof}

\begin{corollary} If $g \in \Aut(M_0/\cl^d(\emptyset))$ is bounded, then there is $E \in \X$ such that $g(\cl^d(E b)) = \cl^d(E b)$ for all $ b \in M_0$.
\end{corollary}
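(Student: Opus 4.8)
The plan is to deduce the statement from the general machinery of Section~3, the only genuinely new input being Lemma~\ref{49}. First I would record that, by Corollary~\ref{39}, the relation $\ind^d$ on $M_0$ is stationary, so that Lemma~\ref{monod} and Proposition~\ref{unbdd} are available. The key step is then to observe that $M_0$ is \emph{monodimensional}: given any $A \in \X$ and any basic $G_A$-orbit $D = \loc(u_0/A)$, Lemma~\ref{49} gives $\cl^d(A,D) = M_0$, and Lemma~\ref{monod}(2) (which is where stationarity is used) turns this into monodimensionality of $M_0$ directly.

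With monodimensionality in hand, the rest is a direct application of Proposition~\ref{unbdd}: since $g$ is bounded, there is $E \in \X$ such that $g(B) = B$ for every $B \in \X$ with $E \subseteq B$. (Note that Proposition~\ref{unbdd} does not even use the hypothesis that $g$ fixes $\cl^d(\emptyset)$ pointwise.) It then remains only to observe that for each $b \in M_0$ the set $\cl^d(Eb)$ is of the form $\cl^d(X_0 \cup \{b\})$ for a finite $X_0$ with $\cl^d(X_0) = E$ — using that $\cl^d$ is a finitary closure operation — so $\cl^d(Eb) \in \X$ and $E = \cl^d(E) \subseteq \cl^d(Eb)$; hence $g(\cl^d(Eb)) = \cl^d(Eb)$, as required.

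The only point requiring any care is the passage through monodimensionality, i.e.\ verifying the hypotheses of Lemma~\ref{monod}(2); but this is exactly what Lemma~\ref{49} supplies, so there is no real obstacle at this stage — all the delicate combinatorial work (Lemma~\ref{48} and the Beatty-sequence estimates of Lemma~\ref{4100}) has already been absorbed into the proof of Lemma~\ref{49}.
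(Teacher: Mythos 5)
Your argument is correct and is essentially the paper's proof, which is stated there in a single line as following from Lemma~\ref{49} and Proposition~\ref{unbdd}; you simply spell out the reduction of $\cl^d(Eb)$ to a member of $\X$ containing $E$ that the paper leaves implicit. The only superfluous step is the appeal to Lemma~\ref{monod}(2): Lemma~\ref{49} already establishes $\cl^d(A,D)=M_0$ for \emph{every} $A\in\X$ and every basic $G_A$-orbit $D$, so monodimensionality follows directly from Definition~\ref{d35} (with $B=A$) without needing to bootstrap from a single orbit.
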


\begin{proof}
This follows from the above and Proposition \ref{unbdd}.
\end{proof}

\begin{remarks} \rm The class $\C_0$ contains some msa extension $X \subset Y$. If we change the structure on $X$ to some other structure in $\C_0$, then then result is still a msa extension in $\C_0$. Furthermore, by `duplicating' the points in $X$ if necessary, we can obtain a msa extension with the property that if $r, r' \in R[Y]$ are distinct and both involve points of $Y\setminus X$ and $X$, then $r \cap r' \cap X = \emptyset$. To do this, replace $X$ by the disjoint union of non-empty $r \cap X$ (for $r \in R[Y]\setminus R[X]$). Then each element of the new $X$ is in exactly one relation in $R[Y] \setminus R[X]$. 
\end{remarks}

\begin{theorem} \label{nobdd} If $g \in \Aut(M_0/\cl^d(\emptyset))$ is bounded, then $g$ is the identity.
\end{theorem}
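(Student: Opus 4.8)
The plan is to combine the previous corollary with Lemma \ref{49} and a counting argument based on the technical Lemma \ref{48}. Suppose for a contradiction that $g \in \Aut(M_0/\cl^d(\emptyset))$ is bounded but not the identity. By the corollary immediately above, there is some $E \in \X$ such that $g$ stabilises $\cl^d(Eb)$ setwise for every $b \in M_0$. Since $g \neq \id$ and $\cl^d$ subsumes definable closure, there is some element of $M_0$ moved by $g$; enlarging $E$ if necessary (which only makes the stabilisation property easier, as noted after Definition \ref{d312}), we may assume there is a point $u_0 \in M_0 \setminus E$ which is basic over $E$ and with $g u_0 \neq u_0$. Write $D = \loc(u_0/E)$; by Lemma \ref{49}, $\cl^d(E, D) = M_0$. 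Note $g$ fixes $E$ setwise (indeed pointwise is not needed — setwise suffices for the argument, but in fact $g$ permutes $D$).

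First I would reduce to a statement purely about the finite combinatorics inside a single $\cl^d(E,\bar b)$. Pick a finite $E_0 \leq M_0$ with $\cl^d(E_0) = E$, and let $B = \cl_0(u_0 E_0)$, arranging $B \cap E = E_0$ as in the proof of Lemma \ref{49}. Take $X \subseteq Y$ as in the (modified) Lemma \ref{48}, with $X$ carrying no relations and, using the Remarks before this theorem, with the extra feature that distinct hyperedges of $Y$ meeting $Y\setminus X$ intersect in no point of $X$. Set $k = |X|$. Form the free amalgam over $E_0$ of $k$ copies $B_1,\dots,B_k$ of $B$, identify the $k$ copies of $u_0$ with the points of $X$, and glue on $Y$ over $X$ exactly as in Lemma \ref{49}; call the result $E^\ast$, and by the Claim in that proof each $B_i \leq E^\ast$, so $E^\ast \in \C_0$ and we may take $E^\ast \leq M_0$. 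Now all the points $x_1,\dots,x_k$ (the copies of $u_0$) lie in $D$, so $g$ permutes $\{x_1,\dots,x_k\}$ together with the structure they sit in; more precisely, since $g$ stabilises $\cl^d(E,\bar x)$ for $\bar x = (x_1,\dots,x_k)$, it induces an automorphism of the finite structure $\cl^d(E_0,\bar x)$ fixing $E_0$ setwise.

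The heart of the argument — and the step I expect to be the main obstacle — is to derive a contradiction from the existence of the extra point(s) of $Y \setminus X$ inside $\cl^d(E_0,\bar x)$ together with the fact that $\delta(Y/X) = -1$. The point is that $Y \setminus X$ gives $|X| = k$ new relations on only $|Y\setminus X|$ new points with predimension drop $1$, so the $x_i$ become "tied together'': any automorphism of $\cl^d(E_0,\bar x)$ fixing $E_0$ and permuting the $x_i$ must actually be constrained, and iterating this (adjoining many such $Y$-configurations attached to various tuples from $D$) forces $g$ to fix a set of points whose $d$-closure is all of $M_0$, hence $g = \id$. Concretely I would argue: the configuration $Y$ witnesses that $x_1 = u_0 \in \acl$-type-closure of the other $x_i$ over $E_0$ in a rigid way (using msa-ness and Hrushovski's bound of at most $\delta(Z)$ copies, Lemma \ref{fe}), so $g$ restricted to $D$ has the property that the image of any one point determines the images of boundedly many others; but $D$ generates $M_0$ over $E$ and $g$ is nontrivial on $D$, and by choosing the $Y$-configurations so as to link an arbitrary pair of basic points, one shows $g$ must fix $D$ pointwise. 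Then $\cl^d(E,D) = M_0$ gives $g = \id$, the desired contradiction. The delicate part is making the "rigidity of $Y$'' precise — this is exactly where the Remarks' strengthening of Lemma \ref{48} (distinct $Y$-edges disjoint on $X$) and the bound from Lemma \ref{fe} are used, and where the hypothesis $n > m$ (resp. $n \geq m$) enters through the existence of $Y$ with $\delta(Y/X) = -1$ and $|X| \geq 2$.
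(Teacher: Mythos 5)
Your proposal diverges substantially from the paper's proof, and the key step is left as an unverified heuristic. The paper does \emph{not} use the configuration $X \subseteq Y$ from Lemma \ref{48} in the proof of Theorem \ref{nobdd} at all (that lemma is used only to establish monodimensionality in Lemma \ref{49}). Instead, the paper proceeds in two steps: (Step 1) it first shows that $g$ fixes every ``generic'' point $b$, i.e.\ every $b$ with $Ab \leq M_0$ and $\delta(b/A) = n$, by a direct predimension-counting argument using a single simply algebraic (msa) extension $U \leq U \cup D$ with base containing $b$, exploiting that $D$ and $gD$ are disjoint msa extensions of $B = \cl_0(A,gA,b,gb)$ and so force $\delta$ to drop by $\geq 2m$ over the free amalgam; (Step 2) it then propagates fixedness from generic points to arbitrary points by elementary constructions (adjoining a hyperedge $R(c,e,f)$ in the $m=n$ case, or a long path $c, b_1,\dots,b_s$ in the $n>m$ case). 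This is an entirely different mechanism from what you sketch.

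The concrete gaps in your proposal are: (i) you assert that $\cl^d(E_0,\bar x)$ is a finite structure, but $d$-closure of a finite set in $M_0$ is generally infinite (this is stated explicitly in Section 4.1), so the reduction to ``finite combinatorics inside $\cl^d(E_0,\bar x)$'' does not exist in the form you describe; (ii) the central claim --- that the configuration $Y$ imposes enough ``rigidity'' on $\{x_1,\dots,x_k\}$ to constrain the restriction of $g$ to $D$ and ultimately force $g$ to be the identity on $D$ --- is exactly the statement that needs a proof, and nothing in your argument shows that a nontrivial $g$ cannot permute the $Y$-configurations compatibly; Lemma \ref{fe}'s bound on the number of copies of a msa extension does not by itself rule out symmetries. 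You explicitly flag this as ``the step I expect to be the main obstacle,'' and indeed the obstacle is not overcome. Additionally, your opening reduction (``we may assume there is a basic $u_0$ over $E$ moved by $g$'') is not justified: $g$ stabilising each $\cl^d$-closed superset of $E$ setwise does not immediately preclude $g$ fixing every basic point over $E$ while still being nontrivial, and enlarging $E$ does not obviously repair this.
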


\begin{proof} Let $E \in \X$ be as in the Corollary: so $g(\cl^d(E b)) = \cl^d(E b)$ for all $b \in M_0$. Let $A \leq E$ be finite and $d(A) = d(E)$. 

\smallskip

\textit{Step 1:\/} If $b \in M_0$ is such that $Ab \leq M_0$ and $\delta(b/A) = n$, then $gb = b$.

\smallskip

\textit{Case 1: $r \geq 3$, $m= n = 1$.\/} Note that $E$ is infinite, so we may take $A$ to be of size at least $r-3$. By using elements of $A$ for the first $r-3$ coordinates in $R$, we can assume without loss that $r = 3$. 

Take $c$ with $c \ind^d_A b$ of the same type as $b$ over $E$. By the boundedness condition on $g$ we have $c, gc \ind^d_A b, gb$. So there are finite $C, B \leq M_0$ with $c, gc \in C$, $b , gb \in B$, $C \cup B \leq M_0$; by enlarging $A$ if necessary we can assume that $E \cap C = A = E \cap B$, and so $C, B$ are freely amalgamated over $A$. 

There is $f \in M_0$ with $R(c,b,f)$ and $CBf \leq M_0$. Note that $d(f/A) = 1$ and $gf \in \cl^d(fA)$,  so there is a finite $A \leq F \leq M_0$ with $\delta(F/A) = 1$ and $f, gf \in F$. Note that $\delta(C/F) = 1$ (otherwise it is zero and then $b \in \cl^d(cA)$). So $\delta(C \cap F/ A) = 0$ and therefore (as $C \cap E = A$) $C \cap F = A$. Similarly $B \cap F = A$. 

Suppose that $\{c, e, b\} \neq \{gc, ge, gb\}$. Then on $C \cup E \cup B$, there are at least 2 extra relations beyond those in the free amalgam over $A$. So 
\[\delta(CEB/A) \leq \delta(C/A)+\delta(E/A)+\delta(B/A) - 2 = 1.\]
But this contradicts $d(cb/A) = 2$. Thus, in particular, $gb = b$.

\medskip

\textit{Case 2: $r \geq 2$, $n > m$.\/} By using elements of $A$ for the first $r-2$ coordinates, we can assume $r = 2$. Let $B = \cl_0(A, gA, b, gb)$ and suppose for a contradiction that $gb \neq b$.

Let $Ab \leq C$ be a simply algebraic extension in $M_0$ with base $U$ containing $b$. We can assume that $b$ is in exactly one relation in $C$. Let $D = C\setminus (Ab)$; so $U \leq U \cup D$ is msa. As $gA \subseteq E$, we can assume that $g(U\cap A) \subseteq A$. We can also assume that $D \cap (B \cup g^{-1}B ) = \emptyset$. Then $gD \cap B = \emptyset$. So both $B \leq B \cup D$ and $B \leq B \cup gD$ are simply algebraic extensions (based on $U$ and $gU = g(U\cap A) gb$ respectively). As $gb \neq b$, we must have $gb \not\in U$, so $D \neq gD$. As the extensions are minimal, it follows that $D \cap gD = \emptyset$. 

Note that $\delta(A) + n = \delta(Ab) = \delta(C) = \delta(AD) +n - m$. So $\delta(AD) = \delta(A)+m$. In particular, $AD \leq C \leq M_0$, so $d(AD) = d(A) +m$. Let $V = \cl_0(A, D, gD)$. We show that $b, gb \not\in V$. Note that $V \subseteq \cl^d(AD)$ (by boundedness of $g$) so $d(V) = d(AD) = d(A)+m$. But $d(Ab) = d(A)+n > d(A) + m$, so $b \not\in V$. As $\cl^d(V)$ is $g$-invariant, we then obtain $gb \not\in V$.

Thus $B \cup V$ has at least 2 more relations in it than in the free amalgam of $B, V$ over $B \cap V$ (a relation from $D$ to $b$ and a relation from $gD$ to $gb$: neither of these is in the free amalgam, by the previous paragraph). So 
\[\delta(BV) \leq \delta(B) + \delta(V) - \delta(B \cap V) - 2m
 \leq \delta(B) + \delta(V) - \delta(A) - 2m.\]
 Now, $\delta(V) =  d(A) +m$. So $\delta(BV) \leq \delta(B) - m$. But this is a contradiction as $m \geq 1$ and $B \leq M_0$. 
 
 \smallskip
 
 \textit{Step 2:\/} If $c \in M_0$ then $gc = c$.
 
 \smallskip
 
 \textit{Case 1:\/} $r \geq 3$, $m = n = 1$. As before, we may assume that $r = 3$. It remains to show that if $c \in E$ then $gc = c$. As $g$ fixes all elements of $\cl^d(\emptyset)$, we may assume $c \not\in \cl^d(\emptyset)$. We may also assume $gc, c \in A$. There exist $e,f \in M_0$ with $Aef \leq M_0$ and $R[Aef] = R[A] \cup \{\{c,e,f\}\}$. Then $Ae, Af \leq Aef$, so by Step 1, $e, f$ are fixed by $g$. It then follows that $c$ is fixed by $g$ (otherwise $\{gc, e, f \} \not\in R$), as required.

\medskip

\textit{Case 2:\/} $r \geq 2$, $n > m$. As before, we may assume that $r = 2$. Let $C = \cl_0(A, c)$. Suppose $s \in \N$. There exist $ b_0 = c, b_1, b_2, \ldots, b_s \in M_0$ such that $R(b_{i-1}, b_i)$ (and no other relations hold on $C \cup \{b_1,\ldots, b_s\}$ outside $C$), and $C b_1\ldots b_s \leq M_0$. It is easy to see that for $t \leq s$ we have $Cb_1\ldots b_t \leq M_0$, $d(b_{t}/Cb_1\ldots b_{t-1}) = n-m$. Moreover,  if $s$ is large enough, then $C b_s \leq M_0$, so $Ab_s \leq M_0$ and $d(b_s/A) = n$. (For this, take $s \geq n/(n-m)$.) It follows from Step 1 that $gb_s = b_s$.

We now show that if $0 \leq t < s$ and  $b_{t+1}$ is fixed by $g$, then so is $b_t$. It follows that $c$ is fixed by $g$, as required. So suppose $b_t$ is not fixed by $g$. Note that $R(b_t, b_{t+1})\wedge R(gb_t , b_{t+1})$. Also, using the boundedness of $g$ we have:
\[n-m = d(b_{t+1}/ Cb_1\ldots b_t) = d(b_{t+1}/C b_1\ldots b_t gb_1\ldots gb_t) \leq d(b_{t+1}/b_t gb_t).\]
In particular, $b_{t+1}\not\in \cl_0(b_t, gb_t)$ and 
\[d(b_{t+1}/b_t gb_t) \leq \delta(b_{t+1}/ \cl_0(b_t, gb_t)) \leq n-2m,\]
because of the edges from $b_{t+1}$ to $b_t, gb_t$.
This is a contradiction (as $m \geq 1$).

\end{proof}

We can now combine the results about the  Hrushovski structure $M_0$ of Theorem \ref{M0} into the following, which is the main result of this section.

\begin{theorem} \label{413} Suppose either that $r  = 2$ and $n > m$, or that $r \geq 3$ and $n \geq m$. Then $\Aut(M_0/\cl^d(\emptyset))$ is a simple group. In fact, if $g \in \Aut(M_0/\cl^d(\emptyset))$ is not the identity then every element of $\Aut(M_0/\cl^d(\emptyset))$ can be written as a product of 96 conjugates of $g^{\pm 1}$

\end{theorem}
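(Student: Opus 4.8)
The plan is to assemble Theorem~\ref{413} directly from the machinery built up in Sections 3 and 4. The structure of the argument has essentially three ingredients: (a) the dimension function $d$ on $M_0$ satisfies Assumption~\ref{dass} and $\ind^d$ is stationary; (b) there is a suitable basic orbit $D$ over some $A \in \X$ with $\cl^d(A,D) = M_0$; and (c) every non-identity element of $\Aut(M_0/\cl^d(\emptyset))$ is not $\cl^d$-bounded. Granting all three, Corollary~\ref{cor39} immediately gives that every element of $\Aut(M_0/\cl^d(\emptyset))$ is a product of $96$ conjugates of $g^{\pm 1}$, and hence — since this forces $\Aut(M_0/\cl^d(\emptyset))$ to have no proper non-trivial normal subgroup — that the group is simple.

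Concretely, the first step is to invoke Corollary~\ref{39}, which already states that $d$ satisfies Assumption~\ref{dass} and that $\ind^d$ is stationary; this is where the $d$-homogeneity and $d$-stationarity of $M_0$ (Corollary~\ref{cor1}), together with the characterisation of $\ind^d$ in Lemma~\ref{421}, are used. The second step is to produce the basic orbit: take any $A \in \X$ (for instance $A = \cl^d(\emptyset)$, which lies in $\X$ since $\cl^d$ is finitary, or any larger $d$-closed set) and any $u_0 \in M_0 \setminus A$ with $d(u_0/A) = 1$, so that $u_0$ is automatically basic over $A$; such a $u_0$ exists because $d$ is non-zero and by Existence there are finite sets of arbitrarily large dimension. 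Setting $D = \loc(u_0/A)$, Lemma~\ref{49} gives $\cl^d(A,D) = M_0$, which is exactly hypothesis (b) of Corollary~\ref{cor39}. The third step is to note that by Theorem~\ref{nobdd} the only bounded automorphism of $M_0$ in $\Aut(M_0/\cl^d(\emptyset))$ is the identity; since $\cl^d$-bounded automorphisms are in particular bounded in the sense of Section 3 (Proposition~\ref{unbdd} and the surrounding discussion show the two notions agree here, $M_0$ being monodimensional by Lemma~\ref{49} and Lemma~\ref{monod}(2)), a non-identity $g$ is not $\cl^d$-bounded.

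With (a), (b), (c) in hand, Corollary~\ref{cor39} applies verbatim: for $1 \neq g \in \Aut(M_0/\cl^d(\emptyset))$, every element of the group is a product of $96$ conjugates of $g^{\pm 1}$. In particular the normal closure of any non-trivial element is the whole group, so the group has no proper non-trivial normal subgroup; being non-trivial (it is transitive on $D$, which is infinite), it is simple. The only subtlety is the bookkeeping around which set plays the role of $A$ in Corollary~\ref{cor39}: one wants $A = \cl^d(\emptyset)$ so that $G = \Aut(M_0/A) = \Aut(M_0/\cl^d(\emptyset))$, and Lemma~\ref{49} is stated for arbitrary $A \in \X$, so taking $A = \cl^d(\emptyset)$ (noting it is in $\X$) and $u_0$ basic over it is legitimate.

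The main obstacle is not in this assembly — which is essentially a citation chain — but in the inputs it rests on, chiefly Theorem~\ref{nobdd} (the absence of non-trivial bounded automorphisms under the hypotheses on $r, n, m$), whose proof in the case $m > 1$ requires the delicate combinatorial Lemma~\ref{48} and the Beatty-sequence estimates of Lemma~\ref{4100}. For the purposes of proving Theorem~\ref{413} itself, however, all of this is already available, and the proof is just the observation that the hypotheses of the theorem are precisely those under which Theorem~\ref{nobdd} and Lemma~\ref{49} hold, so that Corollary~\ref{cor39} can be applied.
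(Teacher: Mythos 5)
Your proposal is correct and follows exactly the same route as the paper: the paper's proof of Theorem~\ref{413} is a one-line citation of Corollary~\ref{cor39}, Corollary~\ref{39}, Lemma~\ref{49} and Theorem~\ref{nobdd}, which is precisely the chain you assemble. The only (minor) over-elaboration is the worry at the end about which $A$ to choose: Corollary~\ref{cor39} already concludes for $\Aut(M_0/\cl^d(\emptyset))$ whatever $A \in \X$ is used, so there is no need to force $A = \cl^d(\emptyset)$; and for the bounded versus $\cl^d$-bounded bookkeeping, the implication you actually need, namely that $\cl^d$-bounded implies bounded, is the easy direction and does not require monodimensionality, so citing Proposition~\ref{unbdd} there is slightly misplaced, though harmless.
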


\begin{proof} This follows from Corollary \ref{cor39}, Lemma \ref{39}, Lemma \ref{49} and Theorem \ref{nobdd}.
\end{proof}

\begin{remarks}\rm We have been working with symmetric structures in a signature with a single $r$-ary relation.  More generally, suppose we have a signature with relations $R_i$  of arity $r_i$ (for $i \in I$). Suppose $n, m_i$ are positive integers. We define the predimension of a finite structure $A$ to be 
\[\delta(A) = n \vert A \vert - \sum_{i\in I} m_i\vert R_i[A]\vert.\]
Let $\C_0$ consist of such $A$ with $\delta(A') \geq 0 $ for all $A' \subseteq A$. Then we can form the generic structure $M_0$ for $(\C_0, \leq)$ exactly as before. If there is some $i$ such that $m_i \neq 0$ is coprime to $n$, $r_i = 2$ and $n > m_i$, or $r_i \geq 3$ and $n \geq m_i$, then Theorem \ref{413} holds. The argument is the same: for all of the constructions in the proof,  just work with $R_i$ in place of $R$. It should also be clear that our assumption that $R$ is symmetric is not essential. 
\end{remarks}

\section{Further applications}

\subsection{Generalized polygons} For a natural number $n \geq 3$, a \textit{generalized $n$-gon} is a bipartite graph $\Gamma$ of diameter $n$ and girth $2n$. It is \textit{thick} if each vertex has valency at least 3.  In \cite{T},  Hrushovski's amalgamation method from \cite{Hr}  was adapted to produce thick  generalized $n$-gons of finite Morley rank. These are almost strongly minimal and in \cite{GT}, Lascar's result (\cite{L}, Th\'eor\`eme 2) was applied to show that their automorphism groups are simple. This gives new examples of simple groups having a BN-pair which are not algebraic groups.

As with Hrushovski's original construction, an intermediate stage in the construction produces $\omega$-stable generalized $n$-gons $\Gamma_n$ of infinite Morley rank. In this subsection we observe that we can use the results involved in the proof of Theorem \ref{413} in place of Lascar's result to show that these generalized $n$-gons also have simple automorphism group. As in \cite{GT}, $\Aut(\Gamma_n)$ is transitive on ordered $2n$-cycles in $\Gamma_n$, so is also an example of a (non-algebraic) simple group with a spherical BN-pair of rank 2. 

We describe very briefly the construction of $\Gamma_n$ from Section 3 of \cite{T}. Work with a signature which has a unary predicate symbol $P$ and a binary relation symbol $R$ and consider bipartite graphs as structures in this signature, where $P$ picks out the vertices in one part of the partition and $R$ gives the adjacency relation. Vertices in $P$ are called points and those not in $P$ are called lines. Fix a natural number $n \geq 3$.

For a finite (bipartite) graph $A$ define $$\delta(A) = (n-1)\vert A \vert - (n-2) \vert R[A]\vert.$$

As in the previous section, let $\C_0$ consist of the finite bipartite graphs $A$ with $\delta(B) \geq 0 $ for all $B \subseteq A$. If $C \subseteq A$ write $C \leq A$ to mean $\delta(B) \geq \delta(C)$ whenever $C \subseteq B \subseteq A$.

Consider the class $\K_n$ of finite bipartite graphs $A$ which satisfy:
\begin{enumerate}
\item the graph $A$ has no $2m$-cycle, for $m < n$;
\item if $B \subseteq A$ contains a $2m$-cycle for $m > n$, then $\delta(B) \geq 2n+2$.
\end{enumerate}

The following is from (\cite{T}, Corollary 3.13 and Theorem 3.15):

\begin{lemma} We have $\K_n \subseteq \C_0$ and $(\K_n, \leq)$ is an amalgamation class.
\end{lemma}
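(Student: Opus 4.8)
Since both conditions defining $\K_n$ are inherited by subgraphs, $\K_n$ is closed under taking subgraphs, so to prove $\K_n\subseteq\C_0$ it is enough to show $\delta(A)\ge 0$ for every $A\in\K_n$. As $\delta$ is additive over connected components and strictly decreases when a vertex of degree $\le 1$ is removed, we may reduce to the case that $A$ is connected of minimum degree $\ge 2$; in particular $A$ contains a cycle. If $A$ contained a cycle of length $>2n$, then condition (2) applied with $B=A$ would give $\delta(A)\ge 2n+2\ge 0$; so by condition (1) every cycle of $A$ has length exactly $2n$. For connected $A$ we have $\delta(A)=|A|-(n-2)(k-1)$ with $k=|R[A]|-|A|+1$, so it remains to see that a connected graph all of whose cycles have length $2n$ satisfies $|A|\ge(n-2)(k-1)$. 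This follows from an ear decomposition of $A$ starting from a single $2n$-cycle: the hypothesis on cycle lengths forces an ear joining two vertices at current distance $2n-p$ to have length $p\ge n-1$, and an ear with $p\le n-2$ (the only kind that can decrease $\delta$) requires its two endpoints to be at distance $\ge n+2$ in the graph built so far; a short budget argument then keeps $\delta$ non-negative throughout. This is the computation behind \cite{T}, Corollary 3.13.

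For the amalgamation property, let $A,B,C\in\K_n$ with $A\le B$, $A\le C$ and $A=B\cap C$. The first candidate is the free amalgam $D_0=B\coprod_A C$. As $D_0$ has no edges between $B\setminus A$ and $C\setminus A$, every $X\subseteq D_0$ satisfies $\delta(X)=\delta(X\cap B)+\delta(X\cap C)-\delta(X\cap A)$; since $A\le B$ and $A\le C$ give $X\cap A\le X\cap B$ and $X\cap A\le X\cap C$ (the analogue of Lemma \ref{411}(2)), we obtain $\delta(X)\ge\delta(X\cap B)$ and $\delta(X)\ge\delta(X\cap C)$, so $D_0\in\C_0$ and $B\le D_0$, $C\le D_0$. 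The difficulty is that $D_0$ need not lie in $\K_n$. A cycle of $D_0$ contained in $B$ or in $C$ is harmless; but a \textit{mixed} cycle, using edges from both sides, meets $A$ in at least two points and decomposes into arcs, each a path between two points of $A$ with interior on one side. Attaching a path of length $p$ between two points of $A$ changes $\delta$ by $p-(n-1)$, so $A\le B$ (and $A\le C$) forces every such arc to have length $\ge n-1$; nonetheless a mixed cycle can be as short as $2n-2$ — two arcs of length $n-1$ — which violates condition (1), or can lie in a subgraph of predimension $<2n+2$, which violates condition (2). The remedy is to pass from $D_0$ to an amalgam $D$ in which the interiors of the minimal-length arcs responsible for the offending cycles are identified, and then to verify that $D\in\K_n$ and that $B\le D$, $C\le D$.

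This last verification is where I expect the real work to lie. One must check that the identifications create no cycle of length $<2n$, that every subgraph of $D$ containing a cycle of length $>2n$ has predimension at least $2n+2$, and that $B$ and $C$ are still $\le$-substructures of $D$ — a balance between the girth hypotheses on $B$ and $C$, the constant $2n+2$ in condition (2), and the lower bound $n-1$ on arc lengths. This is exactly \cite{T}, Theorem 3.15; by comparison the reductions above and the predimension computations yielding $B,C\le D_0$ are routine.
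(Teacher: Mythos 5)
The paper does not actually prove this lemma: it is recorded by citation to (\cite{T}, Corollary 3.13 and Theorem 3.15), so there is no internal proof to compare against. Your sketch is a sensible reconstruction of what that proof involves, and it correctly pins down the two nontrivial points --- the predimension count for a graph all of whose cycles have length exactly $2n$, and the failure of the free amalgam to remain in $\K_n$ (two arcs of length $n-1$ yield a mixed $(2n-2)$-cycle). Since you also end by deferring to \cite{T} for both computations, your write-up is in the same spirit as the paper's one-line citation, just with more commentary attached.

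Two cautions if you wished to complete the sketch. First, after reducing to a connected graph of minimum degree at least $2$, you invoke an ear decomposition; but such a graph need not be $2$-connected, so you would need to run the argument block by block and track the budget across cut vertices rather than treating it as a single ear decomposition. Second, I would not commit to the assertion that identifying the interiors of minimal-length arcs is literally the amalgamation device used in \cite{T}. The proof of the Corollary immediately following this lemma in the paper only invokes closure of $\K_n$ under \emph{free} amalgamation over bases that are $d$-closed in both factors, which suggests that Tent's amalgamation lemma may instead be organised around first passing to $d$-closures of the base (so that no $(n-1)$-arcs survive to collide) rather than around an explicit identification of offending arcs. The two pictures are morally compatible, but they are not the same argument, and you should check \cite{T} before asserting one over the other.
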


Let $\Gamma_n$ be the generic structure for the class $(\K_n, \leq)$ (cf.  Theorem \ref{M0}). Then $\Gamma_n$ is a countable generalized $n$-gon which is $\leq$-homogeneous. Lemmas \ref{pe}, \ref{fe} and Corollary \ref{cor1} hold (essentially because of $\leq$-homogeneity and the fact that $\K_n \subseteq\C_0$). As in Corollary \ref{39}, we have:

\begin{corollary} The dimension function $d$ on $\Gamma_n$ satisfies Assumption \ref{dass} and the relation $\ind^d$ is  stationary.
\end{corollary}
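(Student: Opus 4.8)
The plan is to reproduce the proof of Corollary~\ref{39} essentially verbatim; the one genuinely new point, specific to generalized polygons, is that a free amalgam of members of $\bar{\K}_n$ need not lie in $\bar{\K}_n$, so one extra combinatorial check is needed in the verification of Existence.

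For Existence, given $A,B,C\in\X$ I would, as in the $M_0$ case, pass to $d$-closures so that $B\subseteq A$ and $B\subseteq C$; since $d$-closures in $\Gamma_n$ are built from simply algebraic extensions, which preserve being a $\leq$-substructure, we have $A,B,C\leq\Gamma_n$, hence also $B\leq A$ and $B\leq C$. Form the free amalgam $F$ of a fresh copy $\hat A$ of $A$ with $C$ over $B$. As in Lemma~\ref{411}, together with $B\leq A$, one gets $F\in\bar{\C}_0$ and $C\leq F$ just as before; the additional claim is $F\in\bar{\K}_n$. The short-cycle part follows from the observation that two vertices of $\Gamma_n$ at distance $<n$ are joined there by a unique geodesic, which therefore lies in the $d$-closure of its endpoints; so no path of length $<n$ between points of $B$ can have its interior outside $B$, whence $F$, like $A$ and $C$, has girth $\geq 2n$. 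For the long-cycle condition one notes that an arc of such a cycle running through $\hat A\setminus B$ or $C\setminus B$ between two points of $B$ has length $\geq n$, and then runs the predimension estimate of \cite{T}. Granting $F\in\bar{\K}_n$, genericity of $\Gamma_n$ lets us realize $\hat A$ inside $\Gamma_n$ with $\hat A\cup C\leq\Gamma_n$ and $\hat A,C$ freely amalgamated over $B$, so that $\hat A\ind^d_B C$ by the analogue of Lemma~\ref{421}; finally $d$-homogeneity (Corollary~\ref{cor1}(1)) extends the identity-on-$B$ isomorphism $A\to\hat A$ to some $g\in G_B$, and $gA=\hat A$ as required.

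The rest is then routine. Taking $C=A$ above, for $a\in A\setminus B$ the automorphism produced moves $a$, so $\cl^d$ subsumes definable closure; and $d$ is non-zero since a single vertex has dimension $n-1\geq 2$. For stationarity of $\ind^d$, the analogue of Lemma~\ref{421} identifies $A_1\ind^d_B C$ (with $A_1,C\in\X$, $B\subseteq A_1$) with the conditions $A_1\cap C=B$, $A_1$ and $C$ freely amalgamated over $B$, and $A_1\cup C\leq\Gamma_n$; given also $A_2\ind^d_B C$ and an isomorphism $h:A_1\to A_2$ which is the identity on $B$, $d$-stationarity (Corollary~\ref{cor1}(2)) extends $h\cup\id_C$ to an automorphism of $\Gamma_n$. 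The main obstacle throughout is the single point flagged in the first paragraph: the short-cycle half is immediate from geodesic uniqueness, but the long-cycle half is where one must fall back on the explicit predimension bookkeeping of \cite{T} rather than the generic $\bar{\C}_0$ amalgamation lemma.
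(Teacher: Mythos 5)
Your approach matches the paper's: the key step in both is establishing that the free amalgam of two $d$-closed sets in $\K_n$ over a $d$-closed intersection remains in $\K_n$, after which the argument of Corollary~\ref{39} (Existence, subsuming definable closure, stationarity via Corollary~\ref{cor1} and Lemma~\ref{421}) transfers verbatim. The only difference is that the paper simply cites the proof of Theorem~3.15 of~\cite{T} for the $\K_n$-membership of the amalgam, whereas you unpack it into the short-cycle (geodesic uniqueness) and long-cycle (predimension bound) checks — essentially the content of that citation.
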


\begin{proof} If $X \subseteq Y, Z \in \K_n$ is $d$-closed in $Y, Z$, then the proof of Theorem 3.15 in \cite{T} shows that the free amalgam of $Y$ and $Z$ over $X$ is in $\K_n$. It follows that the class $\X$ of $d$-closures of finite sets in $\Gamma_n$ has the free amalgamation property, and so the proof of Corollary \ref{cor1} gives what we want here.
\end{proof} 

\begin{theorem} The group $\Aut(\Gamma_n)$ is a simple group. In fact, if $1 \neq g \in \Aut(\Gamma_n)$, then every element of $\Aut(\Gamma_n)$ is a product of 96 conjugates of $g^{\pm 1}$.
\end{theorem}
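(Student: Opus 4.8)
The plan is to transport the proof of Theorem~\ref{413} essentially verbatim, replacing $(\C_0,\leq)$ and $M_0$ by $(\K_n,\leq)$ and $\Gamma_n$. By the preceding corollary, Assumption~\ref{dass} holds for the dimension function $d$ on $\Gamma_n$ and $\ind^d$ is stationary, so the machinery of Section~3 is available; moreover $\cl^d(\emptyset) = \emptyset$, since the girth condition~(1) defining $\K_n$ forces $\delta(A) > 0$ for every non-empty finite $A \in \K_n$. Thus $\Aut(\Gamma_n) = \Aut(\Gamma_n/\cl^d(\emptyset))$, and by Corollary~\ref{cor39} it is enough to (a) produce $A \in \X$ carrying a basic $\Aut(\Gamma_n/A)$-orbit $D$ with $\cl^d(A,D) = \Gamma_n$, and (b) show that the only $\cl^d$-bounded automorphism of $\Gamma_n$ is the identity.

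For (a) I would prove the analogue of Lemma~\ref{49}: take $u_0$ a point lying generically over $A$ and $D = \loc(u_0/A)$, and verify the hypothesis of Lemma~\ref{monod}(3), namely that every $c \in \Gamma_n \setminus A$ is $d$-dependent over $A$ on a finite tuple from $D$. If $c$ is a line this is immediate, since in a generalized $n$-gon a line $\ell$ incident to two of its points $p,q$ satisfies $\delta(\ell/\{p,q\}) = 3-n \le 0$, so $\ell$ already lies in the $d$-closure of two suitable points of $D$. If $c$ is a point I would argue as in Lemma~\ref{49}, which requires the $\K_n$-analogue of the technical Lemma~\ref{48}: a finite extension $X \subseteq Y$ in $\K_n$ with $\delta(Y/X) = -1$, $|X|\ge 2$, with $U \cap X \le U$ whenever $X \not\subseteq U \subseteq Y$, and $\delta(Z/X)\ge 0$ for $X \subseteq Z \subset Y$. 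Writing the $\K_n$-predimension with the coprime pair $(n-1,n-2)$ in the roles of $(n,m)$ of Section~4, and noting $n-1 > n-2$, we are in the ``$r=2$, $n>m$'' regime, so one can build $Y$ from an independent set $X$ together with a cycle of new vertices, the extra edges to $X$ distributed by a Beatty sequence exactly as in Lemmas~\ref{4100} and~\ref{48}; the one new obligation is to take the cycle long enough (length at least $2n$, subdividing if necessary) so that no short cycle is created and condition~(2) of $\K_n$ is met. With such a $Y$ in hand, amalgamating $|X|-1$ copies of $\cl_0(u_0A_0)$ together with $\cl_0(cA_0)$ along $Y$, as in Lemma~\ref{49}, produces inside $\Gamma_n$ a tuple $e$ from $D$ with $d(c/A_0e) < d(c/A_0)$.

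The main obstacle is (b), the analogue of Theorem~\ref{nobdd}. I expect Case~2 (the $n>m$ case) of the proof of Theorem~\ref{nobdd} to go through: after fixing $E \in \X$ with $g(\cl^d(Eb)) = \cl^d(Eb)$ for all $b$ and choosing finite $A \le E$ with $d(A) = d(E)$, one first shows that $g$ fixes every $b$ with $Ab \le \Gamma_n$ and $\delta(b/A) = n-1$, by constructing a simply algebraic extension of $Ab$ based on a set containing $b$ and counting predimension on $B \cup D \cup gD$, and then one propagates this along a path $b_0,\dots,b_s$ to an arbitrary vertex. The extra difficulty compared with Section~4 is that \emph{every} auxiliary configuration --- the simply algebraic extensions, the free amalgams $B\cup D$ and $B\cup gD$, and the paths --- must be verified to lie in $\K_n$ rather than merely in $\C_0$: one must check that no $2m$-cycle with $m<n$ is introduced, and that any $2m$-cycle with $m>n$ that appears has predimension at least $2n+2$. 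Because $\Gamma_n$ is the generic structure for $(\K_n,\leq)$ the required configurations do exist, but controlling the cycle structure while keeping the predimension bookkeeping of Theorem~\ref{nobdd} intact --- in particular when lengthening simply algebraic extensions to avoid short cycles --- is the delicate point.

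Granting (a) and (b), the conclusion is immediate: for $1 \neq g \in \Aut(\Gamma_n)$, the analogue of Theorem~\ref{nobdd} shows $g$ is not bounded, hence not $\cl^d$-bounded, so Corollary~\ref{cor39} (applied with the $A$ and $D$ from (a)) gives that every element of $\Aut(\Gamma_n)$ is a product of $96$ conjugates of $g^{\pm1}$; in particular $\Aut(\Gamma_n)$ is simple.
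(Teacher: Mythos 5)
Your high-level plan (apply Corollary~\ref{cor39}, so exhibit a basic orbit $D$ over some $A$ with $\cl^d(A,D)=\Gamma_n$ and show there is no non-trivial $\cl^d$-bounded automorphism) matches the paper. But the way you propose to achieve both steps is genuinely different from the paper, and the route you take has real gaps.

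The paper does not transplant the Section 4 machinery into $\K_n$. For the monodimensionality step it uses specific model-theoretic/geometric facts about generalized $n$-gons from \cite{T0}: for each vertex $x$, the neighborhood $D(x)$ is a basic orbit over $x$; $\Gamma_n$ lies in the definable closure of $D(x_0),D(x_1),x_2,\dots,x_{2n-1}$ for any $2n$-cycle; and one can move from $D(x_0)$ to $D(x_1)$ via a definable bijection $D(x)\to D(y)$ for $x,y$ at distance $n$ (for $n$ even this requires a little extra argument via a line $\ell$). This produces the required $A$ and $D$ with $\Gamma_n=\cl^d(A,D)$ without ever invoking an analogue of Lemma~\ref{48} or Lemma~\ref{49}. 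For the absence of bounded automorphisms the paper does not redo Theorem~\ref{nobdd}; it invokes the proof of (\cite{GT}, Proposition 6.3), which is already written in the generalized polygon setting, replacing $\acl$ by $\cl^d$.

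Your proposal to carry over the Beatty-sequence construction of Lemma~\ref{48} runs into a concrete obstruction. In the $\K_n$-predimension the relevant pair is $(n',m')=(n-1,n-2)$, so $a=1$, $c=1$, and the identity $\ell m'-cb=1$ becomes $b=\ell(n-2)-1$. The new vertices $Y\setminus X$ sit on a $b$-cycle, and a cycle in a bipartite graph must have even length. But when $n$ is even, $\ell(n-2)$ is even, hence $b$ is odd for every choice of $\ell$: the $b$-cycle can never be bipartite, so $Y\notin\K_n$. Subdividing the cycle, as you suggest, does not rescue this: adding vertices on the cycle changes $|Y|$, $|R[Y]|$ and hence $\delta(Y/X)$, destroying the exact arithmetic ($\delta(Y/X)=-1$, $\delta(Z/X)\ge 0$ for $X\subseteq Z\subset Y$) that Lemma~\ref{48} is engineered to produce; you would have to redo the whole numerical analysis, not just re-use it. Separately, your claim that the case where $c$ is a line is ``immediate'' because $\ell$ has predimension $3-n\le 0$ over two incident points is not justified: you need two points of the \emph{specific} orbit $D=\loc(u_0/A)$ incident to $\ell$, and nothing in the setup guarantees that $D(\ell)$ meets $D$ in more than one point (indeed that $D(\ell)\subseteq\cl^d(A,D)$ is exactly the non-trivial content the paper establishes for $n$ even via the map $\phi$). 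So while the overall strategy is sound, both parts (a) and (b) of your proposal are left with substantial, and in part fatal, unaddressed technical obstacles; the paper routes around them by using the available structure theory of generalized polygons rather than an abstract Lemma~\ref{48}-style construction.
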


\begin{proof} It follows from (\cite{T}, Corollary 3.13) that $\cl^d(\emptyset) = \emptyset$ for $\Gamma_n$. To prove the theorem, we shall apply Corollary \ref{cor39}. So we first find a suitable basic orbit $D$ and then show that there are no non-trivial bounded automorphisms. The first part is essentially as in the proof of (\cite{T}, Theorem 4.6), but we give a few details.  

If $x \in \Gamma_n$, let $D(x)$ denote the set of vertices adjacent to $x$. Then by the $\leq$-homogeneity, $D(x)$ is a basic orbit over $x$. If $x,y \in \Gamma_n$ are at distance $n$, then there is a bijection definable over $x,y$ from $D(x)$ to $D(y)$ (\cite{T0}, 1.3). Suppose $x_0,\ldots, x_{2n-1}$ is a $2n$-cycle in $\Gamma_n$ with $x_0 \in P$. Then $\Gamma_n$ is in the definable closure of $D(x_0), D(x_1), x_2, \ldots, x_{2n-1}$ (see \cite{T0}, 1.6). If $n$ is odd, there is a vertex $z$ at distance $n$ from both $x_0$ and $x_1$ and therefore $\Gamma_n$ is in the definable closure of $D(x_0), x_1, \ldots, x_{2n-1}, z$. So if we let $A = \{x_0, \ldots, x_{2n-1}, z\}$ and $D = \{ c \in D(x_0) : d(c/A) = 1\}$, then $D$ is a basic orbit over $A$ and $\Gamma_n = \cl^d(A, D)$. 

So now suppose $n$ is even. As in the previous paragraph, it will suffice to show that there is a line $\ell$ and a finite set $A$ with $D(\ell) \subseteq \cl^d(D(x_0), A)$, because $D(x_1)$ is in the definable closure of $D(\ell)$ and some finite set. Let $p_3 \in P$ be at distance $n$ from $x_0$ and let $\ell \not\in P$ be at distance $n-1$ from $x_0, p_3$.  If $k \in D(x_0)$  there is a unique path of length $n-1$ from $k$ to $p_3$. Let $a$ denote the vertex adjacent to $k$ on this path. There is then a unique path of length $n-1$ from $a$ to $\ell$. Let $\phi(k)$ denote the vertex on this path adjacent to $\ell$. So we have a definable map  $\phi : D(x_0) \to D(\ell)$. It can be seen (by considering the paths involved  in this definition of $\phi$) that  that $d(k/x_0, p_3, \ell, \phi(k)) = 0$ for all $k \in D(x_0)$. Thus, if $d(k/x_0, p_3, \ell) = 1$, then $d(\phi(k)/ x_0, p_3, \ell) = 1$.  It follows that the image of $\phi$ contains $D(\ell)\setminus \cl^d(x_0, p_3, \ell)$, so $D(\ell) \subseteq \cl^d(D(x_0), x_0, p_3, \ell)$, as required. 

To show that there are no non-trivial bounded automorphisms, one uses that same proof as in (\cite{GT}, Proposition 6.3), replacing $\acl$ there by $\cl^d$.
\end{proof}

\subsection{$\aleph_0$-categorical structures} \label{Sec52} We recall briefly a variation on the construction method of Section \ref{Sec41} which gives rise to $\aleph_0$-categorical structures. The original version of this is in \cite{Hrpp} where it is used to provide a counterexample to Lachlan's conjecture, and in \cite{Hr97} where it is used to construct a non-modular, supersimple $\aleph_0$-categorical structure. The book \cite{W} (Section 6.2.1) is a convenient reference for this. Generalizations and reworkings of the method (particularly relating to simple theories) can be found in \cite{E:predim}. For the rest of this subsection, assume that $m, n, r, \delta, (\C_0, \leq)$ etc. are as in Section \ref{Sec41}.

In this version of the construction, $d$-closure is uniformly locally finite. Suppose  $f : \R^{\geq 0} \to \R^{\geq 0}$ is a continuous, increasing function with $f(x) \to \infty$ as $x \to \infty$. Let 
$$\C_f = \{A \in \C_0 : \delta(X) \geq f(\vert X \vert)\,\, \forall X \subseteq A\}.$$ 
Note that if $X \subseteq A \in \C_f$ then 
$$\delta(X) \geq \delta(\cl^d(X)) \geq f(\vert \cl^d(X)\vert)$$
so $\vert \cl^d_A(X) \vert \leq f^{-1}(\delta(X)) \leq f^{-1}(n\vert X \vert).$

If $B \subseteq A \in \C_f$ and  $\cl^d_A(B) = B$, then we write $B \leq_d A$. For suitable choice of $f$ (call these \textit{good} $f$), $(\C_f, \leq_d)$ has the free $\le_d$-amalgamation property: if $A_0 \leq_d A_1, A_2 \in \C_f$ then $A_i \leq_d A_1\coprod_{A_0} A_1 \in \C_f$. In this case we have an associated countable \textit{generic structure} $M_f$. So $M_f$ is $\leq_d$-homogeneous and the set $\X$ of finite $d$-closed subsets of $M_f$ is (up to isomorphism) $\C_f$. As $d$-closure is uniformly locally finite, the structure $M_f$ is $\aleph_0$-categorical (by the Ryll - Nardzewski Theorem). Algebraic closure in $M_f$ is equal to $d$-closure.

\begin{remarks}\label{frem}\rm
To construct good functions, we can take $f$ which are piecewise smooth and where the right derivative $f'$ satisfies $f'(x) \leq 1/x$ and is non-increasing, for $x \geq 1$. The latter condition implies that $f(x+y) \leq f(x) + yf'(x)$ (for $y \geq 0$). It can be shown that under these conditions, $\C_f$ has the free $\leq_d$-amalgamation property. Also note that if $f'(x) \leq 1/x$ for all $x \geq x_0$, then for $y \geq x \geq x_0$ we have $f(y) \leq f(x) + \log(y-1) - \log(x-1)$.
\end{remarks}

\begin{assumption} \label{ass} \rm Henceforth, we assume that if $r = 2$, then $n > m$ and if $r\geq 3$, then $n \geq m$. We suppose that $f$ is a good function.  We will assume that $f(0) = 0$ and $f(1) > 0$, therefore $\cl^d(\emptyset) = \emptyset$. We shall also assume that $f(1) = n$. Thus if $X \in \C_f$ and $\vert X \vert \geq 2$, then $\delta(X) \geq f(\vert X \vert) > n$. In particular $\{x \} \leq_d X$ for all $ x \in X$.
\end{assumption} 

Let $G = \Aut(M_f)$.

As before, we write $\ind^d$ for $d$-independence in $M_f$. This is not stationary. If $A \leq_d  C \in \X$ and $b_0 \in M_f$, then $\{ b \in \loc(b_0/A) : b \ind^d_A C\}$ need not be a single $G_C$-orbit: the orbits are determined by the $d$-closures $\cl^d(bC)$. Clearly $\cl^d(bC) \supseteq \cl^d(bA) \cup C$ and as in Lemma \ref{421} it can be shown that $\cl^d(bA) \cap C = A$, $\cl^d(bA), C$ are freely amalgamated over $A$ and $\cl^d(bA) \cup C \leq M_f$ if and only if $b\ind^d_A C$. The closure operation $\cl^d$ on $M_f$ is finitary, invariant and subsumes definable closure.

\begin{definition} \rm Suppose $A \leq_d C \in \X$ and $b$ is a tuple of elements of $M_f$. Write $b \perp_A C$ to mean that $b \ind^d_A C$ and $\cl^d(bC) = \cl^d(bA)\cup C$. Note that in this case, $\cl^d(bC)$ is the free amalgam of $\cl^d(bA)$ and $C$ over $A$.
\end{definition}

The following is straightforward:

\begin{lemma} \label{perpsi} The relation $\perp$ is a stationary independence relation on $M_f$ compatible with $\cl^d$. \hfill$\Box$
\end{lemma}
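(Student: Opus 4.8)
The plan is to unwind the definition of $\perp$ into a symmetric, Fra\"iss\'e-style statement and then read off the seven axioms of Definition \ref{sir}. Recall from the discussion preceding the definition (the analogue of Lemma \ref{421} for $M_f$) that, for $A \leq_d C \in \X$ and a finite tuple $b$, one has $b \ind^d_A C$ exactly when $\cl^d(bA) \cap C = A$, the sets $\cl^d(bA)$ and $C$ are freely amalgamated over $A$, and $\cl^d(bA) \cup C \leq M_f$; the extra clause in $b \perp_A C$ says that this free amalgam is already $d$-closed, i.e.\ is $\cl^d(bC)$. Since a finite $d$-closed set is automatically a $\leq_d$-substructure of $M_f$, this is just the statement that $\cl^d(bC) = \cl^d(bA)\coprod_A C$. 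Using Compatibility to replace a tuple by the $d$-closure it generates (and $C$ by $\cl^d(BC)$), the relation $A \perp_B C$ for $A,B,C\in\X$ then becomes the symmetric assertion $\cl^d(ABC) = \cl^d(AB)\coprod_{\cl^d(B)}\cl^d(BC)$, meaning that $\cl^d(AB)$ and $\cl^d(BC)$ meet in $\cl^d(B)$, are freely amalgamated over it, and have union exactly $\cl^d(ABC)$. In this form, Invariance and Symmetry are immediate, Compatibility is built in, and $\perp$ visibly refines $\ind^d$.

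For Monotonicity and Transitivity I would combine the corresponding axioms for $\ind^d$ (which satisfies (1)--(5) of Definition \ref{sir}) with bookkeeping on $d$-closures. Thus if $A \perp_B CD$, then $\cl^d(ABCD)$ is the free amalgam of $\cl^d(AB)$ and $\cl^d(BCD)$ over $\cl^d(B)$; intersecting this with the $d$-closed set $\cl^d(ABC)$, and using the free-amalgamation characterisation of $\ind^d$ together with its axioms to identify $\cl^d(ABC)\cap\cl^d(BCD)$ with $\cl^d(BC)$, one obtains both $A \perp_B C$ and $A \perp_{BC} D$. Transitivity is the reverse gluing: the free amalgam witnessing $A \perp_B C$ and the one witnessing $A \perp_{BC} D$ paste along $\cl^d(BC)$ into the free amalgam witnessing $A \perp_B CD$, once the intersection $\cl^d(AB)\cap\cl^d(BCD) = \cl^d(B)$ has been checked. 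This closure-tracking is the only place where a genuine (though routine) argument is needed; it runs exactly parallel to the verifications for $M_0$ behind Corollary \ref{cor1}, and is the step I expect to be the main obstacle.

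Existence and Stationarity then come straight from the generic construction of $M_f$. For Existence, given $A, B, C \in \X$, after taking $d$-closures we may assume $B \subseteq A \cap C$; the free amalgam $F = A \coprod_B C$ lies in $\C_f$ with $A, C \leq_d F$ by the free $\leq_d$-amalgamation property of the good function $f$, and embedding $F$ into $M_f$ over $C$ (legitimate since $C \leq_d M_f$ and $C \leq_d F$) produces a copy $A'$ of $A$ with $A' \perp_B C$. The identity-on-$B$ isomorphism $A \to A'$ extends to some $g \in G_B$ by $\leq_d$-homogeneity, so $gA \perp_B C$. For Stationarity, suppose $A_1, A_2, B, C \in \X$ with $B \subseteq A_i$, $A_i \perp_B C$, and $h : A_1 \to A_2 \in \F$ the identity on $B$. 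From $A_i \perp_B C$ we get $A_i \cap \cl^d(BC) = B$ and $\cl^d(A_iC) = A_i \coprod_B \cl^d(BC)$, so $h$ together with the identity on $\cl^d(BC)$ is well defined, and by functoriality of the free amalgam it is an isomorphism $\cl^d(A_1C) \to \cl^d(A_2C)$ between finite $d$-closed substructures of $M_f$. By $\leq_d$-homogeneity it extends to an automorphism of $M_f$, whose restriction to $\cl^d(A_1C) \in \X$ is a member $k$ of $\F$ containing $h \cup \id_C$, as required.
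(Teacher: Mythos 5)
Your verification is correct and follows exactly the route the paper intends but leaves unwritten: the paper marks Lemma \ref{perpsi} as ``straightforward'' with no proof, and the free-amalgam reformulation of $\perp$ you use is precisely the one the authors single out in the preceding definition (``$\cl^d(bC)$ is the free amalgam of $\cl^d(bA)$ and $C$ over $A$''). Your unwinding of Monotonicity and Transitivity via intersecting/gluing $d$-closed free amalgams, and your use of the free $\leq_d$-amalgamation property and $\leq_d$-homogeneity of $M_f$ for Existence and Stationarity, are the expected details.
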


We will use Theorem \ref{thmTZ} to show that, under some restrictions, the group  $G = \Aut(M_f)$ is simple. The proof is similar to that in the previous sections, but we need to make some modifications as the dimension function does not give rise to a stationary independence relation. 

Suppose $A \in \X$ and $b \in M_f$. We shall continue to say that $b$ is basic over $A$ if $b \not\in A$ and whenever $A \leq_d C \in \X$ and $d(b/C) < d(b/A)$, then $b \in C$. Recall also that $M_f$ is monodimensional if for all basic orbits $D = \loc(b/A)$ (for $A \in \X$) there is $B \in \X$ with $A \subseteq B$ and $M_f = \cl^d(B, D)$. In fact, in the examples below where we verify this, we will take $B = A$.

As before, we say that $g \in G$ is $d$-bounded over $A \in \X$ if there is $A \subseteq C \in \X$ and $b \in M_f$ which is basic over $C$ such that for all $b' \in \loc(b/C)$ we have $gb' \in \cl^d(b'C)$.

\begin{lemma} \label{notbdd2}Suppose $M_f$ is monodimensional and $g \in \Aut(M_f)$ is $d$-bounded (over some element of $\X$). Then $g = 1$.
\end{lemma}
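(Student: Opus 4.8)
The plan is to imitate, inside $\C_f$, the two-step argument used for $M_0$: first deduce from $d$-boundedness that $g$ setwise stabilises every sufficiently large $d$-closed set (the analogue of Proposition~\ref{unbdd}), and then refine this to $g = 1$ by adapting the proof of Theorem~\ref{nobdd}. For the first part, suppose $g$ is $d$-bounded over $A \in \X$, witnessed by $A \subseteq C \in \X$ and a $b$ basic over $C$ with $gb' \in \cl^d(b'C)$ for all $b' \in \loc(b/C)$. Using that $M_f$ is monodimensional — which lets us enlarge $C$ so that $M_f = \cl^d(C, \loc(b/C))$ — together with the Existence property available through the stationary independence relation $\perp$ of Lemma~\ref{perpsi}, the argument of Proposition~\ref{unbdd} (which nowhere uses stationarity of $\ind^d$) applies, possibly after replacing $g$ by $g^{-1}$, and produces a finite $d$-closed $E \supseteq A$ such that $g(B) = B$ for every $B \in \X$ with $E \subseteq B$; in particular $g$ stabilises $\cl^d(Eb)$ for all $b \in M_f$.

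For the second part I would first enlarge the base. Since $\cl^d(\emptyset) = \emptyset$ and $f$ is sublinear ($f'(t) \leq 1/t$ by Remark~\ref{frem}), adjoining to $E$ a large family of generic points which are pairwise independent for $\perp$ and carry no $R$-relations produces a finite $d$-closed $A \supseteq E$ with $\delta(A) - f(\vert A\vert)$ arbitrarily large, still stabilised by $g$ along with every $B \supseteq A$ in $\X$. Fixing such an $A$ whose surplus exceeds the (absolutely bounded) predimension drop occurring below, I would then run the proof of Theorem~\ref{nobdd} verbatim with $M_0$ replaced by $M_f$ and $\leq$ by $\leq_d$: Step~1 shows any $b$ with $Ab \leq_d M_f$ and $\delta(b/A) = n$ is fixed by $g$ (two cases, $r \geq 3$ with $m = n = 1$ and $r = 2$ with $n > m$, the case $r \geq 3$, $n \geq m$ reducing to the latter by putting fixed elements of $A$ in the extra coordinates of $R$), and Step~2 shows that every $c \in M_f$ lies in the $d$-closure of $A$ together with finitely many such generic points; hence $g = 1$.

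The one genuinely new difficulty, and the step I expect to be the main obstacle, is checking that every finite structure constructed in Steps~1 and~2 lies in $\C_f$ and not merely in $\C_0$. For the bigger structures this is exactly what the choice of $A$ with large predimension surplus provides, since each of them differs from $A$ (or from a $d$-closed set obtained from $A$ by adjoining boundedly many points) by a bounded number of vertices, while its predimension decreases only by a bounded amount. For the few small subconfigurations — a single $R$-edge, a short generic path, a minimal simply-algebraic extension — one verifies $\delta(X) \geq f(\vert X\vert)$ directly from Assumption~\ref{ass} (note $f(1) = n$) and the growth bound $f'(t) \leq 1/t$ of Remark~\ref{frem}; this, together with the relation counts in Steps~1 and~2, is where the hypotheses that $n > m$ when $r = 2$ and $n \geq m$ when $r \geq 3$ are used.
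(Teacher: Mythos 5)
Your first paragraph matches the paper exactly: Proposition~\ref{unbdd} does not use stationarity of $\ind^d$, so it applies to $M_f$ and yields $E\in\X$ with $g$ setwise stabilising every $B\supseteq E$ in $\X$ (no $g\mapsto g^{-1}$ needed). From this the paper immediately reads off that $g$ fixes every $b$ with $Eb\leq_d M_f$, since $Eb$ and $E$ are both stabilised.

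The second half is where you diverge from the paper, and where I see a genuine gap. You propose to run the proof of Theorem~\ref{nobdd} verbatim in $M_f$, padding $A$ with generic points until $\delta(A)-f(|A|)$ is large. But the surplus trick only controls large configurations; it does nothing for the \emph{small} subsets that $\C_f$ membership also constrains, and — more seriously — the key objects in Step~1, Case~2 of Theorem~\ref{nobdd} are simply algebraic extensions of $Ab$, and these simply do not exist in $M_f$ over the sets you need. In the $\omega$-categorical construction, $\cl^d$ is uniformly locally finite and $\leq_d$ means $d$-closed; a $d$-closed set has multiplicity zero for every msa type, so there is no sa extension inside $M_f$ of any $Ab\leq_d M_f$. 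You cannot finesse this by dropping back to the weaker relation $\leq$ either, since $M_f$ is only $\leq_d$-homogeneous and the realisability-in-$M_f$ arguments of Theorem~\ref{nobdd} (which lean on the free amalgamation property of $(\C_0,\leq)$ and infinite multiplicities) do not transfer. The paper instead proves the lemma by a short direct argument tailored to $\C_f$: for distinct $c,c'$ it builds a small auxiliary configuration $U$ — a single hyperedge on $c$ and $r-1$ fresh points when $r>2$, or a path of length $>m/(n-m)$ from $c$ to a $\perp$-generic $b$ when $r=2$ — checks by hand (using Assumption~\ref{ass} and free $\leq_d$-amalgamation) that $U\in\C_f$, observes that $g$ fixes the new vertices and stabilises $E$, $C=\cl^d(E,c,c')$ and $U$, and concludes $gc\neq c'$ because $c$, unlike $c'$, carries a relation (or a path) into $U\setminus C$; letting $c'$ range gives $gc=c$. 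This avoids sa extensions entirely. If you want to salvage your route, you would need to replace the sa-extension step of Theorem~\ref{nobdd} by an argument that stays inside $\C_f$, which is essentially what the paper's construction accomplishes.
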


\begin{proof} By  Proposition \ref{unbdd} there is  $E\in \X$ such that $g$ stabilizes every $B \in \X$ containing $E$. In particular, $g$ fixes all $b \in M_f\setminus E$ for which $Eb \leq_d M_f$. 

Let $c,c'$ be distinct elements of $M_f$ and $C = \cl^d(E, c, c')$. First suppose that $r > 2$. Consider the structure $B$ consisting of $c$ together with $r-1$ points $b_1, \ldots, b_{r-1}$ such that $R[B]$ is the single relation $\{c, b_1, \ldots, b_{r-1}\}$. Then $B \in \C_f$ and $c \leq_d B$. By Assumption \ref{ass}, the free amalgam $U$ of $C$ and $B$ over $c$ is in $\C_f$, so we may suppose $U \leq_d M_f$. One calculates that $Eb_i \leq_d U$ for each $i$ (this uses that $r >2$), therefore the $b_i$ are fixed by $g$. As $g$ stabilizes $E, C$ and $U$, it is then clear that $gc \neq c'$. But this holds for all $c'\neq c$, so in fact, $gc = c$.

Now suppose that $r = 2$ (and $n > m$). Take $b \perp C$. Suppose $c, e_1, \ldots, e_s, b$ is a simple path with endpoints $c, b$. If $s > m/(n-m)$ then $cb \leq_d ce_1\ldots e_s b$. As $cb \leq_d Cb$ we may use free amalgamation over $cb$ to find such a path with $U = Ce_1\ldots e_s b \leq_d M_f$. Then $gb = b$ and $g$ stabilizes $E, C, U$. There is a path from $b$ to $c$ whose internal vertices are in $U\setminus C$, but there is no such path to $c'$. So $gc \neq c'$, and it follows that $gc = c$.
\end{proof}

\begin{proposition} \label{59}Suppose $M_f$ is monodimensional, $A \in \X$ and $D$ is a basic orbit over $A$. Suppose $1 \neq g \in \Aut(M_f/A)$. 
\begin{enumerate}
\item If $c \in M_f$ and $A \subseteq B \in \X$, then there is $c' \in \loc(c/B)$ with $gc'\ind^d_B c'$.
\item There is $\h \in G_A$ such that the commutator $\g = [g, \h]$ moves almost maximally over $A$ with respect to $\perp$, that is, if $a'\in M_f$ and $A \subseteq X \in \X$,  there is $a \in \loc(a'/X)$ such that $\g a \perp_X a$.
\end{enumerate}
\end{proposition}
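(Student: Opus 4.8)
The plan is to mirror the proofs of Proposition~\ref{thm1} and Theorem~\ref{thm2}, but to run the commutator construction with respect to the stationary independence relation $\perp$ of Lemma~\ref{perpsi} rather than the (non-stationary) relation $\ind^d$. The starting observation is that, since $g\neq 1$ and $M_f$ is monodimensional, Lemma~\ref{notbdd2} tells us that $g$ is not $d$-bounded over any element of $\X$; equivalently $g$ is unbounded over every $C\in\X$, in particular over $A$ and over every $B\in\X$ with $A\subseteq B$. Note also that $g$ fixes $A$ pointwise, so $\g=[g,\h]$ automatically lies in $G_A$ for every $\h\in G_A$; the only thing we must arrange about $\h$ is the moving-almost-maximally property of $\g$.

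For part~(1) I would reproduce the argument of Proposition~\ref{thm1}(1) essentially verbatim: that argument uses only the unboundedness of $g$, the Existence axiom for $\ind^d$ (which holds in $M_f$), and the pregeometry on a basic orbit, and it does \emph{not} invoke stationarity of $\ind^d$. The one adjustment is that monodimensionality of $M_f$ only yields some $B_0\in\X$ with $A\subseteq B_0$ and $\cl^d(B_0,D)=M_f$, rather than $B_0=A$. So, given $B\supseteq A$ as in the statement, I would first pass to $B_1=\cl^d(B\cup B_0)$, run the unmodified argument over $B_1$ to obtain $c_1\in\loc(c/B_1)$ with both $c_1\ind^d_B B_1$ and $gc_1\ind^d_{B_1}c_1$, and then descend: Symmetry, Transitivity and Monotonicity of $\ind^d$ give $gc_1\ind^d_B c_1$, while $c_1\in\loc(c/B_1)\subseteq\loc(c/B)$.

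For part~(2) I would build $\h\in G_A$ as the union of an increasing chain of partial automorphisms with $d$-closed domains and images, arranging as usual that every $X\in\X$ with $A\subseteq X$ occurs as a domain, and enforcing in the ``forth'' steps the $\perp$-version of moving almost maximally. At the stage where $\dom h=X$ and for a chosen basic orbit $\loc(a'/X)$ over $X$, the forth step must extend $h$ so that, once $\h$ is complete, $\g a\perp_X a$ for some $a\in\loc(a'/X)$; the case of a general (not necessarily basic) $a'$ is then reduced to the basic case by the pregeometry argument underlying part~(1), carried through with $\perp$ in place of $\ind^d$. The skeleton of the forth step is that of Theorem~\ref{thm2}: using unboundedness of $g$, choose $b$ in $h(\loc(a'/X))$ with $gb\ind^d_{hX}b$ and extend $h$ to send $a'$ to $b$; but where Theorem~\ref{thm2} picks a generic preimage of $gb$ via Existence for $\ind^d$, I would pick it via Existence for $\perp$ (valid by Lemma~\ref{perpsi}) and, using the free $\leq_d$-amalgamation property of $\C_f$, also arrange that the $d$-closures thereby introduced are freely amalgamated in the configuration that $\perp$ requires. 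Extending $h$ once more and re-running the closing computation of Theorem~\ref{thm2} with $\perp$ (and its Transitivity) replacing $\ind^d$ then delivers $\g a\perp_X a$.

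The main obstacle is exactly the mismatch between $\ind^d$ and $\perp$ at the single step controlled by unboundedness of $g$: that step supplies only $\ind^d$-independence of $gb$ from $b$ over $hX$, with no control over $\cl^d(hX,b,gb)$, and because $\ind^d$ is not stationary this cannot be upgraded to $\perp$ for free. Overcoming it forces us to insert an extra generic $\perp$-extension into the construction and to check that free amalgamation of the relevant $d$-closures is preserved when one pulls back through $g$ and forms the commutator; this is precisely where the free $\leq_d$-amalgamation of $\C_f$ (Assumption~\ref{ass}) and the compatibility of $\perp$ with $\cl^d$ (Lemma~\ref{perpsi}) carry the argument.
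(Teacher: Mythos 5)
Your high-level strategy matches the paper's: run the commutator construction of Theorem~\ref{thm2} but let $\perp$ carry the stationarity, and you correctly locate the crux --- unboundedness of $g$ supplies only $\ind^d$-information, which must somehow be upgraded to $\perp$. Part~(1) is essentially as the paper proves it (citing Lemma~\ref{notbdd2} and Proposition~\ref{thm1}); your concern about monodimensionality delivering $B_0\supseteq A$ rather than $A$ is a legitimate observation, but the paper does not address it and the verified examples always realise it with $B_0=A$.

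For part~(2) there is a genuine gap, and it sits exactly at the point you flag as the ``main obstacle'' but then treat too lightly. The plan of ``re-running the closing computation of Theorem~\ref{thm2} with $\perp$ (and its Transitivity) replacing $\ind^d$'' cannot close, because the closing step of Theorem~\ref{thm2} is: ``As $a$ is basic over $B$ and $a\notin\cl^d(B,g^{-1}B)$, we have $g^{-1}B\ind^d_B a$.'' That inference is purely dimension-theoretic, and its $\perp$-analogue $g^{-1}B\perp_B a$ simply does not follow from basicness --- $\perp$ carries free-amalgamation information about $\cl^d(g^{-1}B,B,a)$ that dimension does not control. The paper therefore organises the forth step quite differently, making all the $\perp$-arrangements \emph{before} invoking unboundedness: first $h$ is extended so that $U=\dom(h)$ contains $X$, $gX$ and $h^{-1}ghX$; then, combining $\perp$-Existence with part~(1), one chooses $a\in\loc(a'/X)$ with \emph{both} $a\perp_X U,g^{-1}U$ and $ga\ind^d_U a$ (Claim~1); the first of these, applying $g$, gives $U\perp_{gX}ga$, which is the seed of the final transitivity. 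The mirror choices are made for the image $b=ha$, the ``correcting'' preimage $e$ of $gb$ is chosen with $e\perp_{U,a}ga$, a dimension-independence computation (Claim~2) then yields $\cl^d(U,a)\cap\cl^d(U,ga,e)=U$, and only then does a transitivity-for-$\perp$ argument over the bases $h^{-1}ghX$ and $gX$ inside $U$ (Claim~3) produce $e\perp_{gX}ga$ and hence $[g,h]a\perp_X a$. None of these ingredients --- the enlargement of $U$, the precise $\perp$-conditions on $a$, $b$, $e$, and the two intermediate claims --- appear in your sketch, and without them the step stalls. (The ``reduce to basic $a'$ via the pregeometry argument'' detour is also not what the paper does: the forth step handles arbitrary $a'$ directly, and basicness is only used via part~(1) in securing the $\ind^d$-half of Claim~1.)
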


\begin{proof} (1) This follows from Lemma \ref{notbdd2} and Proposition \ref{thm1}.

(2) We build $\h$ by a back-and-forth construction as in the first part of the proof of Theorem \ref{thm2}. During the `forth' step we shall ensure that $\g$ moves almost maximally with respect to $\perp$ (over $A$). So suppose we have constructed a partial automorphism $h : U \to V$ (fixing $A$) and $X$, $a'$ are given. By extending $h$ arbitrarily, we may assume that $U \supseteq X, gX, h^{-1}ghX$.

\smallskip

\textit{Claim 1:\/} We can choose $a \in \loc(a'/X)$ such that $a \perp_X U, g^{-1}U$ and $ga \ind^d_U a$.

To do this, take $a'' \in \loc(a'/X)$ with $a'' \perp_X U, g^{-1}U$ (by Extension). Then by (1), there is $a\in \loc(a''/\cl^d(U, g^{-1}U))$ with $ga \ind^d_{U, g^{-1}U} a$. It follows from Transitivity (for $\ind^d$) that $ga \ind^d_{U} a$, as required.

Similarly, we can take $b \in h\loc(a'/U)$ with $b \perp_{hX} {V, g^{-1}V}$ and $gb \ind^d_V b$. Extend $h$ by setting $ha = b$.

Note that $h^{-1}\loc(gb/\cl^d(V, b))$ is an orbit over $\cl^d(U, a)$. We choose $e$ in this with $e \perp_{U, a} ga$ and extend $h$ further by setting $he = gb$.

\smallskip

We have that $\cl^d(e, U, a) \perp_{U,a} \cl^d(ga, U, a)$. Intersecting this $d$-closed free amalgam with $Y = \cl^d(U, e, ga)$ we obtain another $d$-closed free amalgam, so $e \perp_Z ga$, where $Z = \cl^d(U, a) \cap Y$. 

\smallskip

\textit{Claim 2:\/} We have $Z = U$, so  $e \perp_U ga$.

By Claim 1 we have $d(ga, a/U) = d(ga/U) + d(a/U)$, and similarly $d(gb/V, b) = d(gb/V)$. So we have:
\[d(e/U,a,ga) = d(e/U,a) = d(gb/V, b) = d(gb/V) = d(e/U),\]
where the second and fourth of these come from applying $h$. It then follows that $a, ga, U$ are $d$-independent over $U$, so $a \ind^d_U ga , e$. In particular, $\cl^d(u, a) \cap \cl^d(U, ga, e) = U$.

\smallskip

\textit{Claim 3:\/} We have $e\perp_{gX} ga$. 

By Claim 1, $U \perp_{gX} ga$ so $\cl^d(U, ga) = U \coprod_{gX} E_2$, where $E_2 = \cl^d(gX, ga)$. 

By choice of $b$ we have $gb \perp_{ghX} gV, V$, so (applying $h^{-1}$) $e \perp_{h^{-1}ghX} U$. Thus $\cl^d(U, e) = U \coprod_{h^{-1}ghX} E_1$ where $E_1 = \cl^d(h^{-1}ghX, e)$. 

Let $A_i = E_i \cap U$. So $A_1 = h^{-1}ghX$ and $A_2 = gX$. Let $W = \cl^d(A_1, A_2)$. By Claim 2, $U \cup E_1 \cup E_2 \leq_d M_f$. We also have $W \cup E_1 \cup E_2 \leq_d U \cup E_1 \cup E_2$, so $E_1 \perp_W E_2$, that is:
\[ E_1 \perp_{A_1, A_2} E_2.\]
As $a \perp_X g^{-1}U$, we have (applying $g$) $E_2 \perp_{A_2} U$. So $E_2 \perp_{A_2} E_1$. By Transitivity we obtain $E_1 \perp_{A_2} E_2$, which gives the claim. 

By applying $g^{-1}$ to  Claim 3 we obtain:
\[ [g,h]a \perp_X a\]
which is what we wanted to do in this step of the construction.
\end{proof}

The following is the main result of this section (we are still assuming \ref{ass} here).

\begin{theorem} \label{511} Suppose $M_f$ is monodimensional and $1 \neq g \in \Aut(M_f)$. Then every element of $\Aut(M_f)$ is a product of 192 conjugates of $g^{\pm 1}$. In particular, $\Aut(M_f)$ is a simple group.
\end{theorem}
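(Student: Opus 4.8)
The plan is to run the argument of Theorem~\ref{thm2}, but with the stationary independence relation $\perp$ of Lemma~\ref{perpsi} playing the role that $\ind^d$ played there (recall $\ind^d$ itself is not stationary on $M_f$); since $\perp$ is reached only through the commutator produced in Proposition~\ref{59}(2), the proof uses one extra commutator compared with Theorem~\ref{thm2}, which is exactly why the bound doubles from $96$ to $192$. \emph{Step 1: descend into a point stabiliser.} Using that $M_f$ is monodimensional, fix $A\in\X$ and a basic orbit $D$ over $A$ with $\cl^d(A,D)=M_f$. By a back-and-forth construction, extend the identity map on the finite set $A\cup gA$ to an automorphism $\h_0\in\Aut(M_f)$ with $g_0:=[g,\h_0]\neq 1$; since $g\neq 1$, a single step of the construction can be used to force $g_0$ to move a point. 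As $\h_0$ fixes $A\cup gA$ pointwise, $g_0\in G_A\setminus\{1\}$.

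\emph{Step 2: count inside $G_A$.} Apply Proposition~\ref{59}(2) with $g_0$ in the role of $g$: there is $\h_2\in G_A$ so that $\g:=[g_0,\h_2]\in G_A$ moves almost maximally over $A$ with respect to $\perp$. Pass to the expansion $M_f^A=(M_f,a)_{a\in A}$, whose automorphism group is $G_A$, whose invariant finitary closure $X\mapsto\cl^d(X\cup A)$ has $\emptyset$-closure $A$, and on which $\perp$, restricted to bases containing $A$, is a stationary independence relation compatible with this closure. In $M_f^A$ the phrase ``$\g$ moves almost maximally over $A$ with respect to $\perp$'' is precisely ``$\g$ moves almost maximally'' in the sense of Definition~\ref{d26}(1), so Theorem~\ref{thmTZ} applies in $M_f^A$ and every element of $G_A$ is a product of $16$ conjugates of $\g$ by elements of $G_A$.

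\emph{Step 3: globalise and unwind.} Put $H_1=G_A$. Since $\cl^d(\emptyset)=\emptyset$ we have $\Aut(M_f/\cl^d(\emptyset))=\Aut(M_f)$, and the last two paragraphs of the proof of Theorem~\ref{thm2} carry over with $\perp$ in place of $\ind^d$: choosing $A'$ in the $\Aut(M_f)$-orbit of $A$ with $A'\perp A$ and $H_2=G_{A'}$, one gets $\Aut(M_f)=H_1H_2H_1$ from Existence and Stationarity of $\perp$. Thus every element of $\Aut(M_f)$ is a product of $3\cdot 16=48$ conjugates of $\g$. Finally $\g=[g_0,\h_2]=g_0^{-1}g_0^{\h_2}$ and $g_0=[g,\h_0]=g^{-1}g^{\h_0}$, so any conjugate of $\g$ is a product of two conjugates of $g_0^{\pm 1}$ and any conjugate of $g_0^{\pm 1}$ is a product of two conjugates of $g^{\pm 1}$; hence every element of $\Aut(M_f)$ is a product of $48\cdot 2\cdot 2=192$ conjugates of $g^{\pm 1}$. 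Simplicity is then immediate: a non-trivial normal subgroup contains some $g\neq 1$, hence all conjugates of $g^{\pm 1}$, hence all of $\Aut(M_f)$.

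\emph{Where the difficulty lies.} The genuinely new work has already been done in Proposition~\ref{59}(2) (which in turn rests on Lemma~\ref{notbdd2}); the rest is assembly. The step needing real care is the relativisation in Step~2 — one must check that ``moves almost maximally over $A$ with respect to $\perp$'' is literally the hypothesis of Theorem~\ref{thmTZ} for the expansion $M_f^A$, i.e. that $\perp$ does restrict to a stationary independence relation compatible with $\cl^d(\,\cdot\cup A)$ there and that $G_A$ fixes the $\emptyset$-closure of this restricted closure. A minor additional point is arranging, in Step~1, that $[g,\h_0]$ is at once non-trivial and contained in a point stabiliser that carries a spanning basic orbit.
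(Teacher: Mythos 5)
Your proposal is correct and follows essentially the same route as the paper's own proof: first descend via a commutator into $G_A$, then apply Proposition~\ref{59}(2) to produce a second commutator moving almost maximally with respect to $\perp$, feed this into Theorem~\ref{thmTZ} to get $16$ conjugates inside $G_A$, and finally use the $H_1H_2H_1$ decomposition; the $3\cdot16\cdot2\cdot2=192$ bookkeeping matches the paper's $3\cdot(16\cdot4)$. The one place you go beyond the paper is the explicit relativisation to $M_f^A$ with the closure $X\mapsto\cl^d(X\cup A)$ so that ``moves almost maximally over $A$ with respect to $\perp$'' becomes literally the hypothesis of Theorem~\ref{thmTZ} --- the paper applies Theorem~\ref{thmTZ} to $G_A$ without spelling this out, so this is a welcome sharpening rather than a deviation.
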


\begin{proof} Let $G = \Aut(M_f)$. Note that $\cl^d(\emptyset) = \emptyset$. Let $A \in \X$ be such that there is a basic orbit $D$ over $A$. It is easy to show that there is a non-identity commutator $g_1$ of $g$ which fixes every element of $A$. By Proposition \ref{59}, by taking a further  commutator with an element of $G_A$ we obtain some $g_2 \in G_A$ which moves almost maximally over $A$ (with respect to $\perp$). It follows from Theorem \ref{thmTZ} that every element of $G_A$ is a product of 16 conjugates of $g_2$. As $g_2$ is a product of 4 conjugates of $g^{\pm 1}$, it follows that every element of $G_A$ is a product of 64 conjugates of $g^{\pm 1}$. As in the final part of the proof of Theorem \ref{thm2}, $G$ is the product of three conjugates of $G_A$: hence the result.
\end{proof}

\medskip

We believe that under the conditions of Assumption \ref{ass}, the structure $M_f$ should be monodimensional. However, proving this  appears to require an extremely technical argument and we   only have  a full proof in some special cases.

\begin{example}\label{e511} \rm
Suppose that $r \geq 3$ and $m = n = 1$; so $\delta(A) = \vert A \vert - \vert R[A]\vert$. Suppose $f$ is as in Remarks \ref{frem} and also that Assumption \ref{ass} holds. 

If $A \in \X$ and $b \in M_f \setminus A$ then $d(b/A) = 1$ so $b$ is basic over $A$. Let $D = \loc(b/A)$. We show that $M_f = \cl^d(A, D)$. 

\medskip

\textit{Step 1.\/} There is $c \in \cl^d(A, D)$ with $c \perp A$.

Let $B = \cl^d(A, b)$ and let $F$ be the free amalgam of copies $B_1, \ldots, B_{r-1}$ of $B$ over $A$, with $b_i \in B_i$ being the copy of $b$ inside $B_i$. Let $E = F \cup \{c\}$ where $R(b_1,\ldots, b_{r-1}, c)$ holds and this is the only relation in $E$ involving $c$. We show that:
\begin{enumerate}
\item[(i)] $E \in \C_f$;
\item[(ii)] $B_i \leq_d E$;
\item[(iii)] $Ac \leq_d E$.
\end{enumerate}
Note that once we have this, it follows that we may assume $E\leq_d M_f$ and so (by (ii))  $b_1,\ldots, b_{r-1} \in D$. Moreover, $c \in \cl^d(A, b_1,\ldots, b_{r-1})$ and (by (iii)) $A \perp c$, which finishes Step 1.

For (i), note of course that $F \in \C_f$. Let $Y \subseteq E$. We want to show that $\delta(Y) \geq f(\vert Y\vert)$. We may assume that $c,b_1,\ldots, b_{r-1} \in Y$ and $Y \leq_d E$. In the following, if $C \subseteq E$, let $Y_C = Y \cap C$. 

If $Y_A = \emptyset$ then $Y$ is obtained by free amalgamation over the $b_i$ from $\{b_1,\ldots, b_{r-1}, c\}$ and the $Y_{B_i}$, so is in $\C_f$. So we may assume that $Y_A \neq \emptyset$. Also, if $\vert Y_{B_i} \setminus A\vert = 1$ for all $i$, then as $d(b_i / A) = 1$, there are no relations between $Y_A$ and $\{b_1,\ldots, b_{r-1}, c\}$ and $Y$ is again a free amalgam. So we may also assume that $2 \leq \vert Y_{B_1}\setminus A\vert \geq \vert Y_{B_i}\setminus A \vert$. In particular, $\vert B_1 \vert \geq 3$.

Now we compute that 
\[ \delta(Y) = \delta(Y_F) = \delta(Y_{B_1}) + \sum_{i \geq 2} \delta(Y_{B_i}/Y_{B_1}) \leq \delta(Y_{B_1}) + (r-2).\]
Also
\[ \vert Y \vert = 1 + \vert Y_{B_1}\vert + \sum_{i \geq 2} \vert Y_{B_i} \setminus A\vert \leq 1+ \vert Y_{B_1}\vert + (r-2)\vert Y_{B_1}\setminus A\vert.\]

As in Remarks \ref{frem}
\[f(\vert Y\vert) \leq f(\vert Y_{B_1}\vert) + \log\left(\frac{\vert Y_{B_1}\vert + (r-2)\vert Y_{B_1}\setminus A\vert}{\vert Y_{B_1}\vert - 1}\right).\]

So to prove that $\delta(Y) \geq f(\vert Y\vert)$ it will suffice to show that 
\[ r-2 \geq \log\left(\frac{\vert Y_{B_1}\vert + (r-2)\vert Y_{B_1}\setminus A\vert}{\vert Y_{B_1}\vert - 1}\right).\]

As $\vert Y_A \vert \geq 1$ and $\vert Y_{B_1}\setminus Y_A\vert \geq 2$ we have:

\[ \frac{\vert Y_{B_1}\vert + (r-2)\vert Y_{B_1}\setminus A\vert}{\vert Y_{B_1}\vert - 1} \leq (r-1)+ \frac{1}{2},\]
and the required inequality holds as $r \geq 3$. This completes the proof of (i).

We now verify (ii); without loss we take $i = 1$. Suppose $B_1 \subset Y \subseteq   E$. We need to show that $\delta(B_1) < \delta(Y)$. We may assume that $Y \leq_d E$ and also that $b_1,\ldots, b_{r-1}, c \in Y$ (otherwise what we want follows from free amalgamation). But then $Y = E$ and $\delta(E) = \delta(B_1) + (r-2) > \delta(Y)$. 

For (iii), suppose $Ac \subset Y \subseteq E$. If $Y$ does not contain all of $b_1,\ldots, b_{r-1}$, then $\delta(Y) = \delta(Y_F) +1 > \delta(A) + 1 = \delta(Ac)$. On the other hand, if $Y$ contains all of $b_1,\ldots, b_{r-1}$, then $\delta(Y) \geq \delta(A) + (r-1) > \delta(Ac)$. This completes Step 1.

\medskip

From Step 1 and Stationarity, it follows that $\cl^d(A, D) \supseteq \{ e \in M_f : e \perp A\}$. So to show that $\cl^d(A, D) = M_f$ it will suffice to show:

\medskip

\textit{Step 2.\/} If $a \in M_f \setminus A$, there exist $e_1,\ldots, e_{r-1} \in M_f$ with $e_i \perp A$ and $a\in \cl^d(A, e_1,\ldots, e_{r-1})$. 

To see this, let $C = \cl^d(A, a)$ and let $F$ be the free amalgam of this over $a$ with the structure on points $\{a,e_1,\ldots, e_{r-1}\}$ which has a single relation $R(a,e_1,\ldots, e_{r-1})$. As $A \leq_d F$, we can assume that $F \leq_d M_f$. Moreover, an easy calculation shows that $Ae_i \leq_d F$ and so $e_i \perp A$ for all $i$. But $a \in \cl^d(e_1,\ldots, e_{r-1})$ so we have completed Step 2.

\end{example}

\begin{example}\label{e512} \rm
Suppose as in \cite{Hrpp} that $r = 2$, $n = 2$ and $m=1$. So we are considering graphs $A$ and $\delta(A) = 2\vert A \vert - e(A)$ where $e(A)$ denotes the number of edges in $A$.  We take $f(0) = 0$, $f(1) = 2$, $f(2) = 3$ and $f'(x) \leq 1/x$ non-increasing for $x \geq 2$ as in Remarks \ref{frem}. So if $A \in \C_f$, then vertices and edges are $d$-closed in $A$. Moreover $f(x) \leq 3+ \log(x-1)$ for $x \geq 2$; more generally, $f(y) \leq f(x) + \log(y-1) - \log(x-1)$ for $2 \leq x \leq y$.

By free amalgamation, $\C_f$ contains paths $P_\ell$ of arbitrary length $\ell$. One easily computes that if $u, v$ are the endpoints of $P_\ell$ then $uv \leq_d P_\ell$ iff $\ell \geq 3$. In particular (using free amalgamation), $\C_f$ contains a $6$-cycle, but need not contain shorter cycles.

The strategy for verifying monodimensionality is as in the previous example, but the details are considerably more complicated. Suppose $A \in \X$ and $\loc(b/A)$ is any $G_A$-orbit on $M_f \setminus A$. We shall show that there exist $b_0 , \ldots, b_{s-1} \in \loc(b/A)$ and $c \in \cl^d(b_0, \ldots, b_{s-1}, A)$ such that $c \perp A$. So $\cl^d(A, \loc(b/A))$ contains $\{e : e\perp A\}$. We then observe that $\cl^d(A, \{e : e\perp A\}) = M_f$.

In order to do this, we construct various graphs and verify that they are in $\C_f$. 

\medskip

\textit{Step 1.\/} Let $s \in \N$ be sufficiently large. Construct a graph with vertices $C = \{c_0,\ldots, c_{s-1}\}$ and $D = \{d_0,\ldots, d_{s-1}\}$ such that:
\begin{itemize}
\item $c_0,d_0, c_1, d_1, \ldots, c_{s-1}, d_{s-1}$ is a $2s$-cycle;
\item the remaining edges on $CD$ form a single $s$-cycle on $D$ and $CD$ has girth at least $6$.
\end{itemize}

To do this, we can take adjacencies in $D$ to be $d_i \sim d_{i+\ell}$ where the indices are read modulo $s$ and $\ell$ is chosen coprime to $s$ and $6 \leq \ell < s/12$. 

\textit{Step 2.\/} We have $CD \in \C_f$.

Note that  as $s$ is large, $\delta(CD) = s > 3 + \log(2s-1) \geq f(2s) = f(\vert CD\vert)$. Let $X \subset CD$. We need to show that $\delta(X) \geq f(\vert X \vert)$. We may assume that $X \leq_d CD$. Write $X_D = D \cap X$ and use similar notation throughout what follows. We have $X_D \subset D$, so 
\[ \delta(X_D) \geq 2 \vert X_D \vert - (\vert X_D \vert -1) = \vert X_D \vert + 1.\]
Consider the valencies of vertices in $X_C$ within $X$. There are at most $\vert X_D\vert -1$ of valency 2 and those of valency at most 1 contribute at least 1 to $\delta(X /X_D)$. Thus
\[ \vert X_C \vert \leq \delta(X/X_D) + \vert X_D\vert -1,\]
so 
\[ \delta(X) \geq \vert X_C\vert - \vert X_D\vert + 1 + \delta(X_D) \geq \vert X_C\vert + 2.\]
Also,
\[\delta(X) = 2\vert X_C\vert + 2\vert X_D\vert -e(X_C , X_D) - e(X_D) \geq \delta(X_D)\]
as $e(X_C, X_D)$, the number of edges between $X_C$ and $X_D$, is at most $2\vert X_C\vert$. So
\[ \delta(X) \geq \delta(X_D) \geq \vert X_D \vert +1.\]
We therefore obtain:
\[\delta(X) \geq \frac{1}{2}(\vert X \vert +3).\]
As $f(x) \leq 3 +2 \log(x-1)$, we have $\delta(X) \geq f(\vert X \vert)$ if $\vert X\vert \geq 7$. If $\vert X \vert  \leq 6$ then $X$ is either a $6$-cycle or has no cycles, so is in $\C_f$.

\smallskip

\textit{Step 3.\/} If $X \leq_d CD$ and $X$ is the $d$-closure in $CD$ of $X_C$, then $\vert X\vert \leq 4\vert X_C\vert - 3$.

\smallskip

This follows from the fact that $0 \geq \delta(X/X_C) \geq \frac{1}{2}(\vert X\vert +3) - 2 \vert X_C\vert$.

\smallskip

\textit{Step 4.\/} Let $B$ consist of copies $B_0, \ldots, B_{s-1}$ of $B' = \cl^d(A, b)$ freely amalgamated over $A$, with $b_i$ the copy of of $b$ inside $B_i$. Let $E = B \cup C \cup D$ with edges as in $B$, $C \cup D$ and additional edges $b_i \sim c_i$ for $i = 0,\ldots, s-1$. Note that $\delta(E) = \delta(A) + s\delta(B'/A) = \delta(B)$ and $\vert E \vert  = \vert A \vert + s\vert B'\setminus A\vert + 2s = \vert A \vert + s(\vert B'\setminus A\vert + 2)$. For sufficiently large $s$ we have $\delta(E) \geq f(\vert E \vert)$ (by the logarithmic growth of $f$). 

Suppose $Y \subset E$; we claim that $\delta(Y) \geq f(\vert Y \vert)$, so $E \in \C_f$. We may assume that $Y \leq_d E$.  It is clear that $E$ is the free amalgam of $BC$ and $CD$ over $C$ and it is easy to check that $C \leq_d BC$. So $Y_C \leq_d Y_{BC}$. 

Let $Y_C'$ be the $d$-closure of $Y_C$ inside $CD$. So $Y_C' \subseteq Y_{CD}$ and $Y_C' \cap C = Y_C$. Then $Y_B \cup Y_C'$ is a free amalgam over $Y_C$ and $Y_C' \leq_d Y_B \cup Y_C'$. Moreover, $Y_C' \leq Y_{CD}$; so it will suffice to show that $Y_B \cup Y_C' \in \C_f$. Thus we may assume $Y_C' = Y_{CD}$. In particular, by Step 3, we may assume that $\vert Y_{CD}\vert \leq 4t - 3$, where $t = \vert Y_C \vert$. We can assume $t \geq 2$.

We may assume that $\delta(Y_{B_i}/Y_A) \leq 1$ for all $i$. Then we may further assume that  $b_i \in Y$ iff $c_i \in Y$. (If $c_i \in Y$ and $b_i \not\in Y$, then adding $b_i$ into $Y$ increases the size of $Y$ without increasing $\delta$; conversely if $b_i \in Y$ but $c_i$ is not, then $Y_{B_i}$ is freely amalgamated with the rest of $Y$ over $Y_A$.) Similarly we can assume that if $Y_{B_i} \supset Y_A$ then $b_i \in Y_i$. It follows that $\delta(Y_B/Y_A) = t$. 

Choose $i$ such that $\vert Y_{B_i} \setminus Y_A\vert $ is as large as possible; say $i = 1$ and the size is $k$. Then 
\[ \vert Y\vert  = \vert Y_B\vert + \vert Y_{CD}\vert \leq \vert Y_{B_1}\vert + (t-1)k + 4t-3.\]
Also 
\[ \delta(Y) = \delta(Y_B) + \delta(Y_{CD}) - e(Y_B, Y_C) \geq (\delta(Y_{B_1}) + (t-1)) + (t+2) - t \]
using the inequality $\delta(Y_{CD}) \geq t+2$ from Step 2, and so:
\[\delta(Y) \geq \delta(Y_{B_1}) + t + 1.\]

So it will suffice to show that 
\[\delta(Y_{B_1}) + t + 1 \geq f(\vert Y_{B_1}\vert + (t-1)k + 4t-3).\]
By the logarithmic nature of $f$, and $\delta(B_1) \geq f(\vert B_1\vert)$, this will follow from:
\[t+1 \geq \log((t-1)(k+4)) - \log(\vert Y_{B_1}\vert - 1).\]
It is easily checked that this is the case (as $t \geq 2$ and $\vert Y_{B_1} \vert \geq k+1)$. This finishes the proof that $E \in \C_f$.

\smallskip 

\textit{Step 5.\/} If $e \in D$, then $Ae \leq_d E$. To see this, let $Ae \subset X \subseteq E$. As $E$ is a free amalgam over $C$
\[\delta(X) = \delta(X_{BC}/X_C) + \delta(X_{CD}).\] 
It is straightforward to see that this is greater than $\delta(Ae) = \delta(A)+2$.

\smallskip

\textit{Step 6.\/} We have $B_i \leq_d E$. This follows from the the calculations in Step 4.

\smallskip

It follows that $A \leq_d E$, so we may assume that $E \leq_d M_f$. As $\delta(E) = \delta(B)$, we have $E = \cl^d(B)$. By Step 6, each $b_i$ is in  $\loc(b/A)$. By Step 5, we have that $A\perp e$ for $e \in D$. It follows that $\cl^d(A,\loc(b/A))$ contains $\{ e \in M_f: e \perp A\}$. 

\medskip

To conclude, we show that $\cl^d(A, \{e: e\perp A\}) = M_f$. Let $x \in M_f \setminus A$ and $X = \cl^d(x, A)$. Using the above construction we can find $V \in \C_f$ and distinct $b_1, \ldots, b_s, y \in V$ such that $y \in \cl^d(b_1,\ldots, b_s)$ and $y$ is not adjacent to any of the $b_i$. The latter implies that $yb_i \leq V$. Identify $y$ with $x$ and form the free amalgam $U$ of $V$ and $X$ over $x$. This is in $\C_f$ so we may assume $U \leq_d M_f$. Using that $xb_i \leq V$, it is straightforward to check that $b_i \perp A$, and so $x \in \cl^d(A, \{e: e\perp A\})$, as required. It follows that $M_f$ is monodimensional.

\end{example}

\subsection{Concluding remarks}

Hrushovski's paper \cite{Hrpp} uses a further variation on the construction method of the previous subsection to produce stable, $\aleph_0$-categorical structures which are not one-based. In this variation of the construction, the predimension is given by 
\[\delta(A) = \vert A \vert - \alpha\vert R[A]\vert\]
where $\alpha \in \R^{\geq 0}$ is \textit{irrational}. For certain $\alpha$ one defines a control function $f_\alpha : \R^{\geq 0} \to \R^{\geq 0}$ such that $\C_{f_\alpha}$ is a free amalgamation class and the Fra\"{\i}ss\'e limit $M_\alpha$ is stable and $\aleph_0$-categorical. The details of this can be found in (\cite{W0}, Example 5.3). Forking independence gives a stationary independence relation on $M_\alpha$ and it would be interesting to investigate simplicity (or otherwise) of $\Aut(M_\alpha)$ using Theorem \ref{thmTZ}.

In his paper \cite{L}, Lascar also proves a \textit{small index property} for countable, saturated almost strongly minimal structures and it would be interesting to know whether these methods can be used to prove that such a property also holds for the structures $M_0$ and $M_f$ (for good $f$) of Sections \ref{Sec41} and \ref{Sec52}. More specifically, we ask:
\begin{itemize}
\item Suppose $G$ is $\Aut(M_0)$ or $\Aut(M_f)$ and $H \leq G$ is of index less than $2^{\aleph_0}$ in $G$. Does there exist $A \in \X$ such that $H \geq G_A$?
\end{itemize}

In the case where $G = \Aut(M_0)$, it seems likely that Lascar's methods work, though we have not checked all of the details. For the case where $G = \Aut(M_f)$, the following problem is relevant:

\begin{itemize}
\item Suppose $A_i, B_i \leq_d M_f$ are finite and $h_i : A_i \to B_i$ is an isomorphism  (for $i = 1,\ldots, n$). Do there exist $D \in \X$ with $A_i , B_i \leq_d D$ and $g_i \in \Aut(D)$ such that $g_i \supseteq h_i$ for all $i \leq n$?
\end{itemize}

\end{document}